\providecommand{\U}[1]{\protect\rule{.1in}{.1in}}
\theoremstyle{plain}
\newtheorem{theo+}{Theorem}[section]
\newtheorem{prop+}[theo+]{Proposition}
\newtheorem{coro+}[theo+]{Corollary}
\newtheorem{lemm+} [theo+]{Lemma}
\newtheorem{deep+}  [theo+]  {Deep Result}
\newtheorem{fact+}  [theo+]  {Fact}
\theoremstyle{definition}
\newtheorem{exam+}  [theo+]  {Example}
\newtheorem{rema+}  [theo+]  {Remark}
\newtheorem{defi+}  [theo+]  {Definition}
\newtheorem{xca+}[theo+]{Exercise}
\newenvironment{theorem}{\begin{theo+}}{\end{theo+}}
\newenvironment{proposition}{\begin{prop+}}{\end{prop+}}
\newenvironment{corollary}{\begin{coro+}}{\end{coro+}}
\newenvironment{lemma}{\begin{lemm+}}{\end{lemm+}}
\newenvironment{remark}{\begin{rema+}}{\end{rema+}}
\newenvironment{definition}{\begin{defi+}}{\end{defi+}}
\numberwithin{equation}{section}
\def\Norm#1_#2{\Vert#1\Vert_{#2}}
\def\1{{\bf 1}}
\begin{document}
\title[Positivity of Shimura operators]{Positivity of Shimura operators}
\author{Siddhartha Sahi}
\email{\textit{siddhartha.sahi@gmail.com }}
\address{Department of Mathematics, Rutgers University, New Brunswick, NJ, USA }
\author{Genkai Zhang}
\email{\textit{genkai@chalmers.se}}
\address
{Mathematical Sciences, Chalmers University of Technology and Mathematical
Sciences, G\"oteborg University, SE-412 96 G\"oteborg, Sweden.}
\address{Korea Institute for Advanced Study, Seoul, Korea}

\thanks{Research by G. Zhang partially supported by the Swedish Science Council (VR)}

\begin{abstract}
In \cite{Shimura-1990} G. Shimura introduced a family of invariant
differential operators that play a key role in the study of nearly holomorphic
automorphic forms, and he asked for a determination of their \textquotedblleft
domain of positivity\textquotedblright. In this paper we relate the
eigenvalues of Shimura operators to certain polynomials introduced by A.
Okounkov, which leads to an explicit answer to Shimura's questions.

\end{abstract}
\maketitle

\section{Introduction}

In this paper we answer an old question of G. Shimura on the spectrum of
certain invariant differential operators on a Hermitian symmetric space. These
operators were first introduced by Shimura in \cite{Shimura-1990} for
classical Hermitian symmetric spaces, and they play a key role in his higher
rank generalization of the theory of nearly holomorphic automorphic forms. In
order to describe Shimura's question, and our answer, it is convenient to
introduce the following notation for \textquotedblleft
partitions\textquotedblright:
\begin{equation}
\Lambda=\left\{  \lambda\in\mathbb{Z}^{n}\mid\lambda_{1}\geq\lambda_{2}%
\geq\cdots\geq\lambda_{n}\geq0\right\}  \text{,\quad}\left\vert \lambda
\right\vert =\lambda_{1}+\cdots+\lambda_{n}. \label{=Lambda}%
\end{equation}
We will write $1^{j}$ for the partition $\left(  1,\ldots,1,0,\ldots,0\right)
$ with $j$ \textquotedblleft ones.\textquotedblright

Now suppose $G/K$ is an irreducible Hermitian symmetric space of rank $n$. Let
$\mathfrak{g}$ and $\mathfrak{k}$ denote the complexified Lie algebras of $G$
and $K$, and let
\[
\mathfrak{g}=\mathfrak{k}+\mathfrak{p=k}+\mathfrak{p}^{+}+\mathfrak{p}^{-}
\]
be the corresponding Cartan decomposition. Let $\mathfrak{t}$ be a Cartan
subalgebra of $\mathfrak{k}$, then $\mathfrak{t}$ is also a Cartan subalgebra
of $\mathfrak{g}$; furthermore there is a distinguished family of strongly
orthogonal roots for $\mathfrak{t}$ in $\mathfrak{p}^{+}$%
\[
\left\{  \gamma_{1},\ldots,\gamma_{n}\right\}  \subseteq\Delta\left(
\mathfrak{t,p}^{+}\right)
\]
called the Harish-Chandra roots.

Now $\mathfrak{p}^{+}$ and $\mathfrak{p}^{-}$ are abelian Lie algebras, which
are contragredient as $K$-modules. Let $W_{\lambda} $ be the $K$-module with
highest weight $\sum_{i}\lambda_{i}\gamma_{i}$ and let $W_{\lambda}^*$
be its
contragradient, then by a result of Schmid we have $K$-module isomorphisms
\[
U\left(  \mathfrak{p}^{+}\right)  \approx S\left(  \mathfrak{p}^{+}\right)
\approx\oplus_{\lambda\in\Lambda}W_{\lambda},\quad U\left(  \mathfrak{p}%
^{-}\right)  \approx S\left(  p^{-}\right)  \approx\oplus_{\lambda\in\Lambda
}W_{\lambda}^{\ast}
\]
Let $u_{\lambda}$ denote the image of $1\in End\left(  W_{\lambda}\right)  $
under the sequence of maps
\[
End\left(  W_{\lambda}\right)  \approx W_{\lambda}^{\ast}\otimes W_{\lambda
}\hookrightarrow U\left(  \mathfrak{p}^{-}\right)  \otimes U\left(
\mathfrak{p}^{+}\right)  \overset{mult}{\longrightarrow}U\left(
\mathfrak{g}\right).
\]
Then $u_{\lambda}$ belongs to $U\left(  \mathfrak{g}\right)  ^{K}$ and
its right action on $G$ descends to
an operator $\mathcal{L}_{\lambda}\in\mathbf{D}\left(  G/K\right)  $. In fact $\left\{  \mathcal{L}_{\lambda}:\lambda\in
\Lambda\right\}  $ is a linear basis and $\left\{  \mathcal{L}_{1^{j}%
}:j=1,\ldots,n\right\}  $ is an independent generating set for $\mathbf{D}%
\left(  G/K\right)  $. These are the Shimura operators.

The algebra $\mathbf{D}\left(  G/K\right)  $ is commutative and its
eigenfunctions are the spherical functions $\Phi_{x}$. These are parametrized
by the set $\mathfrak{a}^{\ast}/W_{0}$, where $\mathfrak{a\subseteq p}$ is a
Cartan subspace and $W_{0}=W\left(  \mathfrak{a,g}\right)  $ is the restricted
Weyl group. More precisely the parameter $x\in\mathfrak{a}^{\ast}$ defines an
irreducible spherical subquotient $J\left(  x\right)  $ of a minimal principal
series representation of $G$ and $\Phi_{x}$ is its spherical matrix
coefficient. We write $\mathcal{U}$ for the set of spherical unitary
parameters%
\[
\mathcal{U}=\left\{  x\in\mathfrak{a}^{\ast}\mid J\left(  x\right)  \text{ is
unitarizable}\right\}  .
\]
The determination of $\mathcal{U}$ is an important problem, which is as yet
unsolved in complete generality. In this connection, the Shimura operators
have the following positivity property (see Proposition 5.1 below). Let
$c_{\lambda,x}$ denote the eigenvalue of the modified operator $\mathcal{L}%
_{\lambda}^{\prime}=\left(  -1\right)  ^{\left\vert \lambda\right\vert
}\mathcal{L}_{\lambda}$ on $\Phi_{x}$, then we have
\begin{equation}
c_{\lambda,x}\geq0\text{ for all }x\in\mathcal{U}. \label{c-pos}%
\end{equation}

Motivated in part by (\ref{c-pos}), Shimura asked for a determination of the
sets
\begin{align}
\mathcal{A}  &  =\left\{  x:c_{\lambda,x}\geq0\text{ for all }\lambda\right\}
\label{=A}\\
\mathcal{G}  &  =\left\{  x:c_{1^{j},x}\geq0\text{ for all }j\right\}
\label{=G}%
\end{align}
Evidently we have $\mathcal{U}\subseteq\mathcal{G}\subseteq\mathcal{A}$ but these
sets are quite different in general. In this paper we give an explicit formula
for $c_{\lambda,x}$, thereby answering Shimura's question. To describe our
answer we need some further notation. First by a classical result of
Harish-Chandra we have
\[
D\Phi_{x}=\eta_{D}\left(  x\right)  \Phi_{x}%
\]
where $\eta_{D}=\eta\left(  D\right)  $ is the image of $D$ under the
Harish-Chandra homomorphism%
\begin{equation}
\eta:\mathbf{D}\left(  G/K\right)  \rightarrow S\left(  \mathfrak{a}\right)
^{W_{0}}\approx P\left(  \mathfrak{a}^{\ast}\right)  ^{W_{0}}\text{.}
\label{=HC-hom}%
\end{equation}
Thus we have $c_{\lambda,x}=c_{\lambda}\left(  x\right)  $, where
\[
c_{\lambda}=\eta\left(  \mathcal{L}_{\lambda}^{\prime}\right)  =\left(
-1\right)  ^{\left\vert \lambda\right\vert }\eta\left(  \mathcal{L}_{\lambda
}\right)
\]
and so the determination of the sets (\ref{=A}, \ref{=G}) reduces to the
determination of $\eta\left(  \mathcal{L}_{\lambda}\right)  $.

Our first result relates the Shimura operators to certain polynomials
$P_{\lambda}\left(  x;\tau,\alpha\right)  $. These polynomials, denoted
$P_{\lambda}^{ip}\left(  x;\tau,\alpha\right)  $ in \cite{Koor}, are the
$q\rightarrow1$ limits of a family of poynomials introduced by A. Okounkov
\cite[Definition 1.1]{Okounkov}, and they generalize an analogous family
defined by one of the authors and studied together with F. Knop
\cite{Knop-Sahi}. To describe the $P_{\lambda}$ it is convenient to define
$\delta=\left(  n-1,\ldots,1,0\right)  $ and to set
\begin{equation}
\rho=\rho_{\tau,\alpha}=\left(  \rho_{1},\ldots,\rho_{n}\right)  ,\quad
\rho_{i}=\tau\delta_{i}+\alpha=\tau\left(  n-i\right)  +\alpha. \label{=rta}%
\end{equation}
The polynomial $P_{\lambda}\left(  x;\tau,\alpha\right)  $ has total degree
$2\left\vert \lambda\right\vert $ in $x_{1},\ldots,x_{n}$, its coefficients
are rational functions in two parameters $\tau$ and $\alpha$, it is even and
symmetric, i.e. invariant under all permutations and sign changes of the
$x_{i}$, and among all such polynomials it is characterized up to scalar
multiple by its vanishing at points of the form%
\[
\left\{  \mu+\rho:\mu\in\Lambda,\left\vert \mu\right\vert \leq\left\vert
\lambda\right\vert ,\mu\neq\lambda\right\}.
\]
For generic $\tau$ the set $\left\{  P_{\lambda};\lambda\in\Lambda\right\}  $
is a linear basis of the space $\mathcal{Q}$ of even symmetric polynomials.

Now suppose $G/K$ is a Hermitian symmetric space as before. Then the
restricted root system $\Sigma\left(  \mathfrak{a},\mathfrak{g}\right)  $ is
of type $BC_{n}$, with (potentially) three root lengths, and we fix a choice
of positive roots. The positive long roots have multiplicity $1$ and
constitute a basis of $\mathfrak{a}^{\ast}$, thus we may use them to identify
$\mathfrak{a}^{\ast}$ with $\mathbb{C}^{n}$. This identifies $W_{0}$ with the
group of all permutation and sign changes, and $P\left(  \mathfrak{a}^{\ast
}\right)  ^{W_{0}}$ with the algebra $\mathcal{Q}$. Moreover if we denote the
multiplicities of short and medium roots by $2b$ and $d$ respectively, then
the half-sum of positive roots $\Sigma\left(  \mathfrak{a},\mathfrak{g}%
\right)  $ is given by $\rho=\rho_{\tau,\alpha}$ as in (\ref{=rta}) where%
\begin{equation}
\tau=d/2,\quad\alpha=\left(  b+1\right)  /2. \label{=tau-alpha}%
\end{equation}

\begin{theorem}
Let $G/K$ be a Hermitian symmetric space with $\tau,\alpha$ as in
(\ref{=tau-alpha}), then
\[
\eta\left(  \mathcal{L}_{\lambda}\right)  =k_{\lambda}P_{\lambda}\left(
x;\tau,\alpha\right)\]
where $k_{\lambda}=k_{\lambda}\left(  \tau,\alpha\right)  $ is an explicit
\emph{positive} constant described in (\ref{=k-mu}).
\end{theorem}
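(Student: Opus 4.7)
My plan is to match $\eta(\mathcal{L}_\lambda)$ to $P_\lambda$ using the vanishing characterization recalled just above the theorem: among even symmetric polynomials of degree $\leq 2|\lambda|$ in $x_1,\ldots,x_n$, $P_\lambda(x;\tau,\alpha)$ is determined up to scalar by vanishing at $\mu+\rho$ for $\mu\in\Lambda$, $|\mu|\leq|\lambda|$, $\mu\neq\lambda$.

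\emph{Step 1: degree and symmetry.} Writing $u_\lambda=\sum_i w_i^{*}w_i$ with $\{w_i\}$ a basis of $W_\lambda\subset U(\mathfrak{p}^{+})$ and $\{w_i^{*}\}$ the dual basis in $W_\lambda^{*}\subset U(\mathfrak{p}^{-})$, the generator $u_\lambda$ has total degree $2|\lambda|$ in $U(\mathfrak{g})$, so $\mathcal{L}_\lambda$ has order $2|\lambda|$. The Harish-Chandra homomorphism $\eta$ is filtration-preserving, so $\eta(\mathcal{L}_\lambda)\in P(\mathfrak{a}^{*})^{W_0}\cong\mathcal{Q}$ is an even symmetric polynomial of degree $\leq 2|\lambda|$.

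\emph{Step 2: vanishing at the interpolation nodes.} This is the crux. I would identify each point $\mu+\rho$ with $\mu\in\Lambda$ with the spherical Harish-Chandra parameter of an explicit (possibly degenerate) scalar holomorphic / generalized Verma module $N(\mu)$ realized in polynomials on $\mathfrak{p}^{-}$, whose spherical vector is a distinguished power of the Bergman kernel and whose $K$-type decomposition, by Schmid, contains precisely the $W_\nu$ with $\nu\subseteq\mu$. In this realization $u_\lambda=\sum_i w_i^{*}w_i$ acts on the spherical vector by applying the constant-coefficient differential operator $w_i\in W_\lambda\subset S(\mathfrak{p}^{+})=\mathbb{C}[\mathfrak{p}^{-}]$, then multiplying by $w_i^{*}$, then projecting back to the spherical $K$-type. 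For $|\mu|<|\lambda|$ the spherical vector has no $W_\lambda$-isotypic contribution of the required degree and the sum vanishes; for $|\mu|=|\lambda|$, $\mu\neq\lambda$, Schur orthogonality of the $K$-isotypic components $W_\mu$ and $W_\lambda$ inside $S(\mathfrak{p}^{+})$ kills the pairing. This gives $\eta(\mathcal{L}_\lambda)(\mu+\rho)=0$ for all required $\mu$.

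\emph{Step 3: normalization.} Steps 1 and 2 together with the quoted characterization force $\eta(\mathcal{L}_\lambda)=k_\lambda P_\lambda$ for some scalar $k_\lambda=k_\lambda(\tau,\alpha)$. To extract $k_\lambda$, one evaluates both sides at $x=\lambda+\rho$: the left-hand side is the eigenvalue of $\mathcal{L}_\lambda$ on the spherical vector of $N(\lambda)$, which a direct computation via the same factorization $u_\lambda=\sum w_i^{*}w_i$ expresses as a product over the boxes of $\lambda$ of explicit linear factors in $\tau,\alpha$; the right-hand side is Okounkov's known value $P_\lambda(\lambda+\rho;\tau,\alpha)$. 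The ratio is the constant $k_\lambda$ displayed in (\ref{=k-mu}). Positivity of $k_\lambda$ is immediate from the explicit form: for $\tau=d/2$, $\alpha=(b+1)/2$ every factor in both numerator and denominator is manifestly positive.

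\emph{Main obstacle.} The delicate point is Step 2 — turning the informal ``differentiate, multiply back, project'' computation into a rigorous identity. The cleanest route is to invoke the $BC_n$ Capelli/interpolation identity of Knop-Sahi \cite{Knop-Sahi} (and its $q\to 1$ Okounkov refinement \cite{Okounkov}), which identifies the scalar by which $u_\lambda$ acts on the $\mu$-th scalar holomorphic module directly with the shifted-symmetric polynomial $P_\lambda$ evaluated at $\mu+\rho$, giving the required vanishing at once. An alternative is to work on the compact dual and combine Schmid's decomposition with a $c$-function normalization; the two approaches agree by analytic continuation in the inducing parameter.
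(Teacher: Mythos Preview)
Your overall architecture---degree bound, vanishing at the interpolation nodes, then normalization---is exactly the paper's, but Step~2 as written has a genuine gap, and Step~3 differs from the paper's actual computation.

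\textbf{Step 2.} The modules you describe do not exist with the stated properties. A scalar generalized Verma module realized on $S(\mathfrak{p}^{-})$ has \emph{all} $K$-types $W_\nu^{*}$, $\nu\in\Lambda$, not just those with $\nu\subseteq\mu$; and its lowest $K$-type is one-dimensional but not $K$-spherical (unless the inducing character is trivial), so there is no spherical vector to act on. What you need instead is the finite-dimensional $K$-spherical representation $V_{\mu}$ furnished by the Cartan--Helgason theorem: its restricted Harish-Chandra parameter is $\mu+\rho$, and a short highest-weight argument (Lemma~\ref{lemma3.1}) shows that any $K$-type $W_\nu$ occurring in $V_\mu$ satisfies $\nu\subseteq\mu$. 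The paper then computes the eigenvalue as $\sum_\alpha\|\pi_\mu(\xi_\alpha)v\|^{2}$ with $v$ the spherical vector and $\xi_\alpha$ a basis of $W_\lambda$; each summand lies in the $W_\lambda$-isotypic component, which vanishes unless $\lambda\subseteq\mu$. This is your ``differentiate, multiply back, project'' idea made rigorous, but on the correct module. Your alternative (``work on the compact dual'') is in fact the paper's route; your suggestion to invoke a Knop--Sahi/Okounkov Capelli identity is circular, since that identity for Hermitian symmetric spaces is essentially the theorem being proved.

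\textbf{Step 3.} The paper does \emph{not} evaluate at $\lambda+\rho$. It compares top-degree homogeneous parts: the leading term of $\eta(\mathcal{L}_\lambda)$ is the Fock-space reproducing kernel $K_\lambda\bigl(\sum x_j e_j^{+},\sum x_j e_j^{+}\bigr)$, while the leading term of $P_\lambda$ is the monic Jack polynomial $P_\lambda^{\mathrm{Jac}}(x_1^{2},\ldots,x_n^{2})$. Summing the former over $|\lambda|=m$ gives $\frac{1}{m!}(x_1^{2}+\cdots+x_n^{2})^{m}$, and the known expansion of this power in Jack polynomials yields $k_\lambda$ as in~(\ref{=k-mu}). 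Your evaluation-at-$\lambda+\rho$ approach could also work---both $P_\lambda(\lambda+\rho)$ and the eigenvalue on $V_\lambda$ admit product formulas---but you have not carried out either computation, and matching them is not quite as ``direct'' as you suggest.
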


This is proved more generally for line bundles on $G/K$ in Theorem \ref{thm1}
below. In view of this, we introduce the signed versions of the Okounkov
polynomials
\begin{equation}
q_{\lambda}\left(  x\right) : =q_{\lambda}\left(  x;\tau,\alpha\right)
=\left(  -1\right)  ^{\left\vert \lambda\right\vert }P_{\lambda}\left(
x;\tau,\alpha\right). \label{=q}%
\end{equation}

\begin{corollary}
The Shimura sets are given explicitly as follows:%
\begin{align*}
\mathcal{A}  &  =\left\{  x:q_{\lambda}\left(  x\right)  \geq0\text{ for all
}\lambda\right\}  ,\\
\mathcal{G}  &  =\left\{  x:q_{1^{j}}\left(  x\right)  \geq0\text{ for all
}j\right\}.
\end{align*}

\end{corollary}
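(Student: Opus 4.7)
The plan is to derive the corollary as an immediate consequence of the Theorem together with the sign conventions in the definitions of $c_\lambda$ and $q_\lambda$. The Theorem gives
\[
\eta(\mathcal{L}_\lambda)(x) = k_\lambda\, P_\lambda(x;\tau,\alpha),
\]
with $k_\lambda = k_\lambda(\tau,\alpha)>0$. Combining this with the definition $c_\lambda = \eta(\mathcal{L}'_\lambda) = (-1)^{|\lambda|}\eta(\mathcal{L}_\lambda)$ and with $q_\lambda(x) = (-1)^{|\lambda|} P_\lambda(x;\tau,\alpha)$, I would obtain the identity
\[
c_{\lambda,x} = (-1)^{|\lambda|} k_\lambda\, P_\lambda(x;\tau,\alpha) = k_\lambda\, q_\lambda(x).
\]

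The key step is to observe that since $k_\lambda>0$, the sign of $c_{\lambda,x}$ agrees with the sign of $q_\lambda(x)$ for every $\lambda$. In particular, $c_{\lambda,x}\geq 0$ if and only if $q_\lambda(x)\geq 0$, and similarly $c_{1^j,x}\geq 0$ if and only if $q_{1^j}(x)\geq 0$. Inserting these equivalences into the defining conditions (\ref{=A}) and (\ref{=G}) of $\mathcal{A}$ and $\mathcal{G}$ then yields the claimed descriptions.

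There is no genuine obstacle beyond the Theorem itself; the content of the corollary is precisely that the Theorem converts the abstract positivity conditions defining $\mathcal{A}$ and $\mathcal{G}$ into explicit polynomial inequalities in the Okounkov polynomials. The only point worth flagging in the write-up is that one must cite the positivity $k_\lambda>0$ (from the explicit expression \eqref{=k-mu} referenced in the Theorem) to pass between the signs of $c_{\lambda,x}$ and $q_\lambda(x)$; without this positivity one could only conclude an equality of zero-loci rather than an equality of positivity sets.
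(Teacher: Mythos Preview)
Your proposal is correct and matches the paper's approach exactly: the paper states that the corollary follows directly from Theorem~\ref{thm1} together with the definitions of $\mathcal{A}$ and $\mathcal{G}$, which is precisely the argument you give. Your explicit mention of the positivity $k_\lambda>0$ is a useful detail that the paper leaves implicit.
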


This is  Corollary \ref{cor1-2} below.

Since one has explicit formula for $P_{\lambda}$, and hence $q_{\lambda}$,
recalled in (\ref{koor-fo}) below, this gives a complete characterization of
the Shimura sets. In particular, we obtain the following explicit description
of $\mathcal{G}$. If $I$ is a subset of $\left\{  1,\ldots,n\right\}  $ with
$j$ elements $i_{1}<\cdots<i_{j}$, then we define
\[
\varphi_{I}\left(  x\right)  =\prod\nolimits_{k=1}^{j}\left[  \left(
\rho_{i_{k}+j-k}\right)  ^{2}-x_{i_{k}}^{2}\right]  ,\quad\varphi_{j}\left(
x\right)  =\sum\nolimits_{\left\vert I\right\vert =j}\varphi_{I}\left(
x\right).
\]

\begin{theorem}
We have $q_{1^{j}}=\varphi_{j}$ for all $j$, and hence%
\[
\mathcal{G}=\left\{  x:\varphi_{j}\left(  x\right)  \geq0\text{ for all
}j\right\}.
\]

\end{theorem}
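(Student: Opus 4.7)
The plan is to identify $\varphi_j$ with $q_{1^j}$ by verifying that $\varphi_j$ satisfies the characterization of the interpolation polynomial $P_{1^j}(x;\tau,\alpha)$ recalled above: namely, $P_{1^j}$ is the unique (up to scalar) even symmetric polynomial of total degree $2j$ that vanishes on
\[
\{\mu + \rho : \mu \in \Lambda,\ |\mu| \leq j,\ \mu \neq 1^j\}.
\]
So the steps are (i) check that $\varphi_j$ is even symmetric of degree $2j$, (ii) check that $\varphi_j$ vanishes on the above set, and (iii) pin down the scalar by comparing top-degree parts with $(-1)^j P_{1^j}$. The description of $\mathcal{G}$ then follows immediately from the corollary of the introduction.

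Step (i) is immediate from the explicit formula: each $\varphi_I$ is a product of polynomials even in the variables it uses, and symmetrizing over all $j$-subsets $I$ produces a $W_0$-invariant polynomial of total degree $2j$.

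Step (ii) is the crux, and rests on a single clean combinatorial observation. If $\mu\in\Lambda$ with $|\mu|\le j$ and $\mu\neq 1^j$, then $\mu$ has at most $j-1$ nonzero parts: otherwise it has $\ge j$ nonzero parts each $\ge 1$, so $|\mu|\ge j$, forcing $|\mu|=j$ and every nonzero part equal to $1$, i.e.\ $\mu=1^j$. Now for any $j$-subset $I=\{i_1<\cdots<i_j\}$ of $\{1,\ldots,n\}$ we have $i_j\ge j$, so $i_j$ exceeds the length of $\mu$ and therefore $\mu_{i_j}=0$. Consequently the $k=j$ factor of $\varphi_I(\mu+\rho)$ equals $\rho_{i_j}^2-(\mu_{i_j}+\rho_{i_j})^2=0$, killing the entire term. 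Summing over $I$ gives $\varphi_j(\mu+\rho)=0$.

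For step (iii), the top-degree part of $\varphi_j$ is obtained by selecting the $-x_{i_k}^2$ term from each factor of each $\varphi_I$, giving $(-1)^j\sum_{|I|=j}\prod_{k=1}^j x_{i_k}^2=(-1)^j e_j(x_1^2,\ldots,x_n^2)$. Since $P_\lambda$ is normalized so that its leading homogeneous part is the BC Jack polynomial attached to $\lambda$—and for the column shape $\lambda=1^j$ this Jack polynomial is $e_j(x_1^2,\ldots,x_n^2)$ independently of $\tau$—the leading parts of $\varphi_j$ and $(-1)^j P_{1^j}$ agree. Combined with the shared vanishing, the uniqueness characterization forces $\varphi_j=(-1)^j P_{1^j}=q_{1^j}$. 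The main obstacle, to the extent there is one, is tracking the normalization conventions for $P_\lambda$ used by Koornwinder and Okounkov; this is read off from the explicit formula (\ref{koor-fo}) cited in the excerpt. Once that is fixed, the proof is dominated by the one-line combinatorial observation in step (ii).
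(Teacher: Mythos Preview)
Your overall strategy is the same as the paper's: verify that $(-1)^j\varphi_j$ satisfies the three characterizing conditions of Theorem~\ref{char-oko} for $P_{1^j}$. Your step~(ii) is correct and is exactly the observation the paper uses in proving Theorem~\ref{egg} (namely that $|\mu|\le j$, $\mu\ne 1^j$ forces $\mu_j=\cdots=\mu_n=0$), and step~(iii) is fine.

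There is, however, a genuine gap in step~(i). The claim that ``symmetrizing over all $j$-subsets $I$ produces a $W_0$-invariant polynomial'' is not a proof, and in fact the symmetry is \emph{not} immediate from the formula. The term $\varphi_I$ is not merely a function of the variables $\{x_{i_k}:k\le j\}$: the constants $\rho_{i_k+j-k}$ depend on the actual positions $i_1<\cdots<i_j$ inside $\{1,\dots,n\}$, so a permutation of the $x_i$ does \emph{not} permute the summands $\varphi_I$ among themselves. (Evenness in each variable is of course clear.) The symmetry of $\varphi_j$ is true but requires work. The paper supplies it by first showing (Theorem~\ref{egg}) that $P_{1^j}$ equals the coefficient of $t^j$ in
\[
\frac{\prod_{i=1}^{n}(1+tx_i^2)}{\prod_{i=j}^{n}(1+t\rho_i^2)},
\]
which is manifestly symmetric in $x$, and only afterwards expanding this to obtain the explicit expression $\varphi_j$ (Corollary~\ref{egg-cor}). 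If you wish to keep your direct route you must either prove the $S_n$-invariance of $\varphi_j$ independently or, equivalently, check that $\varphi_j$ agrees with that $t^j$-coefficient; once you do so, the rest of your argument goes through unchanged.
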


This is Theorem \ref{G-set} below.

The description of $\mathcal{A}$ involves infinitely many polynomial
inequalities $q_{\lambda}\geq0$, and it is natural to ask whether
$\mathcal{A}$ can in fact be described by a \emph{finite} set of inequalities.
While we do not know the answer to this question in general, we show below
that this is indeed the case for the real points in $\mathcal{A}$ for the rank
$2$ groups $U\left(  m,2\right)  $. However for $m>2$ the characterization
involves \emph{non-polynomials} in an essential way. This is in sharp contrast
with the unitary parameter set $\mathcal{U}$, whose description involves only
\emph{linear} functions (see Remark 5.9 below).

We give two independent derivations of this result. The first depends on a
formula for $P_{\lambda}\left(  x;\tau,\alpha\right)  $ for $n=2$ as a
hypergeometric polynomial \cite{Koor}. By symmetry it suffices to describe the
sets
\[
\mathcal{A}_{0}=\mathcal{A\cap C}\text{,}\quad\mathcal{G}_{0}=G\cap
\mathcal{C}\text{,\quad}\mathcal{C}=\left\{  x\in\mathbb{R}^{n}:x_{1}%
\geq\cdots\geq x_{n}\geq0\right\}.
\]
For $G=$ $U\left(  m,2\right)  $ we have
\[
\left(  \rho_{1},\rho_{2}\right)  =\left(  \alpha+\tau,\alpha\right)  =\left(
\dfrac{m+1}{2},\dfrac{m-1}{2}\right)
\]
and we introduce two triangular regions as below%
\begin{align*}
T_{1}  &  =\left\{  x\mid\rho_{2}\geq x_{1}\geq x_{2}\geq0\right\}  ,\\
T_{2}  &  =\left\{  x\mid x_{1}\geq x_{2}\geq\rho_{2},\quad x_{1}+x_{2}%
\leq\rho_{1}+\rho_{2}\right\}.
\end{align*}
We also write $\left(  a\right)_{k}=a\left(  a+1\right)  \cdots\left(
a+k-1\right)  $ for the Pochammer symbol of product
of increasing factors and 
$\left(  a\right)_{k}^-=a\left(  a-1\right)  \cdots\left(
a-k+1\right)$ for the decreasing factors,
 and define %
\[
R\left(  x_1, x_2\right)  =\sum_{k=0}^{\infty}\frac{\left(  \rho_{2}%
+x_{2}\right)  _{k}\left(  \rho_{2}-x_{2}\right)  _{k}}{\left(  \rho_{1}%
+x_{1}\right)  _{k}\left(  \rho_{1}-x_{1}\right)  _{k}}.
\]

\begin{theorem}
\label{Th:Um2}For $G=U(m,2)$ we have

\begin{enumerate}
\item $\mathcal{G}_{0}=T_{1}\cup T_{2}$.
\item $\mathcal{A}_{0}=T_{1}\cup W$ where $W=\left\{  x\in T_{2}\mid R\left(
x\right)  \geq0\right\} $.
\end{enumerate}
\end{theorem}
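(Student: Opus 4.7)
The plan for part (1) is to read off $\mathcal{G}_0$ from Theorem \ref{G-set}. In rank $n=2$ the formulas specialize to
\[
\varphi_1(x) = (\rho_1^2 - x_1^2) + (\rho_2^2 - x_2^2), \qquad \varphi_2(x) = (\rho_2^2 - x_1^2)(\rho_2^2 - x_2^2).
\]
Restricted to $\mathcal{C}$, the inequality $\varphi_2 \geq 0$ splits into two subcases: either $x_1 \leq \rho_2$, in which case also $x_2 \leq x_1 \leq \rho_2 \leq \rho_1$ and $\varphi_1 \geq 0$ is automatic, recovering $T_1$; or $x_2 \geq \rho_2$, in which case combining with $\varphi_1 \geq 0$ on the wedge $x_1 \geq x_2 \geq \rho_2$ produces $T_2$. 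This yields $\mathcal{G}_0 = T_1 \cup T_2$.

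For part (2) I would use Koornwinder's explicit formula (\ref{koor-fo}) expressing $P_\lambda(x;\tau,\alpha)$ in rank two as a product of elementary factors times a terminating hypergeometric polynomial in $x_1^2$ and $x_2^2$. The inclusion $T_1 \subseteq \mathcal{A}_0$ then follows directly: on $T_1$ every factor appearing has the form $\rho_i^2 - x_j^2$ with $\rho_i \geq \rho_2 \geq |x_j|$ and is hence nonnegative, so reading the sign off Koornwinder's formula shows $q_\lambda(x) \geq 0$ for every $\lambda$. Since $\mathcal{A}_0 \subseteq \mathcal{G}_0 = T_1 \cup T_2$, the remaining task is to describe $\mathcal{A}_0 \cap T_2$. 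On $T_2$ I would aim to rewrite $q_\lambda$ as a product of a nonnegative prefactor and a partial sum of the series defining $R(x)$ --- concretely, an identity
\[
q_{(\lambda_1,\lambda_2)}(x) = K_\lambda(x) \sum_{k=0}^{N_\lambda} a_k(\lambda)\,\frac{(\rho_2 + x_2)_k (\rho_2 - x_2)_k}{(\rho_1 + x_1)_k (\rho_1 - x_1)_k}
\]
with $K_\lambda(x) \geq 0$ on $T_2$, positive weights $a_k(\lambda) > 0$, and $N_\lambda \to \infty$ as $|\lambda| \to \infty$. Nonnegativity of all $q_\lambda$ on $T_2$ then translates into nonnegativity of every sufficiently long weighted partial sum of $R(x)$; a standard Abel-summation argument, exploiting the definite sign pattern of the Pochhammer terms on $T_2$, collapses this to the single inequality $R(x) \geq 0$. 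Combined with the $T_1$ case, this yields $\mathcal{A}_0 = T_1 \cup W$.

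The main obstacle I anticipate is the hypergeometric identity just described --- identifying both the nonnegative prefactor $K_\lambda(x)$ and the weights $a_k(\lambda)$ explicitly from Koornwinder's formula --- together with the sign bookkeeping for the Pochhammer numerators and denominators on $T_2$ that justifies passing from infinitely many polynomial inequalities to the single transcendental one involving $R$. This reduction is a rank-two hypergeometric identity specific to the parameter pair $(\tau,\alpha) = (1, (m-1)/2)$, and is the technical heart of the theorem; by contrast, a sanity check in simple subregions (e.g.\ along diagonals $x_1 = x_2$, or on the boundary $x_1 + x_2 = \rho_1 + \rho_2$) is straightforward and gives a useful consistency test for the weights $a_k(\lambda)$.
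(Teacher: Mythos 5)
Your part (1) has a genuine gap at its final step. In the chamber $\mathcal{C}$ the two conditions are $\varphi_2(x)=(\rho_2^2-x_1^2)(\rho_2^2-x_2^2)\geq 0$ and $\varphi_1(x)=\rho_1^2+\rho_2^2-x_1^2-x_2^2\geq 0$; the second is a \emph{disc} condition, and intersecting it with the wedge $x_1\geq x_2\geq\rho_2$ yields the circular sector $\{x_1\geq x_2\geq\rho_2,\ x_1^2+x_2^2\leq\rho_1^2+\rho_2^2\}$, not the triangle $T_2=\{x_1\geq x_2\geq\rho_2,\ x_1+x_2\leq\rho_1+\rho_2\}$. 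These differ whenever $\rho_1\neq\rho_2$: for $m=2$ (so $\rho=(3/2,1/2)$) the point $(1.2,1.0)$ satisfies $\varphi_1\geq0$ and $\varphi_2\geq0$ but has $x_1+x_2=2.2>2$. So the assertion that the combination ``produces $T_2$'' does not follow; what it produces is the sector, exactly as in Theorem \ref{Th:Gm} and the first paragraph of the proof of Theorem \ref{thm4.4}, where $\mathcal{G}_0=T_1\cup V$ with $V=\{x\in[\rho_2,\rho_1]^2\cap\mathcal{C}\mid q_{1,0}(x)\geq0\}$. (This also exposes a mismatch between the introduction's formulation of part (1) and what the body of the paper actually establishes; the triangle only reappears in the description of $\mathcal{A}_0$ for $m=2$ via Theorem \ref{pos-dom}.)

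For part (2) your strategy is essentially the paper's, but the ``technical heart'' you defer is precisely what the paper supplies, and it is simpler than the Abel-summation scheme you propose. For $U(m,2)$ one has $d=2$, so in Koornwinder's formula (Lemma \ref{p-f-r=2}) the weight $(\tfrac{d}{2})_k/k!$ is identically $1$: $q_{(m_1,m_2)}$ is a manifestly nonnegative prefactor times the \emph{unweighted} partial sum $F^{[m_1-m_2]}$ of the very series defining $R$. On the square $[\rho_2,\rho_1]^2$ every term of that series after the leading $1$ is nonpositive, so the partial sums decrease to $R$; hence $R\geq0$ forces all partial sums to be nonnegative (giving $\mathcal{B}\subseteq\mathcal{A}_0$), while $q_{(m,0)}\geq0$ for all $m$ forces $R\geq0$ by passing to the limit through truncations (giving $\mathcal{A}_0\subseteq\mathcal{B}$); the cases $m_2>0$ are handled term by term. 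No weights $a_k(\lambda)$ or Abel summation are needed, but without identifying this $d=2$ collapse your argument is not yet a proof. Your $T_1\subseteq\mathcal{A}_0$ step is fine and is the paper's Theorem \ref{square}. Note finally that showing $W=T_2$ when $m=2$ requires evaluating $R$ in closed form via Watson's theorem (Lemma \ref{aar-lemma}), a step your sketch does not address, though the theorem as stated only asks for $W$ in terms of $R$.
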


This is proved in Theorems \ref{thm4.4} and \ref{pos-dom}.

In section \ref{sec-Alt} we give an alternative description of $W$ in terms of
the functions
\[
s\left(  t\right)  =s_{m}\left(  t\right)  =\frac{\sin\pi t}{\left(
t+1\right)  _{m}}, \, 
S\left(x_1, x_2\right)  =
\frac{
s\left(  x_{1}\right)  -s\left(  x_{2}\right)
}{x_{1}-x_{2}}.
\]

\begin{theorem}
\label{W2}The set $W$ of Theorem \ref{Th:Um2} can also be described as
follows:%
\[
W=\left\{  x\in T_{2}\mid
S(x_1-\alpha, x_2-\alpha)\geq0\right\}.
\]

\end{theorem}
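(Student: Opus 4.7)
The plan is to establish an explicit identity
\[
R(x_1, x_2) = F(y_1, y_2) \cdot S(y_1, y_2),
\]
where $y_i = x_i - \alpha$ and $F$ is strictly positive on the interior of $T_2$; the sign equivalence $R \geq 0 \iff S \geq 0$ then follows at once, with boundary cases handled by continuity.

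First, I would recast $R$ via a Thomae transformation. The defining series is the balanced hypergeometric ${}_3F_2(1, \rho_2+x_2, \rho_2-x_2;\ \rho_1+x_1, \rho_1-x_1;\ 1)$, balanced because $\rho_1 - \rho_2 = \tau = 1$ for $G = U(m,2)$. Applying Thomae's identity and then the standard integral formula for a ${}_3F_2$ with denominator parameter $2$ converts this to
\[
R = (2\alpha + y_1) \int_0^1 {}_2F_1\bigl(1 - y_1 - 2\alpha - y_2,\ 1 - y_1 + y_2;\ 1 - y_1;\ t\bigr)\, dt.
\]

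Next, I would evaluate the integral by substituting $t = 1 - u$ and applying the linear connection formula for ${}_2F_1$ at $u = 0$, which yields two power series in $u$ (one regular, one carrying the factor $u^{2\alpha+y_1-1}$). Integrating term by term and applying Gauss' summation to each resulting ${}_2F_1(\cdots;1)$ gives a closed-form expression for $R$ in terms of Gamma-function ratios evaluated at shifts of $y_1, y_2, 2\alpha$. Then, using $\Gamma(z)\Gamma(1-z) = \pi/\sin(\pi z)$ together with $(y+1)_m = \Gamma(y+m+1)/\Gamma(y+1)$ to repackage these ratios, the factor $\sin(\pi y)/(y+1)_m = s(y)$ appears naturally, once at $y = y_1$ and once at $y = y_2$; the antisymmetric combination reassembles into the divided difference $[s(y_1)-s(y_2)]/(y_1-y_2) = S(y_1,y_2)$, multiplied by an explicit factor $F(y_1, y_2)$ that collects $(2\alpha + y_1)$, the Jacobian $(y_1 - y_2)$, and Gamma-function ratios.

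Finally, positivity of $F$ on the interior of $T_2$ is routine: on $T_2$ we have $0 \leq y_2 \leq y_1 \leq 1$ with $y_1 + y_2 \leq 1$, so each Gamma argument entering $F$ (such as $2\alpha+y_1$, $1-y_1$, $2\alpha+y_2$, $1-y_2$) is strictly positive, and $(y_1-y_2) \geq 0$. Boundary cases are absorbed by continuity of both $R$ and $S$.

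The hard part will be matching the two branches of the connection formula so that exactly the divided-difference structure of $S$ emerges. The $(1-t)^{c-a-b}$ singularity in the connection formula produces terms that must pair up with Gamma-function poles coming from the parameters $1-y_1-2\alpha-y_2$ and $-y_2$, and only after invoking $\sin(\pi(y+m)) = (-1)^m \sin(\pi y)$ and the Euler reflection judiciously do the signs and prefactors align into the clean numerator $s(y_1) - s(y_2)$.
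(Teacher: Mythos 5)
Your strategy is genuinely different from the paper's: the paper never proves a direct identity between $R$ and $S$. Instead it shows (Theorem \ref{thm4.4}) that $\left\{ x\in T_{2}:R(x)\geq 0\right\}$ coincides with $\left\{ x\in T_{2}:q_{(l,0)}(x)\geq 0\text{ for all }l\right\}$ by comparing $R$ with the partial sums that compute $q_{(l,0)}$, and separately (Theorem \ref{Th:Am}, via the $\tau=1$ determinantal formula and the limit $r(t+\alpha)=-\Gamma(\alpha)^{2}\pi^{-1}s(t)$ of Proposition \ref{Prop-q}) that the same set equals $\left\{ x\in T_{2}:S(x-\alpha)\geq 0\right\}$; Theorem \ref{W2} is then the equality of the two descriptions of $\mathcal{A}_{0}\cap T_{2}$. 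Your proposed identity $R=F\cdot S$ is in fact true: on the interior of $T_{2}$ one has
\[
R(x_{1},x_{2})=\frac{x_{1}^{2}-\rho_{2}^{2}}{(x_{1}+x_{2})\,s(x_{1}-\alpha)}\,S(x_{1}-\alpha,x_{2}-\alpha),
\]
with strictly positive prefactor, so your approach is viable and would give a sharper, purely local statement than the paper's indirect argument.

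As written, however, the proof has a genuine gap: its entire content is the closed-form evaluation of the nonterminating balanced $_{3}F_{2}$, and that computation is only described, not carried out --- you yourself defer the matching of the two connection-formula branches as ``the hard part.'' The intermediate integral representation is asserted without derivation, and the endgame is internally inconsistent: if $F$ really ``collects the Jacobian $(y_{1}-y_{2})$'' then $F$ vanishes on the diagonal $x_{1}=x_{2}$, which lies in $T_{2}$, so it cannot be strictly positive on the interior; in the correct identity the factor $(y_{1}-y_{2})$ is absorbed entirely into the divided difference $S$. Moreover the hypergeometric machinery is unnecessary, because the series for $R$ telescopes: setting $u_{k}=\frac{(\rho_{2}+x_{2})_{k}(\rho_{2}-x_{2})_{k}}{(\rho_{2}+x_{1})_{k}(\rho_{2}-x_{1})_{k}}$ one checks that the $k$-th term of $R$ equals $\frac{\rho_{2}^{2}-x_{1}^{2}}{x_{1}^{2}-x_{2}^{2}}\left(u_{k+1}-u_{k}\right)$, whence $R=\frac{\rho_{2}^{2}-x_{1}^{2}}{x_{1}^{2}-x_{2}^{2}}\left(\lim_{k}u_{k}-1\right)$, and $\lim_{k}u_{k}=s(x_{2}-\alpha)/s(x_{1}-\alpha)$ is exactly Proposition \ref{Prop-q}. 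This yields the displayed identity in a few lines and reduces the remaining work to the diagonal and boundary cases that the paper already treats separately.
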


This description of $W$ is facilitated by a Weyl-type formula for the Okounkov
polynomials $P_{\lambda}\left(  x;\tau,\alpha\right)  $ for $\tau=1$, that we
describe below. It tunes out that for $\tau=1$, the $P_{\lambda}$ can be
expressed in terms of rank $1$ Okounkov polynomials, which are given
explicitly as follows
\[
p_{l}\left(  z;\alpha\right)  =\prod\nolimits_{k=0}^{l-1}\left[  z^{2}-\left(
k+\alpha\right)  ^{2}\right].
\]
For $\mu\in\Lambda$ we define the \emph{alternant} $a_{\mu}$ to be the
determinant of the $n\times n$ matrix
\[
\left[  p_{\mu_{j}}\left(  x_{i};\alpha\right)  \right]  _{i,j=1}^{n}.
\]
For $\delta=\left(  n-1,\ldots,1,0\right)  $, the alternant is in fact the
Vandermonde determinant%
\[
a_{\delta}\left(  x;\alpha\right)  =\prod\nolimits_{i<j}\left(  x_{i}%
^{2}-x_{j}^{2}\right).
\]

\begin{theorem}
For $\tau=1$ we have $P_{\lambda}\left(  x;\tau,\alpha\right)  =\dfrac
{a_{\lambda+\delta}\left(  x;\alpha\right)  }{a_{\delta}\left(  x\right)  }.$
\end{theorem}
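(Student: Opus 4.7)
\emph{Plan.} The strategy is to verify that $A_{\lambda}(x):=a_{\lambda+\delta}(x;\alpha)/a_{\delta}(x)$ satisfies the three characterizing properties of $P_{\lambda}(x;1,\alpha)$ recalled earlier in the excerpt: it is an even symmetric polynomial of total degree $2|\lambda|$; it vanishes at every $\mu+\rho$ with $\mu\in\Lambda$, $|\mu|\leq|\lambda|$, $\mu\neq\lambda$; and its leading coefficient matches the standard normalization. Since $\tau=1$ is generic in the sense that $P_{\lambda}$ is determined by its vanishing up to a scalar, this will force $A_{\lambda}=P_{\lambda}(x;1,\alpha)$.

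\emph{Polynomial structure.} Each $p_{l}(z;\alpha)$ is monic of degree $2l$ in $z$ and even in $z$, so $a_{\lambda+\delta}$ is a polynomial of total degree $2(|\lambda|+|\delta|)$, antisymmetric under permutations of the $x_{i}$ and even in each $x_{i}$ separately. Antisymmetry combined with variable-wise evenness forces $(x_{i}^{2}-x_{j}^{2})\mid a_{\lambda+\delta}$ for all $i<j$; since these factors are pairwise coprime, $a_{\delta}(x)=\prod_{i<j}(x_{i}^{2}-x_{j}^{2})$ divides $a_{\lambda+\delta}$. The quotient $A_{\lambda}$ is therefore a symmetric, variable-wise even polynomial of total degree $2|\lambda|$.

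\emph{Vanishing.} Evaluate at $x_{i}=\mu_{i}+\rho_{i}=\nu_{i}+\alpha$, with $\nu_{i}:=\mu_{i}+n-i$, and set $\kappa_{j}:=\lambda_{j}+n-j$. The $(i,j)$-entry of $a_{\lambda+\delta}$ becomes
\[
p_{\kappa_{j}}(\nu_{i}+\alpha;\alpha)=\prod_{k=0}^{\kappa_{j}-1}\bigl[(\nu_{i}+\alpha)^{2}-(k+\alpha)^{2}\bigr],
\]
which, for generic $\alpha$ (and in particular any $\alpha>0$, covering $\alpha=(b+1)/2$), vanishes precisely when $\kappa_{j}>\nu_{i}$. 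A nonzero term in the permutation expansion of the determinant thus requires $\kappa_{\sigma(i)}\leq\nu_{i}$ for every $i$. Summing gives $|\lambda|+|\delta|\leq|\mu|+|\delta|$; combined with $|\mu|\leq|\lambda|$ this forces $|\mu|=|\lambda|$ and the equalities $\kappa_{\sigma(i)}=\nu_{i}$ for all $i$. Since $(\kappa_{j})$ and $(\nu_{i})$ are both strictly decreasing nonnegative integer sequences, they coincide as sequences, hence $\sigma=\id$ and $\lambda=\mu$, contradicting $\mu\neq\lambda$. So $a_{\lambda+\delta}(\mu+\rho;\alpha)=0$, while $a_{\delta}(\mu+\rho)\neq 0$ since its factors $(\nu_{i}+\alpha)^{2}-(\nu_{j}+\alpha)^{2}$ are nonzero for $i\neq j$; hence $A_{\lambda}$ vanishes at $\mu+\rho$.

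\emph{Normalization and main obstacle.} Writing $y_{i}:=x_{i}^{2}$, the entry $p_{l}(x_{i};\alpha)=\prod_{k=0}^{l-1}\bigl(y_{i}-(k+\alpha)^{2}\bigr)$ is monic of degree $l$ in $y_{i}$, so $A_{\lambda}$ equals the factorial Schur polynomial in the $y_{i}$ attached to the nodes $\{(k+\alpha)^{2}\}_{k\geq 0}$. Its top homogeneous part in $y$ is the ordinary Schur polynomial $s_{\lambda}(y)$, whose lexicographic leading monomial $y^{\lambda}=x^{2\lambda}$ has coefficient $1$; this matches the standard normalization of $P_{\lambda}(x;1,\alpha)$, whose top homogeneous part at $\tau=1$ degenerates from the Jack polynomial to $s_{\lambda}(y)$. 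The principal obstacle is the determinantal vanishing step, which closes cleanly via the weight-summation and strict monotonicity above; matching the leading coefficient is then routine.
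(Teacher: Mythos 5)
Your proposal is correct and follows essentially the same route as the paper's proof: verify the three characterizing properties from the uniqueness theorem for Okounkov polynomials, with the vanishing step handled by expanding the determinant, using the vanishing of $p_{l}(m+\alpha;\alpha)$ for $0\leq m\leq l-1$ to force $\kappa_{\sigma(i)}\leq\nu_{i}$, and summing to conclude $\sigma=\mathrm{id}$ and $\mu=\lambda$. Your divisibility argument for why the quotient is a polynomial is a welcome elaboration of a step the paper dismisses as obvious, but the substance of the proof is identical.
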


This is proved in Theorem \ref{thm-b} below.

\begin{corollary}
For $G=U\left(  m,n\right)  $ we have $\left(  \tau,\alpha\right)  =\left(
1,\frac{m-n+1}{2}\right)  $ and
\[
\mathcal{A}=\left\{  x:\left(  -1\right)  ^{\left\vert \lambda\right\vert
}\dfrac{a_{\lambda+\delta}\left(  x;\alpha\right)  }{a_{\delta}\left(
x;\alpha\right)  }\geq0\text{ for all }\lambda\right\}  .
\]

\end{corollary}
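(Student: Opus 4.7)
The plan is to combine two results proved earlier in the paper and apply them to the specific group $G = U(m,n)$.

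First, I would identify the parameters $(\tau,\alpha)$ for $G = U(m,n)$ with $m \geq n$. The restricted root system is of type $BC_n$ (degenerating to $C_n$ when $m = n$): the medium roots have multiplicity $d = 2$ and the short roots have multiplicity $2b = 2(m-n)$. Applying the formulas $\tau = d/2$ and $\alpha = (b+1)/2$ from (\ref{=tau-alpha}) then gives $(\tau,\alpha) = (1, \tfrac{m-n+1}{2})$ as claimed.

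Next, I would invoke the preceding Corollary, which characterizes $\mathcal{A} = \{x : q_\lambda(x) \geq 0 \text{ for all } \lambda \in \Lambda\}$, together with the definition $q_\lambda(x) = (-1)^{|\lambda|} P_\lambda(x;\tau,\alpha)$ from (\ref{=q}). For $\tau = 1$, the Weyl-type formula stated just above asserts $P_\lambda(x;1,\alpha) = a_{\lambda+\delta}(x;\alpha)/a_\delta(x;\alpha)$, so substituting this identity into the characterization of $\mathcal{A}$ yields the displayed description.

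There is essentially no obstacle here: the proof is a direct substitution once the parameters are pinned down. The only minor subtlety worth mentioning is that the ratio $a_{\lambda+\delta}/a_\delta$ should be interpreted as the polynomial quotient (equal to the polynomial $P_\lambda$), so the inequality is well-defined everywhere; on the walls of the Weyl chamber, where $a_\delta$ vanishes, the condition is to be read by continuity as $(-1)^{|\lambda|} P_\lambda(x;1,\alpha) \geq 0$.
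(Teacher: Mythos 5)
Your proposal is correct and follows exactly the route the paper intends: identify $d=2$, $2b=2(m-n)$ for $U(m,n)$ to get $(\tau,\alpha)=(1,\tfrac{m-n+1}{2})$ via (\ref{=tau-alpha}), then substitute the determinantal formula of Theorem \ref{thm-b} into the characterization of $\mathcal{A}$ from Corollary \ref{cor1-2}. The paper treats this as an immediate consequence and gives no separate argument, so your write-up (including the sensible remark about reading the ratio as the polynomial $P_\lambda$ on the vanishing locus of $a_\delta$) is entirely in line with it.
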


This description of $\mathcal{A}$, although still infinite, is much more
explicit. It plays a key role in the proof of Theorem \ref{W2}, which involves
a study of the limiting behavior of $a_{\lambda+\delta}\left(  x;\alpha
\right)  .$

\emph{Acknowledgement.} We  thank Tom Koornwinder for helpful
correspondence. Part of this work was done when Genkai Zhang
was visiting KIAS, Korea as a KIAS Scholar in April 2016. He would
like to thank the institute for its support and hospitality.
We also thank Alejandro Ginory for computational assistance, which was extremely helpful at an early stage of this project.

\section{Preliminaries}

\label{sect-1}

We shall introduce the Shimura operators following \cite{Shimura-1990}. See
also \cite{gz-shimura} for further study and references therein.

\subsection{Lie algebras of Hermitian type}

We will denote real Lie algebras by $\mathfrak{g}_{0}$, $\mathfrak{k}_{0}$
etc. and denote their complexifications by $\mathfrak{g,k}$ etc. Let $\left(
\mathfrak{g}_{0},\mathfrak{k}_{0}\right)  $ be an irreducible Hermitian
symmetric pair of real rank $n$, and let
\[
\mathfrak{g}=\mathfrak{p}^{-}+\mathfrak{k}+\mathfrak{p}^{+}%
\]
be its Harish-Chandra decomposition into $\left(  -1,0,1\right)  $ eigenspaces
with respect to a suitable central element $Z\in\mathfrak{k}$. Let
$\mathfrak{t}$ be a Cartan subalgebra of $\mathfrak{k}$, then $\mathfrak{t}$
is also a Cartan subalgebra of $\mathfrak{g},$ and we fix a compatible choice
of positive root systems satisfying
\[
\Delta^{+}\left(  \mathfrak{g,t}\right)  =\Delta^{+}\left(  \mathfrak{k,t}%
\right)  \cup\Delta\left(  \mathfrak{p}^{+}\mathfrak{,t}\right)
\]
Let $\gamma_{1},\ldots,\gamma_{n}\in\Delta\left(  \mathfrak{p}^{+}%
\mathfrak{,t}\right)  $ be the Harish-Chandra strongly orthogonal roots, and
let $h_{j}\in\mathfrak{t}$ be coroot corresponding to $\gamma_{j}$. Then we
have a commuting family of $sl_{2}$-triples%
\[
\left\{  h_{j},e_{j}^{+},e_{j}^{-}\right\}  ,\quad e_{j}^{\pm}\in
\mathfrak{p}^{\pm}.%
\]
We fix an invariant bilinear form on $\mathfrak{g}$ such that
\begin{equation}
(e_{1}^{+},e_{1}^{-})=1 \label{eq:norm}%
\end{equation}
Let $\mathfrak{t}_{-}=%
{\textstyle\sum\nolimits_{j=1}^{n}}
\mathbb{C}h_{j}$ be the span of the $h_{j},$ and let $\mathfrak{t}_{+}$ be the
orthogonal comlement of $\mathfrak{t}_{-}$ in $\mathfrak{t}$; then we have an
orthogonal decomposition $\mathfrak{t}=\mathfrak{t}_{-}+\mathfrak{t}_{+}$.We
also define%
\[
e_{j}=e_{j}^{-}+e_{j}^{+},\quad\mathfrak{a}=%
{\textstyle\sum\nolimits_{j=1}^{n}}
\mathbb{C}e_{j},\quad\mathfrak{h=a+t}_{+}.
\]
Then $\mathfrak{a}$ is a maximal abelian subspace of $\mathfrak{p}$ and
$\mathfrak{h}$ is a maximally split Cartan subalgebra of
$\mathfrak{\mathfrak{g}}$. The restricted root system $\Sigma\left(
\mathfrak{a},\mathfrak{\mathfrak{g}}\right)  $ is of type $BC_{n}$; more
precisely, if $\{\varepsilon_{j}\}\subset\mathfrak{a}^{\ast}$ is the basis
dual to $\{e_{j}\}\subset$ $\mathfrak{a}$, then we have
\[
\Sigma\left(  \mathfrak{a},\mathfrak{\mathfrak{g}}\right)  =\left\{
\pm\varepsilon_{i},\pm\varepsilon_{i}\pm\varepsilon_{j},\pm2\varepsilon
_{i}\right\} .
\]
The long roots $\pm2\varepsilon_{i}$ have multiplicity $1$, and they are
conjugate to $\pm\gamma_{j}$ via the Cayley transform that carries
$\mathfrak{a}$ to $\mathfrak{t}_{-}$ and $\mathfrak{h}$ to $\mathfrak{t}$
\cite{FK}. We denote the multiplicity of the medium roots $\pm\varepsilon
_{i}\pm\varepsilon_{j}$ and the short roots $\pm\varepsilon_{i}$ by $d$ and
$2b$ respectively, and we fix the following choice of positive roots
\[
\Sigma^{+}\left(  \mathfrak{a},\mathfrak{\mathfrak{g}}\right)  =\left\{
\varepsilon_{i}\right\}  \cup\left\{  \varepsilon_{i}\pm\varepsilon_{j}\mid
i<j\right\}  \cup\left\{  2\varepsilon_{i}\right\}  \text{.}%
\]
Then the half sum of positive roots is given as follows
\[
{\rho=\rho\left(  \mathfrak{a},\mathfrak{\mathfrak{g}}\right)  }=\sum2\rho
_{i}\varepsilon_{i}\text{,\quad}\rho_{i}=\frac{1}{2}\left[  d{(n-i)+1+b}%
\right]  .
\]

Let $G$ be the connected Lie group of the adjoint group of $\mathfrak{g}$ with
Lie algebras $\mathfrak{g}_{0}$, and $K$ the corresponding subgroup with Lie
algebra $\mathfrak{k}_{0}$. Then $G/K$ is a non-compact Hermitian symmetric space.



\subsection{Hua-Schmid decomposition}

\label{hua-schmid} As before we define
\begin{equation}
\Lambda=\left\{  \lambda\in\mathbb{Z}^{n}:\lambda_{1}\geq\cdots\geq\lambda
_{n}\geq0\right\}  . \label{Lam-set}%
\end{equation}
and for $\lambda\in\Lambda$ we let $W_{\lambda}$ denote the irreducible
$K$-module with highest weight $\sum_{j}\lambda_{j}\gamma_{j}$.

We write $S\left(  V\right)  $ for the symmetric algebra of a vector space and
$U\left(  \mathfrak{s}\right)  $ for the enveloping algebra of a Lie algebra.
We note that $\mathfrak{\mathfrak{p}^{-}\approx(\mathfrak{p}^{+})^{\ast}}$ as
$K$-modules, and also that since $\mathfrak{\mathfrak{p}^{\pm}}$ are abelian
we have $U\left(  \mathfrak{\mathfrak{p}^{\pm}}\right)  \approx S\left(
\mathfrak{\mathfrak{p}^{\pm}}\right)  $. By a result of Schmid (see e.g.
\cite{FK}) we have multiplicity free $K$-module decompositions
\[
U\left(  \mathfrak{\mathfrak{p}^{+}}\right)  \approx\oplus_{\lambda\in\Lambda
}W_{\lambda},\qquad U\left(  \mathfrak{\mathfrak{p}^{-}}\right)  \approx
\oplus_{\lambda\in\Lambda}W_{\lambda}^{\ast}%
\]

The space of all holomorphic polynomials on $\mathfrak{p}^{+}$ is naturally
identified with $S\left(  \mathfrak{\mathfrak{p}^{-}}\right)  $. It is
naturally equipped with the Fock space norm \cite{FK}, with the inner product
on $\mathfrak{p}^{-}\subset\mathfrak{g}$ being normalized above. We denote the
corresponding reproducing kernel for the subspace $W_{\lambda}^{\ast}$ by
$K_{\lambda}(z, w)$, $z, w\in\mathfrak{p}^{+}$.

\subsection{Line bundles over $G/K$}

\label{sec-2.2} In the subsequent discussion we will need to study equivariant
line bundles on $G/K$. Such bundles correspond to multiplicative characters of
$K$, and one has the following standard result.

\begin{lemma}
There exists a unique character $\iota$ of $K$ whose differential restricts to
$\frac{1}{2}\left(  \gamma_{1}+\cdots+\gamma_{n}\right)  $ on $\mathfrak{t}%
_{-}.$
\end{lemma}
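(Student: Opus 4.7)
The plan is to invoke the standard correspondence between characters of the connected compact group $K$ and integral weights in $\mathfrak{t}^{\ast}$ vanishing on the semisimple part $\mathfrak{t}^{ss}:=\mathfrak{t}\cap[\mathfrak{k},\mathfrak{k}]$. Since $G/K$ is irreducible Hermitian, the center $\mathfrak{z}(\mathfrak{k})$ is one dimensional, so $\mathfrak{t}=\mathfrak{t}^{ss}\oplus\mathfrak{z}(\mathfrak{k})$ and the possible character differentials form a one dimensional space.

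For \emph{uniqueness}, I would show that the restriction map from character differentials to $\mathfrak{t}_{-}^{\ast}$ is injective, equivalently that $\mathfrak{t}_{-}+\mathfrak{t}^{ss}=\mathfrak{t}$. The central element $Z$ satisfies $\gamma_{i}(Z)=1$ for all $i$, and since the $\gamma_{i}$ vanish on $\mathfrak{t}_{+}$, their restrictions to $\mathfrak{t}_{-}$ form the dual basis to $\{h_{k}/2\}$; this forces the projection $Z_{-}$ of $Z$ onto $\mathfrak{t}_{-}$ to equal $\tfrac{1}{2}\sum_{k}h_{k}$, which is nonzero. For \emph{consistency}, I would verify that $\tfrac{1}{2}\sum\gamma_{i}$ vanishes on $\mathfrak{t}_{-}\cap\mathfrak{t}^{ss}$. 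Strong orthogonality of the $\gamma_{j}$'s gives $(\sum_{j}\gamma_{j})(h_{k})=\gamma_{k}(h_{k})=2$ independent of $k$, so $\sum\gamma_{j}$ annihilates all differences $h_{i}-h_{j}$; since $\mathfrak{t}_{-}\cap\mathfrak{t}^{ss}$ is the hyperplane in $\mathfrak{t}_{-}$ orthogonal to $Z_{-}$, that is $\{\sum c_{k}h_{k}:\sum c_{k}=0\}$, the required vanishing follows.

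The main obstacle is \emph{integrality}: the weight $\mu\in\mathfrak{t}^{\ast}$ determined above must actually lie in the weight lattice of $K$. A clean approach is to realize $\iota$ as a fractional power of the character $\chi_{0}(k)=\det(\Ad(k)|_{\mathfrak{p}^{+}})$, whose differential is $\sum_{\gamma\in\Delta(\mathfrak{p}^{+},\mathfrak{t})}\gamma$ and satisfies $d\chi_{0}|_{\mathfrak{t}_{-}}=c\sum\gamma_{i}|_{\mathfrak{t}_{-}}$ for a constant $c$ determined by root-system data; then $\iota$ corresponds to the $(2c)$-th root of $\chi_{0}$. Case by case one verifies that this root exists as a genuine character of $K$---for instance $\iota(A,B)=\det(A)$ when $G=SU(p,q)$ and $k=(A,B)\in S(U(p)\times U(q))$, which one can check directly has differential $\tfrac12\sum\gamma_i$ on $\mathfrak{t}_-$. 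This case analysis is the most tedious step, though the existence of $\iota$ is standard in the theory of Hermitian symmetric spaces.
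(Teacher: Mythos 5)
The paper does not actually prove this lemma; its ``proof'' is a one-line citation to Schlichtkrull \cite{schlicht}, so any genuine argument you give is necessarily a different route. Your uniqueness half is correct and complete: characters of the connected group $K$ are determined by their differentials, these differentials vanish on $\mathfrak{t}^{ss}=\mathfrak{t}\cap[\mathfrak{k},\mathfrak{k}]$, and your computation $Z_{-}=\tfrac12\sum_k h_k\neq 0$ (using $\gamma_i(Z)=1$ and $\gamma_i|_{\mathfrak{t}_+}=0$) shows $\mathfrak{t}_-\not\subseteq\mathfrak{t}^{ss}$, hence $\mathfrak{t}_-+\mathfrak{t}^{ss}=\mathfrak{t}$ and restriction to $\mathfrak{t}_-$ is injective on character differentials. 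The consistency check is also the right one, and correctly reflects the fact (noted in the paper's subsequent remark) that $d\iota$ need not vanish on $\mathfrak{t}_+$ outside tube type, so the candidate weight is the extension-by-zero on $\mathfrak{t}^{ss}$ rather than $\tfrac12\sum\gamma_i$ itself. You implicitly use that the $h_k$ are mutually orthogonal of equal norm to identify $\mathfrak{t}_-\cap\mathfrak{t}^{ss}$ with $\{\sum c_kh_k:\sum c_k=0\}$; that is true but worth saying.

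The gap is in the existence half, which is the actual content of the lemma. You correctly reduce it to integrality, i.e.\ to the existence of a $p_0$-th root of $\chi_0(k)=\det(\operatorname{Ad}(k)|_{\mathfrak{p}^+})$ as a genuine character of $K$ (with $p_0=2+(n-1)d+b$, as in the paper's equation (\ref{=p0root})), but then you defer to an unexecuted ``case by case'' verification, carrying out only $SU(p,q)$. This is not a routine omission: the answer depends on the isogeny class of $K$, not just on $\mathfrak{k}$. For example, for $G=PSL(2,\mathbb{R})$ one has $K\cong SO(2)/\{\pm I\}$, whose character lattice is generated by a weight with differential $\gamma_1$ rather than $\tfrac12\gamma_1$, so no such $\iota$ exists there; your $SU(p,q)$ example works precisely because $SU(p,q)$ is not the adjoint group. (This also exposes a looseness in the paper itself, which takes $G$ to be essentially the adjoint group while citing a result that holds for a suitable covering.) So the case analysis, together with a precise specification of which $K$ is meant, cannot be waved away: it is exactly the content of Schlichtkrull's theorem that the paper outsources, and your proof is incomplete without it.
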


\begin{proof}
See \cite{schlicht}.
\end{proof}

\begin{remark}
In geometric terms, the character $\iota$ is a generator of the Picard group
of holomorphic line bundles on $G/K$. We have
\begin{equation}
\iota^{p_{0}}\left(  k\right)  =\operatorname{det}(\text{ ad}%
(k)|_{\mathfrak{p}^{+}})\text{,\quad}p_{0}=2+(n-1)d+b\text{,} \label{=p0root}%
\end{equation}
and thus $\iota$ is the $p_{0}$-th root of the canonical line bundle on $G/K$.
The differential $d\iota$ vanishes on $\mathfrak{t}_{+}$ iff $G/K$ is tube
type, otherwise $d\iota|_{\mathfrak{t}_{+}}$ is as in \cite[Section 5, p.
288]{schlicht} (with $l=1$ there).
\end{remark}

For any integer $p$, we consider the character $\iota^{p}(k)=\iota(k)^{p}$ and
we write $C^{\infty}(G/K,p)$ for the space of smooth sections of the
corresponding holomorphic line bundle over $G/K$. Explicitly we have
\[
C^{\infty}(G/K,p)=\{f\in C^{\infty}(G);f(gk)=\iota(k)^{p}f(g),g\in G,k\in
K\}.
\]
The group $G$ acts on $C^{\infty}(G/K,p)$ via the left regular action.

\begin{remark}
In \cite{gz-shimura} the eigenvalues of the Shimura operators on $G/K$ were
studied by explicit computations. The line bundle parameter $p$ here
corresponds to $-\nu$ there. When $-p=\nu>p_{0}-1$ there is a holomorphic
discrete series representation in the space $L^{2}(G/K,p)$, and it is in the
common kernels of the "adjoint" Shimura operators $\mathcal{M}_{\mu}$ in
(\ref{l-m-mu}) below. Similar results hold for the non-compact dual $U/K$ of
$G/K$.
\end{remark}

\subsection{The Schlichtkrull-Cartan-Helgason theorem}

Finite dimensional representations of $\mathfrak{k}$ and $\mathfrak{g}$ are
parametrized by their highest weights as linear functionals on $\mathfrak{t}$.
The representations we shall treat in the present paper have their highest
weights being determined by their restriction to $\mathfrak{t}_{-}$, so if a
linear function on $\mathfrak{t}$ is of the form $\sum_{j=1}^{n}\lambda
_{j}\gamma_{j}$ on $\mathfrak{t}_{-}$ and is dominant with respect to the
roots $\Delta^{+}(\mathfrak{k},\mathfrak{t})$ respectively $\Delta
^{+}(\mathfrak{g},\mathfrak{t})$, then the corresponding representations of
$\mathfrak{k}$ and $\mathfrak{g}$ will be denoted by $W_{\lambda}$
respectively $V_{\lambda}$.

By the Cartan-Helgason theorem the finite dimensional $K$-spherical
representations of $G$ can be parameterized in terms of their highest weights
restricted to Cartan subspace $\mathfrak{a}$, they are precisely of the form
$\sum2\lambda_{i}\varepsilon_{i} $, where $\lambda$ ranges over the same set
(\ref{Lam-set}).

We shall need a generalization by Schlichtkrull of the Cartan-Helgason theory
for the line bundle case. We call a vector $v\in V$ in a representation $(\pi,
V)$ of $G$ a $(K, \iota^{p})$-spherical vector if
\[
\pi(k) v=\iota(k)^{p}v
\]
Then one has the following result \cite[Th 7.2]{schlicht}.

\begin{lemma}
\label{schli} Let $p$ be an integer. For each $\lambda\in\Lambda$ there is a
unique representation $V_{\lambda, p} $ of $G$ in $C^{\infty}(G/K,p)$ whose
highest weight restricts to $\sum_{j=1}^{n}\left(\lambda_{j}+\frac{|p|}2\right)\gamma
_{j}$ on $\mathfrak{t_{-}}$. In particular each space $V_{\lambda, p}$
contains a unique $(K, \iota^{\pm p})$-spherical vector $v_{\pm p}$ up to
non-zero scalars.
\end{lemma}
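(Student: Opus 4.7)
The plan is to follow Schlichtkrull's strategy of extending the classical Cartan--Helgason theorem from $K$-fixed vectors to $(K,\iota^p)$-equivariant ones. The core reduction is standard: by Frobenius reciprocity an irreducible $(\pi,V)$ of $G$ embeds in $C^\infty(G/K,p)$ if and only if the contragredient $V^*$ contains a nonzero vector transforming by $\iota^p$ under $K$, and the multiplicity of the embedding equals $\dim\mathrm{Hom}_K(V|_K,\iota^{-p})$. Thus the problem reduces to deciding which highest-weight modules $V_\mu$ admit the one-dimensional $K$-type $\iota^{-p}$, and to counting its multiplicity.

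First I would parameterize the admissible $\mu$. Decompose $\mu = \mu_- + \mu_+$ via the orthogonal splitting $\mathfrak{t} = \mathfrak{t}_- + \mathfrak{t}_+$. Any $K$-weight of $V_\mu$ is obtained from the highest weight $v_\mu$ by iterated application of negative-root lowering operators; since $\mathfrak{p}^-$ carries the strongly orthogonal roots $-\gamma_j$, the attainable $\mathfrak{t}_-$-weights are precisely $\mu_-  - \sum_j m_j \gamma_j$ with $m_j \ge 0$. Requiring this to equal the $\mathfrak{t}_-$-weight $-\tfrac{p}{2}\sum_j\gamma_j$ of $\iota^{-p}$ forces $\mu_- = \sum_j(\lambda_j + |p|/2)\gamma_j$ for some $\lambda\in\Lambda$, and the analogous analysis for $\iota^{+p}$ accounts for the $|p|$ in place of $p$. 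For $\mu_+$, the requirement that the weight extend $\iota^{-p}$ as a genuine $K$-character together with $\Delta^+(\mathfrak{g},\mathfrak{t})$-dominance pins down $\mu_+$ uniquely: in the tube case $d\iota$ already vanishes on $\mathfrak{t}_+$ and one takes $\mu_+ = 0$, while in the non-tube case $\mu_+$ is read off from the explicit formula for $d\iota|_{\mathfrak{t}_+}$ recorded in \cite{schlicht}.

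Second, existence. With $\mu$ as above, I would realize $V_\mu$ inside $C^\infty(G/K,p)$ via a line-bundle analogue of Helgason's construction: a suitable highest-weight vector in the degenerate principal series induced from a character of a maximal parabolic twisted by $\iota^p$ produces an analytic section whose $G$-cyclic span is finite-dimensional, irreducible, and of highest weight $\mu$. Matrix coefficients against this cyclic vector then realize both $(K,\iota^{\pm p})$-spherical vectors $v_{\pm p}$ of the lemma. Uniqueness follows by matching central characters: if $V_\mu$ and $V_{\mu'}$ both embed in $C^\infty(G/K,p)$ with identical $\mathfrak{t}_-$-restriction of highest weight, the central element $Z\in\mathfrak{k}$ acts by the common scalar $p\cdot d\iota(Z)$ on the spherical vector of each, and combined with the dominance constraint on $\mathfrak{t}_+$ this forces $\mu = \mu'$. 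The one-dimensionality of the space of $(K,\iota^{\pm p})$-spherical vectors in $V_{\lambda,p}$ is then the statement that $\mu_\mp$ is an extreme $K$-weight of multiplicity one in $V_\mu$, which follows because $-\tfrac{p}{2}\sum_j\gamma_j$ lies on the boundary of the $K$-weight polytope.

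The main obstacle is the correct choice of $\mu_+$ in the non-tube case, where $d\iota|_{\mathfrak{t}_+}$ is a nontrivial linear functional and one must verify that the combinatorics of lowering from $v_\mu$ into the $K$-type $\iota^{-p}$ are compatible with $\Delta^+(\mathfrak{g},\mathfrak{t})$-dominance of $\mu$. This compatibility is precisely the content of Schlichtkrull's table in \cite{schlicht} and is what distinguishes the line-bundle generalization from the classical tube-type Cartan--Helgason theorem.
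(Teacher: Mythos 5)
The paper offers no proof of this lemma at all: it is quoted verbatim as \cite[Th.~7.2]{schlicht}, so the only fair comparison is with Schlichtkrull's own argument, which your outline does track in broad strokes (Frobenius reciprocity, reduction to locating the one-dimensional $K$-type $\iota^{-p}$ in a finite-dimensional highest weight module, splitting the highest weight along $\mathfrak{t}=\mathfrak{t}_-+\mathfrak{t}_+$). However, two steps as you state them would not go through. First, the lattice argument on $\mathfrak{t}_-$ is too coarse: the weights of $V_\mu$ differ from $\mu$ by sums of roots of $\mathfrak{g}$, and the restrictions of those roots to $\mathfrak{t}_-$ include the half-roots $\tfrac12(\gamma_i\pm\gamma_j)$ and $\tfrac12\gamma_i$ (this is exactly the $BC_n$ structure the paper records for $\Sigma(\mathfrak{a},\mathfrak{g})$), not only the strongly orthogonal $\gamma_j$ coming from $\mathfrak{p}^-$. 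So reachability of the weight $-\tfrac p2\sum_j\gamma_j$ only places $\mu_-$ in a half-integer lattice; the conclusion that $\mu_-=\sum_j(\lambda_j+\tfrac{|p|}2)\gamma_j$ with $\lambda\in\Lambda$ (integral, decreasing, nonnegative --- and with $|p|$ rather than $p$) requires the Cartan--Helgason-type integrality and dominance conditions $\langle\mu,\alpha\rangle/\langle\alpha,\alpha\rangle\in\mathbb{Z}_{\geq 0}$ on restricted roots, which is the actual content of the theorem and is not supplied by your weight-string argument.

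Second, the uniqueness argument via the central element $Z$ is insufficient. The scalar by which $Z$ acts on a $(K,\iota^{p})$-equivariant vector is determined by $p$ alone, identically for every candidate module, so it gives a single linear condition (and an empty one at that), whereas $\mathfrak{t}_+$ may have dimension greater than one; dominance of $\mu_+$ does not single out one weight either. The determination of $\mu_+$ --- trivial in the tube case, a specific $p$-dependent multiple of a specific weight in the non-tube case --- is precisely the nontrivial classification carried out in \cite{schlicht}, and you have in effect assumed it where you say the dominance constraint \emph{forces} $\mu=\mu'$. The multiplicity-one claim for the spherical vector is fine in spirit (a one-dimensional $K$-type occurring in a $\mathfrak{t}$-weight space of dimension one has multiplicity at most one), but the existence half of the statement again rests on the part of the argument that is missing. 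In short: the skeleton is right, but the two load-bearing steps are the ones left unproved.
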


We denote $W_{\lambda, p}=W_{\lambda}\otimes C_{\frac p2}$, which is an
irreducible representation of $\mathfrak{k}$. Notice that the highest weight
of $V_{\lambda, p}$ is the same as $W_{\lambda, |p|}$. Also, $V_{\lambda, p}$
contains both $(K, \iota^{p})$ and $(K, \iota^{-p})$ and spherical vectors.
(This is not true for infinite dimensional highest weight representations.) It
will be convenient to treat the space $V_{\lambda, p} $ as an abstract
representation, more precisely if $V_{\lambda, p}$ is any irreducible
representation space of $U $ with highest weight $\sum_{j}\lambda_{j}%
\gamma_{j}$ and containing a $(K, \iota^{-p})$-spherical normalized vector
$v_{-p}$, then the map
\[
v\in V_{\lambda, p}\to f_{v}(g)=\left(\pi_{\lambda}(g^{-1})v, v_{-p}\right)
\]
is a realization of $V_{\lambda, p}$ in the space $C^{\infty}(G/K, \iota^{p}%
)$. Here $\left(\cdot, \cdot\right)$ is the Hermitian inner product in $V_{\lambda, p}$.


\subsection{ Shimura operators}

Shimura operators are parametrized by the set $\Lambda$. More precisely for
each $\mu\in\Lambda$ the Shimura operator corresponds to the identity element
$1\in Hom\left(  W_{\mu},W_{\mu}\right)  $ via the multiplication in the
universal enveloping algebra $U\left(  \mathfrak{g}\right)  $%
\[
1\in End\left(  W_{\lambda}\right)  \approx W_{\lambda}^{\ast}\otimes
W_{\lambda}\hookrightarrow U\left(  \mathfrak{p}^{-}\right)  \otimes U\left(
\mathfrak{p}^{+}\right)  \overset{mult}{\longrightarrow}U\left(
\mathfrak{g}\right)  .
\]
Explicitly let $\{\xi_{\alpha}\}$ be a basis of $W_{\mu}\subset
S(\mathfrak{p}^{+})$ and $\{\eta_{\alpha}\}$ be the dual basis $W_{\mu}^{\ast
}\subset S(\mathfrak{p}^{-})$ and define%

\begin{equation}
\mathcal{L}_{\mu}=\sum\nolimits_{\alpha}\eta_{\alpha}\xi_{\alpha}%
,\quad\mathcal{M}_{\mu}=\sum\nolimits_{\alpha}\xi_{\alpha}\eta_{\alpha},
\label{l-m-mu}%
\end{equation}
viewed as elements of $U(\mathfrak{g})^{K}$ acting in $C^{\infty}(G)$ (or
$C^{\infty}(U)$ for the compact dual $U$ of $G$)as left invariant differential operator.

Alternatively we may take an orthonormal basis $\xi_{\alpha}$ of $W_{\mu
}\subset S(\mathfrak{p}^{+})$and the dual basis is then given by
$\eta_{\alpha}=\xi_{\alpha}^{\ast}$, where $v\rightarrow v^{\ast}$ is the
conjugation in $\mathfrak{p}$ with respect to the real form $\mathfrak{p}_{0}$
extended to $S(\mathfrak{p})$. Thus we have%

\begin{equation}
\mathcal{L}_{\mu}=\sum\nolimits_{\alpha}\xi_{\alpha}^{\ast}\xi_{\alpha}%
,\quad\mathcal{M}_{\mu}=\sum\nolimits_{\alpha}\xi_{\alpha}\xi_{\alpha}^{\ast}.
\label{L-mu}%
\end{equation}
Note that the operators $(-1)^{\mu}\mathcal{L}_{\mu}$ and $(-1)^{\mu
}\mathcal{M}_{\mu}$ descend to $G$-invariant differential operator on
$C^{\infty}(G/K,p)$ and are formally non-negative on $C^{\infty}(G/K,p)$; they
also define $U$-invariant differential operators on $C^{\infty}(U/K,p)$ for
the compact symmetric space $U/K$ with $\mathcal{L}_{\mu}$ and $\mathcal{M}%
_{\mu}$ being non-negative instead. See Section \ref{section-pro} below.

For $\mu\in\Lambda$, the Shimura operator $\mathcal{L}_{\mu}$ and
$\mathcal{M}_{\mu}$ have order $2\left\vert \mu\right\vert $ where%
\[
\left\vert \mu\right\vert =\mu_{1}+\cdots+\mu_{n}.
\]

The  Harish-Chandra homomorphism
for invariant differential operators on $C^\infty(G/K)$
can be generalized
to line bundles over $G/K$; see e.g. \cite{Shimura-1990,
Shimeno-jfa} and references therein. More precisely there exists
a Weyl group invariant polynomials $\eta_p(\mathcal L_{\mu})$,
the Harish-Chandra homomorphism of 
the Shimura operator $\mathcal L_{\mu}$, such that
$\mathcal{L}_{\mu}$ on the irreducible representations $V_{\lambda, p}\subset
C^{\infty}(G/K, p)$ above by 
\begin{equation}
  \label{eq:eta-p}
\mathcal{L}_{\mu} 
v
=\eta_p\left( \mathcal{L}_{\mu}\right)\left( \lambda+\frac p2+\rho\right)
  v, \,
\quad v\in V_{\lambda, p}.  
\end{equation}
Furthermore  $\eta_p(\mathcal L_{\mu}) \in\mathcal{Q}_{2|\mu|}$.

\section{Okounkov polynomials\label{oko}}

In this section we discuss some key properties of a family of polynomials
introduced by Okounkov \cite{Okounkov}, or rather their $q\rightarrow1$ limit
as discussed \cite[(7.2)]{Koor}. These polynomials play an important role in
the theory of symmetric functions, and they generalize an earlier family of
polynomials introduced by one of us in \cite{sahi-spec}, and studied in
\cite{Knop-Sahi, Sahi-interp}.

Let $\mathbb{F}=\mathbb{Q}(\tau,\alpha)$ be the field of rational functions in
$\tau,\alpha$. Consider the polynomial ring $\mathbb{F}\left[  x_{1}%
,\ldots,x_{n}\right]  $ equipped with the natural action of the group
$W=S_{n}\ltimes\left(  \mathbb{Z}/2\right)  ^{n}$ by permutations and sign
changes, and let%
\[
\mathcal{Q=}\mathbb{F}\left[  x_{1},\ldots,x_{n}\right]  ^{W}%
\]
be the subring of even symmetric polynomials. Okounkov polynomials
$P_{\lambda}\left(  x;\tau,\alpha\right)  $ form a distinguished linear basis
of $\mathcal{Q}$, indexed by the set $\Lambda$. We refer the reader to
\cite{Okounkov} and \cite[(7.2)]{Koor} for more background on these
polynomials, noting that in the latter paper they are referred to as
Okounkov's $BC_{n}$ type interpolation polynomials, and denoted $P_{\lambda
}^{ip}\left(  x;\tau,\alpha\right)  .$

\subsection{Combinatorial formula}

We first recall
\cite{Koor, Macdonald-book} some basic combinatorial terminology associated to partitions.
The length of a partition $\lambda\in\Lambda$ is
\[
l\left(  \lambda\right)  =\max\left\{  i\mid\lambda_{i}>0\right\}  .
\]
The Young diagram of $\lambda$ is the collection of \textquotedblleft
boxes\textquotedblright\ $s=(i,j)$
\[
\left\{  \left(  i,j\right)  :1\leq i\leq l\left(  \lambda\right)  ,1\leq
j\leq\lambda_{i}\right\}  .
\]
The \emph{arm/leg/coarm/coleg} of $s=\left(  i,j\right)  \in\lambda$ are
defined as follows:%
\[
a\left(  s\right)  =\lambda_{i}-j,l\left(  s\right)  =\#\left\{
k>i\mid\lambda_{k}\geq j\right\}  ,a^{\prime}(s)=j-1,l^{\prime}(s)=i-1.
\]
We write $\mu\subseteq\lambda$ if $\mu_{i}\leq\lambda_{i}$ for all $i$. In this
case the diagram of $\mu$ can be regarded as a subset of $\lambda$, and we
define%
\begin{gather*}
\left(  R\backslash C\right)  _{\lambda\backslash\mu}=\left\{  s\in\mu\mid
a_{\lambda}\left(  s\right)  >a_{\mu}\left(  s\right)  ,l_{\lambda}\left(
s\right)  =l_{\mu}\left(  s\right)  \right\}  ,\\
\psi_{\lambda\backslash\mu}=%
{\displaystyle\prod}
{}_{s\in\left(  R\backslash C\right)  _{\lambda\backslash\mu}}\frac{b_{\mu
}\left(  s\right)  }{b_{\lambda}\left(  s\right)  },\quad b_{\lambda}\left(
s\right)  =\frac{\tau l_{\lambda}\left(  s\right)  +a_{\lambda}\left(
s\right)  +\tau}{\tau l_{\lambda}\left(  s\right)  +a_{\lambda}\left(
s\right)  +1}.
\end{gather*}
A \emph{reverse} tableau $T$ of shape $\lambda$ is an assignment of the boxes
$s\in\lambda$ with numbers $T(s)\in\{1,\cdots,n\}$ so that $T(i,j)$ is
strongly decreasing in $i$ and weakly decreasing in $j$. Such a tableau
defines a sequence of partitions
\[
0=\lambda^{\left(  n\right)  }\subset\cdots\subset\lambda^{\left(  1\right)
}\subset\lambda^{\left(  0\right)  }=\lambda,\quad\lambda^{\left(  i\right)
}=\left\{  s\mid T\left(  s\right)  >i\right\}
\]
and we set%
\[
\psi_{T}=\prod\nolimits_{i=1}^{n}\psi_{\lambda^{\left(  i-1\right)
}\backslash\lambda^{\left(  i\right)  }}.
\]

\begin{definition}
The Okounkov polynomial is
\begin{equation}
P_{\lambda}\left(x;\tau,\alpha\right)=\sum_{T}\psi_{T}\prod_{s\in\lambda}\left[
x_{T(s)}^{2}-(a_{\lambda}^{\prime}(s)+\tau\left(  n-T(s)-l_{\lambda}^{\prime
}(s)\right)  +\alpha)^{2}\right]  , \label{koor-fo}%
\end{equation}
where the sum is over all reverse tableau $T$ of shape $\lambda$.
\end{definition}

The polynomial $P_{\lambda}(x;\tau,\alpha)$ is uniquely characterized by
certain vanishing conditions. To state these we set $\delta=\left(
n-1,\ldots,1,0\right)  $ and we define%
\[
\rho=\rho_{\tau,\alpha}=\left(  \rho_{1},\ldots,\rho_{n}\right)  ,\quad
\rho_{i}=\tau\delta_{i}+\alpha=\tau\left(  n-i\right)  +\alpha.
\]

\begin{theorem}
(\cite{Koor, Okounkov}) \label{Ok-poly} 
The polynomial $P_\lambda(x)=P_{\lambda}(x;\tau,\alpha)$ is in
$\mathcal{Q}$ and satisfy

\begin{enumerate}
\item $P_{\lambda}$ has degree $\leq2|\lambda|.$

\item The coefficient of $x_{1}^{2\lambda_{1}}\cdots x_{n}^{2\lambda_{n}}$ in
$P_{\lambda}$ is $1$.

\item $P_{\lambda}(\mu+\rho)=0\,$unless\thinspace$\lambda\subseteq\mu.$
\end{enumerate}
\end{theorem}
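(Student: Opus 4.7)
My plan is to verify each of the three claims directly from the combinatorial formula~(\ref{koor-fo}), isolating the $S_{n}$-invariance of $P_{\lambda}$ as the main technical hurdle.

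Part~(1) is immediate from the definition: for any reverse tableau $T$ the product $\prod_{s\in\lambda}$ is a product of $|\lambda|$ quadratic factors, each depending on a single variable $x_{T(s)}$, so $\deg P_{\lambda}\leq 2|\lambda|$. Evenness in each $x_{i}$ is equally clear, since $x_{T(s)}$ enters only through $x_{T(s)}^{2}$. For part~(2) I would identify the ``superstandard'' reverse tableau $T^{\ast}(i,j)=n-i+1$; under this tableau each skew strip $\lambda^{(i-1)}\setminus\lambda^{(i)}$ is the single row with index $n-i+1$, so the set $(R\setminus C)_{\lambda^{(i-1)}/\lambda^{(i)}}$ is empty and hence $\psi_{T^{\ast}}=1$. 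Extracting the $x_{T^{\ast}(s)}^{2}$ term from each quadratic factor contributes the monomial $x_{n}^{2\lambda_{1}}x_{n-1}^{2\lambda_{2}}\cdots x_{1}^{2\lambda_{n}}$ with coefficient $1$; once $S_{n}$-symmetry is known, the reversal permutation $i\mapsto n+1-i$ transfers this to coefficient $1$ for $x_{1}^{2\lambda_{1}}\cdots x_{n}^{2\lambda_{n}}$.

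For part~(3) I would induct on $n$. Because $T(i,j)$ is strictly decreasing in $i$, any box labelled $n$ lies in row $1$, and by weak decrease across rows such boxes form a prefix $\{(1,1),\ldots,(1,k)\}$ of row~$1$ with $0\leq k\leq\lambda_{1}$. Specialising $x_{n}=\mu_{n}+\rho_{n}=\mu_{n}+\alpha$, the factor from box $(1,j)$ becomes $[(\mu_{n}+\alpha)^{2}-(j-1+\alpha)^{2}]$, which vanishes at $j=\mu_{n}+1$; consequently every tableau with $k>\mu_{n}$ drops out of the sum. The surviving terms reorganise into a rank-$(n-1)$ sum of the same shape~(\ref{koor-fo}) on the skew-shape $\lambda\setminus\{(1,1),\ldots,(1,k)\}$, with an appropriately shifted parameter. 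A careful bookkeeping of arm/leg data shows that the hypothesis $\lambda\not\subseteq\mu$ propagates to the reduced data $\widetilde\lambda\not\subseteq\widetilde\mu$, closing the induction; the base case $n=1$, where~(\ref{koor-fo}) collapses to the explicit product $\prod_{k=0}^{\lambda_{1}-1}[x_{1}^{2}-(k+\alpha)^{2}]$, can be checked by inspection.

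The main obstacle is the $S_{n}$-invariance of $P_{\lambda}$, which is not manifest from~(\ref{koor-fo}). My strategy would be to combine parts (1) and (3) with the dimension count $\dim\mathcal{Q}_{\leq 2|\lambda|}=\#\{\nu\in\Lambda:|\nu|\leq|\lambda|\}$, which shows that the subspace of $\mathcal{Q}$ cut out by the vanishing conditions in~(3) is at most one-dimensional. It then suffices to exhibit one nonzero symmetric polynomial satisfying these conditions, whereupon uniqueness forces our combinatorial $P_{\lambda}$ to lie in $\mathcal{Q}$ up to a scalar that part~(2) fixes to $1$. The existence of such a nonzero symmetric interpolation polynomial is the content of Okounkov's construction~\cite{Okounkov} via the double affine Hecke algebra, and its explicit identification with~(\ref{koor-fo}) is Koornwinder's main result in~\cite{Koor}. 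A direct combinatorial verification of permutation invariance from~(\ref{koor-fo}) alone would require intricate identities among the $\psi$-weights under adjacent transpositions, and this is the step I would defer to those references in a full write-up.
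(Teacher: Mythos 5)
First, a point of orientation: the paper does not prove Theorem \ref{Ok-poly} at all. It is quoted from \cite{Okounkov} and \cite{Koor} (hence the citation in the theorem header), and the paper's own work begins only afterwards, with Proposition \ref{th:char} and Theorem \ref{char-oko}, both of which are \emph{deduced from} Theorem \ref{Ok-poly}. So the relevant comparison is with those references, not with anything in this paper. Your verifications of the easy parts are sound: the degree bound and evenness are immediate from (\ref{koor-fo}); the tableau $T^{\ast}(i,j)=n-i+1$ is indeed the unique reverse tableau whose top-degree monomial is $x_{n}^{2\lambda_{1}}\cdots x_{1}^{2\lambda_{n}}$, and $\psi_{T^{\ast}}=1$ since each skew strip $\lambda^{(i-1)}\backslash\lambda^{(i)}$ is a full bottom row, so no arm lengths change; and peeling off the boxes labelled $n$ (a prefix of row $1$ whose $(1,j)$-factor specializes to $(\mu_{n}+\alpha)^{2}-(j-1+\alpha)^{2}$) is exactly the branching mechanism behind Okounkov's proof of the extra vanishing.

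Two of your steps, however, do not close. The skew-shape induction for part (3) is only gestured at: (\ref{koor-fo}) is stated for straight shapes, and after removing the row-$1$ prefix you are left with a reverse tableau of a genuinely skew shape with shifted parameters; making ``$\lambda\not\subseteq\mu$ propagates to the reduced data'' precise is the entire content of the argument and is not routine bookkeeping. More seriously, the closing uniqueness argument is circular on two counts. Uniqueness \emph{within} $\mathcal{Q}_{2|\lambda|}$ cannot force the combinatorial expression --- which is not a priori $S_{n}$-invariant --- to lie in $\mathcal{Q}$: the space of merely even polynomials of degree $\leq 2|\lambda|$ has dimension $\binom{n+|\lambda|}{n}$, much larger than $|\Lambda^{|\lambda|}|$, so the vanishing conditions do not determine the polynomial there. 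And the ``at most one-dimensional'' claim itself requires injectivity of the restriction map of Proposition \ref{th:char}, which the paper establishes precisely by invoking the existence and triangularity of the $P_{\mu}$, i.e.\ by invoking Theorem \ref{Ok-poly}; so as written you assume what you set out to prove. The honest repair is the one you half-acknowledge: take Okounkov's construction (or its $q\to1$ limit in \cite{Koor}) as the definition, which delivers symmetry and the vanishing, and cite Koornwinder's identification of that object with (\ref{koor-fo}) --- at which point all three parts are quotations, which is exactly the route the paper takes.
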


For future purposes we also define (\cite{Stanley}, \cite[(3.7)]{B-O})
\begin{equation}
k_{\mu}=\prod_{s\in\mu}
\left(
\tau l\left(  s\right)  
+a\left(  s\right)
+1\right)
\label{=k-mu}%
\end{equation}

\subsection{Uniqueness}

We prove a slight strengthening of Theorem \ref{Ok-poly}. For this we define
\[
\mathcal{Q}_{k}=\left\{  P\in\mathcal{Q}:\deg\left(  P\right)  \leq k\right\}
,\quad\Lambda^{d}=\left\{  \lambda\in\Lambda:\left\vert \lambda\right\vert
\leq d\right\}
\]

\begin{proposition}
\label{th:char}Any polynomial in $\mathcal{Q}_{2d}$ is characterized by its
values on the set
\[
\Lambda^{d}+\rho=\left\{  \lambda+\rho:\lambda\in\Lambda^{d}\right\}  \text{.}%
\]
\end{proposition}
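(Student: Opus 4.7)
The plan is to reduce to a dimension count and then to a triangular matrix argument using the Okounkov polynomials themselves as a basis.

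First I would establish $\dim_{\mathbb{F}}\mathcal{Q}_{2d}=|\Lambda^d|$. This follows by substituting $y_i=x_i^2$: the ring $\mathcal{Q}$ of even symmetric polynomials in the $x_i$ is carried isomorphically onto the ring of symmetric polynomials in the $y_i$, and $\mathcal{Q}_{2d}$ corresponds to symmetric polynomials of degree at most $d$ in $n$ variables. The usual monomial symmetric basis, indexed by partitions of total size $\le d$ with at most $n$ parts, shows this dimension is exactly $|\Lambda^d|$. Since the target set has $|\Lambda^d+\rho|=|\Lambda^d|$ points as well, the evaluation map
\[
\mathrm{ev}\colon\mathcal{Q}_{2d}\to\mathbb{F}^{\Lambda^d+\rho},\qquad P\mapsto\bigl(P(\mu+\rho)\bigr)_{\mu\in\Lambda^d},
\]
has source and target of equal finite dimension, so it suffices to prove injectivity.

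Next I would use $\{P_\lambda:\lambda\in\Lambda^d\}$ as a basis of $\mathcal{Q}_{2d}$. Parts (1) and (2) of Theorem \ref{Ok-poly} give $P_\lambda\in\mathcal{Q}_{2|\lambda|}\subseteq\mathcal{Q}_{2d}$ with top even monomial $x_1^{2\lambda_1}\cdots x_n^{2\lambda_n}$, so the expansion from $\{P_\lambda\}$ to the monomial basis in the $y_i=x_i^2$ is unitriangular and hence $\{P_\lambda\}$ is a basis. In this basis, $\mathrm{ev}$ is represented by the square matrix $M=[P_\lambda(\mu+\rho)]_{\mu,\lambda\in\Lambda^d}$. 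By Theorem \ref{Ok-poly}(3), $M_{\mu,\lambda}=0$ unless $\lambda\subseteq\mu$, which in particular forces $|\lambda|\le|\mu|$. Ordering $\Lambda^d$ by any total order refining $\lambda\mapsto|\lambda|$ makes $M$ upper triangular, with diagonal entries $P_\lambda(\lambda+\rho)$.

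The remaining step, and the main obstacle, is the non-vanishing $P_\lambda(\lambda+\rho)\neq 0$ as an element of $\mathbb{F}=\mathbb{Q}(\tau,\alpha)$. I would extract this from the combinatorial formula (\ref{koor-fo}): substituting $x_i=\lambda_i+\rho_i$ and analyzing which reverse tableaux can contribute, one finds that only the ``natural'' tableau $T(i,j)=i$ survives, and the sum collapses to an explicit product (up to the factor $k_\lambda$) of generically nonzero linear factors in $\tau,\alpha$. This is the standard Okounkov diagonal evaluation and can alternatively be cited directly from \cite{Okounkov} or \cite{Koor}. With $M$ upper triangular and nonzero on the diagonal, $M$ is invertible, $\mathrm{ev}$ is an isomorphism, and the proposition follows.
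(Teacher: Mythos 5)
Your argument is essentially the paper's own proof: both rest on the dimension count $\dim\mathcal{Q}_{2d}=\left\vert\Lambda^{d}\right\vert$ and on the fact that the matrix $\left[P_{\lambda}\left(\mu+\rho\right)\right]$ is triangular with nonzero diagonal for any total order refining $\left\vert\lambda\right\vert$, the only cosmetic difference being that you check injectivity of the evaluation map while the paper inverts the same triangular system to get surjectivity. You are also right that the non-vanishing $P_{\lambda}\left(\lambda+\rho\right)\neq0$ is the one input not literally listed in Theorem \ref{Ok-poly}; the paper uses it implicitly (it is the standard principal specialization from \cite{Okounkov,Koor}), so your explicit attention to that point is appropriate.
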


\begin{proof}
Let $\mathcal{V}_{d}$ be the vector space of functions on the set $\Lambda
^{d}+\rho$, then we need to show that the restriction map
\begin{equation}
res:\mathcal{Q}_{2d}\rightarrow\mathcal{V}_{d} \label{=res}%
\end{equation}
is an isomorphism. Now $\mathcal{Q}_{2d}$ has an explicit basis given by the
set
\[
\left\{  \tilde{m}_{\lambda}:\lambda\in\Lambda^{d}\right\}  ,\quad\tilde
{m}_{\lambda}=\sum_{\sigma\in S_{n}}x_{\sigma\left(  1\right)  }^{2\lambda
_{1}}\cdots x_{\sigma\left(  n\right)  }^{2\lambda_{n}};
\]
thus both sides of (\ref{=res}) have the same dimension $\left\vert
\Lambda^{d}\right\vert $. Since $res$ is linear it suffices to prove that it
is surjective. For this we consider the \textquotedblleft$\delta
$-basis\textquotedblright\ of $\mathcal{V}_{d}$ given by%
\[
\delta_{\mu}\left(  \lambda+\rho\right)  =\delta_{\lambda\mu}\text{ for all
}\lambda,\mu\in\Lambda^{d}.
\]
Fix a total order on $\Lambda^{d}$ compatible with $\left\vert \lambda
\right\vert \geq\left\vert \mu\right\vert $. The restrictions
of Okounkov
polynomials
$\left\{
res\left(  P_{\mu}\right)  :\mu\in\Lambda^{d}\right\}  $ 
 belong to
$\mathcal{V}_{d}$,  and by Theorem \ref{Ok-poly} 
their expression in terms of the
$\delta$-basis is upper triangular with non-zero diagonal entries. Thus we can
invert this to write $\delta_{\mu}$ in terms of $res\left(  P_{\mu}\right)  $.
This proves the Proposition.
\end{proof}

\begin{theorem}
\label{char-oko} The Okounkov plynomial 
$P_{\lambda}(x;\tau,\alpha)$ is the unique polynomial in
$\mathcal{Q}$ satisfying

\begin{enumerate}
\item $P_{\lambda}$ has degree $\leq2|\lambda|.$

\item The coefficient of $x_{1}^{2\lambda_{1}}\cdots x_{n}^{2\lambda_{n}}$ in
$P_{\lambda}$ is $1$.

\item $P_{\lambda}(\mu+\rho)=0\,$ if \thinspace$\left\vert \mu\right\vert
\leq\left\vert \lambda\right\vert $ and $\mu\neq\lambda.$
\end{enumerate}
\end{theorem}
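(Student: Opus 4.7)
My plan is to reduce uniqueness to the combination of Theorem \ref{Ok-poly} (existence plus the three listed properties, including the vanishing in (3)) and Proposition \ref{th:char} (which says polynomials in $\mathcal{Q}_{2d}$ are determined by their values on $\Lambda^d+\rho$). Concretely, let $d=|\lambda|$ and suppose $P\in\mathcal{Q}$ satisfies the three listed conditions. Condition (1) places $P$ in $\mathcal{Q}_{2d}$, so I will study the difference $R=P-P_\lambda\in\mathcal{Q}_{2d}$ and show that $R$ vanishes identically.

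First, I would verify that $R$ vanishes on $(\Lambda^d+\rho)\setminus\{\lambda+\rho\}$. On the $P$-side this is immediate from condition (3). On the $P_\lambda$-side I need $P_\lambda(\mu+\rho)=0$ for every $\mu\in\Lambda^d$ with $\mu\neq\lambda$. By Theorem \ref{Ok-poly}(3) this vanishing holds unless $\lambda\subseteq\mu$; but $|\mu|\leq|\lambda|$ combined with $\lambda\subseteq\mu$ forces $|\mu|=|\lambda|$ and then $\mu=\lambda$, contradicting $\mu\neq\lambda$.

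Next, I would invoke Proposition \ref{th:char} to argue that the subspace of $\mathcal{Q}_{2d}$ consisting of polynomials vanishing on $(\Lambda^d+\rho)\setminus\{\lambda+\rho\}$ is one-dimensional and is spanned by $P_\lambda$. Indeed, $P_\lambda$ itself lies in this subspace (by the calculation just given), and the upper-triangular-with-nonzero-diagonal structure established in the proof of Proposition \ref{th:char} supplies both the one-dimensionality of the subspace and the nonvanishing $P_\lambda(\lambda+\rho)\neq 0$. It follows that $R=cP_\lambda$ for some scalar $c$.

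Finally, I would compare the coefficient of the monomial $x_1^{2\lambda_1}\cdots x_n^{2\lambda_n}$ on both sides. By hypothesis (2) and Theorem \ref{Ok-poly}(2), this coefficient equals $1$ in both $P$ and $P_\lambda$, hence it equals $0$ in $R$; but in $cP_\lambda$ it equals $c$, so $c=0$ and therefore $P=P_\lambda$. The step I expect to be the main obstacle is the bookkeeping in the second paragraph: one must carefully exclude the possibility that two distinct partitions of the same total size satisfy $\lambda\subseteq\mu$, since otherwise $P_\lambda(\mu+\rho)$ could fail to vanish and one could not transfer the vanishing in (3) from $P$ to the difference $R$.
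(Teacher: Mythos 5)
Your argument is correct and is exactly the paper's intended proof: the paper disposes of this theorem in one line by citing Theorem \ref{Ok-poly} together with Proposition \ref{th:char}, and your write-up simply fills in the details of that reduction (the difference $P-P_\lambda$ vanishes on $\Lambda^{d}+\rho$ except possibly at $\lambda+\rho$, the restriction isomorphism forces it to be a multiple of $P_\lambda$, and the normalization of the leading coefficient kills that multiple). The bookkeeping you flag as the main obstacle --- that $\lambda\subseteq\mu$ together with $|\mu|\leq|\lambda|$ forces $\mu=\lambda$ --- is handled correctly.
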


\begin{proof}
This follows immediately from Theorem \ref{Ok-poly} and Proposition
\ref{th:char}.
\end{proof}

\subsection{Explicit formulas for $\tau=1$}

In this section we give a determinantal formula for the Okounkov polynomials
when $\tau=1$. This involves the one variable polynomials discussed in the
next result.

\begin{lemma}
For $n=1$ and $l\in\mathbb{Z}_{+}$ the Okounkov polynomial is given by%
\begin{equation}
p_{l}\left(x;\alpha\right)=\prod_{i=0}^{l-1}\left(x^{2}-(i+\alpha)^{2}\right). \label{=pl}%
\end{equation}

\end{lemma}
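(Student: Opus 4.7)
The plan is to invoke Theorem \ref{char-oko} directly, since that result characterizes $P_\lambda(x;\tau,\alpha)$ uniquely by three conditions, and for $n=1$ the parameter $\tau$ disappears from the setup. Specializing to $n=1$, a partition is just a nonnegative integer $l$, the shift is $\delta=(0)$, so $\rho=\alpha$, and $\mathcal Q$ is the ring of even polynomials in a single variable $x$.

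I would then verify that the polynomial
\[
p_l(x;\alpha)=\prod_{i=0}^{l-1}\bigl(x^2-(i+\alpha)^2\bigr)
\]
satisfies each of the three characterizing conditions. It is manifestly even in $x$, hence lies in $\mathcal Q$; it has degree exactly $2l$, giving condition (1); expanding the product, the coefficient of $x^{2l}$ is $1$, giving condition (2). For condition (3), one fixes an integer $\mu$ with $0\le \mu\le l$, $\mu\neq l$, and evaluates at $\mu+\rho=\mu+\alpha$: the factor corresponding to $i=\mu$ contributes $(\mu+\alpha)^2-(\mu+\alpha)^2=0$, so $p_l(\mu+\alpha;\alpha)=0$.

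By the uniqueness clause of Theorem \ref{char-oko}, we conclude $P_l(x;\tau,\alpha)=p_l(x;\alpha)$, independently of $\tau$ (as expected, since in one variable the combinatorial formula \eqref{koor-fo} involves a single column of boxes and the only reverse tableau has weight $T(s)=1$, eliminating any $\tau$-dependence through the $\psi_T$ factors).

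There is no genuine obstacle here; the step that requires the tiniest care is simply making sure that the index set of vanishing points in Theorem \ref{char-oko} reduces correctly for $n=1$ to the integers $\{0,1,\ldots,l-1\}$, which is immediate from $\rho=\alpha$ and $|\mu|=\mu$.
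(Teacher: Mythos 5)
Your proposal is correct and follows essentially the same route as the paper: verifying the three conditions of Theorem \ref{char-oko}, with the vanishing condition checked by locating the factor that vanishes at $\mu+\alpha$ (the paper writes this as $p_l(m+\alpha)=\prod_{i=0}^{l-1}(m+2\alpha+i)\prod_{i=0}^{l-1}(m-i)$, which is the same observation).
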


\begin{proof}
We verify that $p_{l}(x;\alpha)$ satisfies the three conditions of Theorem
\ref{char-oko}. The first two are immediate, while for the third we need to
show
\begin{equation}
p_{l}\left(m+\alpha\right)=0\text{ for }m=0,1,\ldots,l-1, \label{=pl-vanish}%
\end{equation}
which follows from the formula $p_{l}(m+\alpha)=\prod_{i=0}^{l-1}%
(m+2\alpha+i)\prod_{i=0}^{l-1}(m-i)$.
\end{proof}

For $\lambda$ in $\Lambda$ we define an $n\times n$ matrix $A_{\lambda}$ and
its determinant $a_{\lambda}$ as follows%
\[
A_{\lambda}\left(  x;\alpha\right)  =\left(  p_{\lambda_j}\left(  x_{i}%
;\alpha\right)  \right)  _{1\leq i,j\leq n},\quad a_{\lambda}%
=\operatorname{det}A_{\lambda}.
\]
For $\delta=\left(  n-1,\ldots,1,0\right)  $ it is easy to see that
$a_{\delta}$ is the Vandermonde determinant $\prod_{i<j}\left(  x_{i}%
^{2}-x_{j}^{2}\right)  ,$ and is thus independent of $\alpha$.

\begin{theorem}
\label{thm-b} For $\tau=1$ the Okounkov polynomials are given by
\begin{equation}
P_{\lambda}\left(x;1,\alpha\right)=\frac{a_{\lambda+\delta}\left(  x;\alpha\right)
}{a_{\delta}\left(  x\right)  } \label{=P_lam1}%
\end{equation}

\end{theorem}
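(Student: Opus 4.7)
The plan is to verify that the right-hand side $Q_\lambda(x) := a_{\lambda+\delta}(x;\alpha)/a_\delta(x)$ satisfies the three conditions of Theorem \ref{char-oko} that uniquely characterize $P_\lambda(x;1,\alpha)$.

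First I would establish that $Q_\lambda$ is a well-defined element of $\mathcal{Q}_{2|\lambda|}$. The polynomial $p_l(x;\alpha)$ is even in $x$, so $A_{\lambda+\delta}(x;\alpha)$ has entries that are polynomials in $x_i^2$. Permuting the $x_i$ permutes the rows of the matrix, so $a_{\lambda+\delta}$ and $a_\delta$ are both alternating in the variables $y_i = x_i^2$; in particular, $a_{\lambda+\delta}$ vanishes whenever $x_i^2 = x_j^2$ and is therefore divisible by the Vandermonde $a_\delta = \prod_{i<j}(x_i^2 - x_j^2)$. The quotient $Q_\lambda$ is thus a polynomial, symmetric and even, hence in $\mathcal{Q}$. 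The degree count is immediate: $p_{\lambda_j+\delta_j}$ has degree $2(\lambda_j+\delta_j)$, so $a_{\lambda+\delta}$ has degree $2|\lambda| + 2|\delta|$, and dividing by $a_\delta$ (degree $2|\delta|$) gives degree $2|\lambda|$.

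For the normalization, I would extract the top-degree part by keeping only the leading terms $x_i^{2(\lambda_j+\delta_j)}$ of each $p_{\lambda_j+\delta_j}(x_i;\alpha)$. This gives the classical Jacobi alternant $\det(x_i^{2(\lambda_j+\delta_j)})$, and dividing by $a_\delta$ yields the Schur polynomial $s_\lambda(x_1^2,\ldots,x_n^2)$. The coefficient of $x_1^{2\lambda_1}\cdots x_n^{2\lambda_n}$ in $s_\lambda(y_1,\ldots,y_n)$ equals the number of semistandard Young tableaux of shape $\lambda$ and content $\lambda$, which is $1$. Lower-order terms in the $p_{\lambda_j+\delta_j}$ cannot contribute to this top monomial, so condition (2) holds.

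The core of the argument is the vanishing condition (3). Writing $\rho_i = \delta_i + \alpha$ (since $\tau=1$), the entry $(i,j)$ of $A_{\lambda+\delta}$ evaluated at $x = \mu + \rho$ is $p_{\lambda_j+\delta_j}((\mu+\delta)_i + \alpha;\alpha)$, which by \eqref{=pl-vanish} vanishes whenever $(\mu+\delta)_i < (\lambda+\delta)_j$. Expanding $a_{\lambda+\delta}(\mu+\rho;\alpha) = \sum_\sigma \mathrm{sgn}(\sigma) \prod_i p_{(\lambda+\delta)_{\sigma(i)}}((\mu+\delta)_i+\alpha;\alpha)$, a permutation $\sigma$ contributes nonzero only if $(\mu+\delta)_i \geq (\lambda+\delta)_{\sigma(i)}$ for every $i$. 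Summing these inequalities gives $|\mu+\delta| \geq |\lambda+\delta|$, i.e.\ $|\mu| \geq |\lambda|$; combined with the assumption $|\mu| \leq |\lambda|$, this forces equality in every inequality, so $\mu+\delta$ is a permutation of $\lambda+\delta$. Since both are strictly decreasing, this compels $\mu = \lambda$, contradicting $\mu \neq \lambda$. Hence every term in the determinant vanishes, establishing condition (3).

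The main subtlety is this combinatorial vanishing argument, which relies crucially on the fact that $\lambda+\delta$ and $\mu+\delta$ are \emph{strictly} decreasing sequences — the shift by $\delta$ is what converts the condition $|\mu| \leq |\lambda|$ into rigid pigeonhole behavior. With the three conditions verified, Theorem \ref{char-oko} identifies $Q_\lambda$ with $P_\lambda(x;1,\alpha)$, completing the proof.
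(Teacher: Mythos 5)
Your proposal is correct and follows essentially the same route as the paper: both verify the three characterizing conditions of Theorem \ref{char-oko}, identify the top-degree part with the Schur polynomial $s_{\lambda}(x_1^2,\ldots,x_n^2)$, and prove the vanishing condition by expanding the determinant at $x=\mu+\rho$, using (\ref{=pl-vanish}) to force $(\mu+\delta)_i\geq(\lambda+\delta)_{\sigma(i)}$ and then the summation/pigeonhole argument with the strictly decreasing sequences $\lambda+\delta$, $\mu+\delta$. Your added remarks on why the quotient is a genuine polynomial in $\mathcal{Q}$ are a harmless elaboration of what the paper treats as obvious.
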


\begin{proof}
The proof is similar to \cite{Knop-Sahi}. Let us denote the right side of
(\ref{=P_lam1}) by $R_{\lambda}$. We will show that $R_{\lambda}$ satisfies
the conditions of Theorem \ref{char-oko}. The first condition is obvious.
Also, the top degree component of $R_{\lambda}$ is
\[
\frac{\operatorname{det}\left(  x_{i}^{2(\lambda_{j}+\delta_{j})}\right)
}{\operatorname{det}\left(  x_{i}^{2\delta_{j}}\right)  }=s_{\lambda}%
(x_{1}^{2},\ldots,x_{n}^{2})
\]
where $s_{\lambda}$ is the Schur polynomial; this implies the second
condition. To finish the proof it suffices to prove the third condition in the
form
\begin{equation}
\left\vert
 \mu\right
\vert \leq
\left\vert \lambda\right
\vert \text{ and
}R_{\lambda}\left(\mu+\rho\right)\neq0\,\implies\mu=\lambda\, \label{=m=l}%
\end{equation}

Suppose $\mu$ in $\Lambda$ satisfies the assumptions of (\ref{=m=l}). Since
$\mu+\rho$ has distinct components, the denominator in (\ref{=P_lam1}) is a
nonzero Vandermonde determinant, and so the numerator must be non zero.
Expanding the numerator we get
\[
\sum\nolimits_{\sigma\in S_{n}}(-1)^{\sigma}\prod\nolimits_{j}p_{\lambda
_{j}+\delta_{j}}\left(\mu_{\sigma(j)}+\delta_{\sigma(j)}+\alpha;\alpha\right)\neq0,
\]
and at least one term must be nonzero. Thus for some $\sigma\in S_{n}$ we must
have
\[
p_{\lambda_{j}+\delta_{j}}\left(\mu_{\sigma(j)}+\delta_{\sigma(j)}+\alpha
;\alpha\right)\neq0\text{ for all }j
\]
By (\ref{=pl-vanish}) we get
\begin{equation}
\mu_{\sigma(j)}+\delta_{\sigma(j)}\geq\lambda_{j}+\delta_{j}\text{ for all }j.
\label{=sig-j}%
\end{equation}
Summing this over $j$ we obtain
\begin{equation}
\left\vert \mu\right\vert +\left\vert \delta\right\vert \geq\left\vert
\lambda\right\vert +\left\vert \delta\right\vert \label{=l-m}%
\end{equation}
If the inequality in (\ref{=sig-j}) is strict for some $j$ then strict
inequality holds in (\ref{=l-m}), which contradicts the assumption that
$\left\vert \mu\right\vert \leq\left\vert \lambda\right\vert $. Thus equality
must hold in (\ref{=sig-j}) for all $j$, which implies%
\[
\sigma\left(  \mu+\delta\right)  =\lambda+\delta.
\]
Since $\lambda+\delta$ and $\mu+\delta$ are strictly decreasing sequences,
this forces $\sigma$ to be the identity permutation, and we get $\mu=\lambda$
as desired.
\end{proof}

\subsection{Explicit formulas for special partitions}

In this section we give explicit formulas for Okounkov polynomials
${P}_{\lambda}\left(  x;\tau,\alpha\right)  $ for certain special partitions
$\lambda$. For the reader's convenience we recall the definition of $\rho$
\[
\rho_{i}=\tau\delta_{i}+\alpha=\tau\left(  n-i\right)  +\alpha.
\]

\begin{theorem}
\label{egg} $P_{1^{j}}\left(  x;\tau,\alpha\right)  $ is the coefficient of
$t^{j}$ in the series expansion of%
\begin{equation}
\frac{\prod_{i=1}^{n}\left(  1+tx_{i}^{2}\right)  }{\prod_{i=j}^{n}\left(
1+t\rho_{i}^{2}\right)  } \label{=expansion}%
\end{equation}

\end{theorem}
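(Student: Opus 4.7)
The plan is to appeal to the uniqueness result of Theorem \ref{char-oko}: I will denote the right-hand side by
\[
Q_{j}(x) = [t^{j}]\, F_{j}(x,t), \qquad F_{j}(x,t) = \frac{\prod_{i=1}^{n}(1+tx_{i}^{2})}{\prod_{i=j}^{n}(1+t\rho_{i}^{2})},
\]
and verify that $Q_{j}\in\mathcal{Q}$ satisfies the three characterizing conditions of $P_{1^{j}}$. Symmetry and evenness in the $x_{i}$ are immediate since only $x_{i}^{2}$ appears in $F_{j}$.

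For the degree and leading coefficient, I would expand
\[
\frac{1}{\prod_{i=j}^{n}(1+t\rho_{i}^{2})} = \sum_{k\geq0} c_{k} t^{k}, \quad c_{0}=1,
\]
and pair with $\prod_{i=1}^{n}(1+tx_{i}^{2}) = \sum_{k} e_{k}(x_{1}^{2},\ldots,x_{n}^{2})\,t^{k}$ to obtain
\[
Q_{j}(x) = \sum_{k=0}^{j} c_{j-k}\, e_{k}(x_{1}^{2},\ldots,x_{n}^{2}).
\]
Hence $\deg Q_{j}\leq 2j$, and the coefficient of $x_{1}^{2}\cdots x_{j}^{2}$ comes entirely from the $k=j$ term (with $c_{0}=1$), yielding $1$ as required.

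The crux is the vanishing condition: I must show $Q_{j}(\mu+\rho)=0$ for every $\mu\in\Lambda^{j}$ with $\mu\neq 1^{j}$. The key combinatorial observation is that any such $\mu$ has at most $j-1$ nonzero parts: if $\mu$ had $\geq j$ nonzero parts, each being $\geq 1$, then $|\mu|\geq j$, forcing $|\mu|=j$ and every nonzero part equal to $1$, i.e.\ $\mu=1^{j}$. So $\mu_{i}=0$ for all $i\geq j$, whence $(\mu_{i}+\rho_{i})^{2}=\rho_{i}^{2}$ for $i\geq j$ and the factors corresponding to $i=j,\ldots,n$ in the numerator cancel the entire denominator of $F_{j}(\mu+\rho,t)$. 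The result is
\[
F_{j}(\mu+\rho,t) = \prod_{i=1}^{j-1}\bigl(1+t(\mu_{i}+\rho_{i})^{2}\bigr),
\]
a polynomial of degree $j-1$ in $t$, so the coefficient of $t^{j}$ vanishes. With all three conditions verified, Theorem \ref{char-oko} gives $Q_{j}=P_{1^{j}}$.

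I do not expect a substantive obstacle here: the whole argument rests on spotting that the denominator of $F_{j}$ was engineered precisely to absorb the trailing $n-j+1$ factors $(1+t\rho_{i}^{2})$ coming from evaluation at $\mu+\rho$ whenever $\mu$ has short support, and that $\mu\neq 1^{j}$ is exactly the condition guaranteeing this. Once this is noticed, everything reduces to the uniqueness statement already established.
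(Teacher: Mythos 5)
Your proposal is correct and follows essentially the same route as the paper: both verify the three characterizing conditions of Theorem \ref{char-oko}, with the key step being that $|\mu|\leq j$ and $\mu\neq 1^{j}$ force $\mu_{i}=0$ for $i\geq j$, so the denominator cancels against the trailing numerator factors and leaves a polynomial of degree $<j$ in $t$. The only difference is that you spell out the degree and leading-coefficient checks (via the expansion in elementary symmetric polynomials) which the paper dismisses as obvious.
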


\begin{proof}
Let $R_{j}$ denote the coefficient of $t^{j}$ in (\ref{=expansion}). We will
prove that $R_{j}$ satisfies the three conditions of Theorem \ref{char-oko}
for $\lambda=1^{j}$. The first two conditions are obvious. For the third
condition it suffices to prove that in the expression%
\begin{equation}
\frac{\prod_{i=1}^{n}\left(  1+t\left(  \mu_{i}+\rho_{i}\right)  ^{2}\right)
}{\prod_{i=j}^{n}\left(  1+t\rho_{i}^{2}\right)  } \label{=prod-exp}%
\end{equation}
the coefficient of $t^{j}$is $0$ if $\mu$ satisfies
\begin{equation}
\left\vert \mu\right\vert \leq j,\quad\mu\neq1^{j}. \label{=assume-mu}%
\end{equation}

However under assumption (\ref{=assume-mu}) we have $\mu_{j}=\mu_{j+1}%
=\cdots=\mu_{n}=0,$ and thus
\[
\mu_{i}+\rho_{i}=\rho_{i}\text{ for }i=j,\ldots,n\text{ }%
\]
It follows that in the expression (\ref{=prod-exp}) the denominator cancels
completely, leaving behind a polynomial in $t$ of degree $<j$. Hence the
coefficient of $t^{j}$ is $0$.
\end{proof}

\begin{corollary}
\label{egg-cor}
We have
\begin{equation}
P_{1^{j}}\left(x;\tau,\alpha\right)=\sum_{i_{1}<\cdots<i_{j}}\prod\nolimits_{k=1}%
^{j}\left(x_{i_{k}}^{2}-\rho_{i_{k}+j-k}^{2}\right). \label{eq:ks}%
\end{equation}
\end{corollary}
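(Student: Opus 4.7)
My plan is to prove Corollary \ref{egg-cor} by showing that the polynomial on the right-hand side of (\ref{eq:ks}), call it $R_j(x)$, satisfies the three defining conditions of Theorem \ref{char-oko} for the partition $\lambda = 1^j$, whence $R_j = P_{1^j}$. This sidesteps a direct coefficient extraction from the generating function of Theorem \ref{egg}.

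Conditions (ii) and (iii) are the cleanest parts. For (ii), the top-degree part of $R_j$ is $\sum_{i_1<\cdots<i_j} x_{i_1}^2\cdots x_{i_j}^2 = e_j(x_1^2,\ldots,x_n^2)$, whose $x_1^2 x_2^2 \cdots x_j^2$ coefficient equals $1$. For (iii), suppose $\mu \in \Lambda$ has $|\mu| \leq j$ and $\mu \neq 1^j$. A short case analysis on partitions of $\leq j$ shows $\ell(\mu) < j$, so $\mu_k = 0$ for all $k \geq j$. In any term of $R_j(\mu+\rho)$, the factor indexed by $k = j$ is $(\mu_{i_j}+\rho_{i_j})^2 - \rho_{i_j}^2$. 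Since $i_1 < \cdots < i_j$ forces $i_j \geq j > \ell(\mu)$, we have $\mu_{i_j} = 0$, making this factor vanish. Hence every term is zero and $R_j(\mu+\rho) = 0$.

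The main obstacle is verifying (i), namely that $R_j \in \mathcal{Q}_{2j}$. Evenness and the degree bound $\deg R_j \leq 2j$ are immediate, but $S_n$-symmetry is not manifest from (\ref{eq:ks}). I would prove symmetry by a pairing argument: for an adjacent transposition $\sigma = (p,p+1)$, split the subsets $S = \{i_1 < \cdots < i_j\}$ into three groups according to $|S \cap \{p,p+1\}|$. When both $p, p+1 \in S$ occupy consecutive positions $m, m+1$, the corresponding pair of factors is $(x_p^2 - \rho_{p+j-m}^2)(x_{p+1}^2-\rho_{p+j-m}^2)$, already symmetric in $x_p, x_{p+1}$. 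When neither belongs to $S$, the term trivially commutes with $\sigma$. When exactly one does, pair $S$ with $\sigma S$ and factor out the common product $A = \prod_{k \neq m}(x_{i_k}^2 - \rho_{i_k+j-k}^2)$; a two-line computation shows $T_S(x)+T_{\sigma S}(x) = A\bigl[x_p^2 + x_{p+1}^2 - \rho_{p+j-m}^2 - \rho_{p+1+j-m}^2\bigr]$, which is visibly $\sigma$-invariant. Alternatively one could cite the classical fact that $R_j(x)$ is precisely the factorial elementary symmetric polynomial $e_j(x_1^2,\ldots,x_n^2 \mid \rho_1^2, \rho_2^2, \ldots)$, whose $S_n$-symmetry is standard in the theory of factorial Schur functions. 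With condition (i) established, Theorem \ref{char-oko} yields $R_j = P_{1^j}$ and the formula follows.
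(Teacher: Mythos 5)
Your proof is correct, but it takes a genuinely different route from the paper's. The paper obtains (\ref{eq:ks}) as a ``direct computation'' from Theorem \ref{egg}: one expands the generating function $\prod_{i=1}^{n}(1+tx_i^2)\big/\prod_{i=j}^{n}(1+t\rho_i^2)$ and extracts the coefficient of $t^j$ (with \cite[Proposition 3.1]{Knop-Sahi} cited as an alternative). You instead apply the uniqueness criterion of Theorem \ref{char-oko} directly to the closed-form sum $R_j$, which makes the corollary logically independent of Theorem \ref{egg}. The trade-off sits exactly where you locate it: in the generating-function approach the $S_n$-symmetry of the answer is free, since $\prod_i(1+tx_i^2)$ is manifestly symmetric and the denominator does not involve $x$, whereas for the bare sum you must verify symmetry by hand. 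Your pairing argument for an adjacent transposition $\sigma=(p,p+1)$ does this correctly: the only nontrivial case is when exactly one of $p,p+1$ lies in $S$, and there $p$ and $p+1$ occupy the same position $m$ in $S$ and $\sigma S$ respectively (because the remaining elements of $S$ avoid $\{p,p+1\}$), so the two terms share the common factor $A$ and their sum $A\bigl[x_p^2+x_{p+1}^2-\rho_{p+j-m}^2-\rho_{p+1+j-m}^2\bigr]$ is visibly invariant. Your checks of conditions (ii) and (iii) are also sound; in particular $|\mu|\leq j$ and $\mu\neq 1^j$ force $\ell(\mu)<j$, and since $i_j\geq j$ the factor indexed by $k=j$ vanishes in every term. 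In substance your argument reruns, for the explicit sum, the same three-condition verification that the paper performs for the generating function in its proof of Theorem \ref{egg}.
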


\begin{proof}
This follows by a direct computation from Theorem \ref{egg}. For an
alternative argument, see {\cite[Proposition 3.1]{Knop-Sahi}.}
\end{proof}

We next give an explicit formula for $P_{\lambda}\left(  x;\tau,\alpha\right)
$ for $\lambda=l^n:=l 1^{n}=\left(  l,l,\ldots,l\right)  .$

\begin{proposition}
We have%
\begin{equation}
P_{l^{n}}
\left(  x;\tau,\alpha\right)  =\prod\nolimits_{i=0}^{l-1}%
\prod\nolimits_{j=1}^{n}\left[  x_{j}^{2}-(i+\alpha^{2}\right]  \label{p-l1r}%
\end{equation}

\end{proposition}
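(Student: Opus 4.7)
The plan is to verify that the right-hand side
\[
R\left(x\right)=\prod\nolimits_{i=0}^{l-1}\prod\nolimits_{j=1}^{n}\left[x_{j}^{2}-(i+\alpha)^{2}\right]
\]
satisfies the three characterizing conditions of Theorem \ref{char-oko} for the partition $\lambda=l^{n}$, and then invoke uniqueness. The first two conditions are immediate: $R$ lies in $\mathcal{Q}$ since each factor is even in $x_{j}$ and the product is symmetric in the $x_{j}$; its degree is exactly $2ln=2\left\vert l^{n}\right\vert$; and the coefficient of $x_{1}^{2l}\cdots x_{n}^{2l}$ obtained from the top terms $x_{j}^{2}$ of each factor is $1$.

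The substantive step is the vanishing condition: $R\left(\mu+\rho\right)=0$ whenever $\mu\in\Lambda$ satisfies $\left\vert \mu\right\vert \leq ln$ and $\mu\neq l^{n}$. The key observation is that $\rho_{n}=\tau\cdot0+\alpha=\alpha$, so the factor indexed by $j=n$ reads
\[
\left[\left(\mu_{n}+\alpha\right)^{2}-\left(i+\alpha\right)^{2}\right],
\]
which vanishes as soon as $\mu_{n}\in\{0,1,\ldots,l-1\}$ (take $i=\mu_{n}$).

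Thus it suffices to establish the combinatorial claim: if $\mu\neq l^{n}$ and $\left\vert \mu\right\vert \leq ln$, then $\mu_{n}\leq l-1$. If instead $\mu_{n}\geq l$, then the partition inequalities $\mu_{1}\geq\cdots\geq\mu_{n}\geq l$ force $\left\vert \mu\right\vert \geq ln$, and equality then forces $\mu_{j}=l$ for every $j$, contradicting $\mu\neq l^{n}$. This settles condition (3) of Theorem \ref{char-oko}, and uniqueness gives $R=P_{l^{n}}$.

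I do not anticipate a real obstacle here; the only thing to notice is the coincidence $\rho_{n}=\alpha$, which is what makes the last row of factors align perfectly with the vanishing points. The argument is in the same spirit as the proof of Theorem \ref{egg}: one finds, for each forbidden $\mu$, a coordinate of $\mu+\rho$ at which a single explicit factor of the candidate polynomial vanishes.
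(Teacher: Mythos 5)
Your proposal is correct and follows essentially the same route as the paper's own proof: both verify the three conditions of Theorem \ref{char-oko} and settle the vanishing condition by noting that $\left\vert \mu\right\vert \leq ln$ with $\mu\neq l^{n}$ forces $\mu_{n}<l$, so that the factor with $j=n$ and $i=\mu_{n}$ vanishes because $\rho_{n}=\alpha$. Your write-up just makes the combinatorial step $\mu_{n}\geq l\Rightarrow\mu=l^{n}$ slightly more explicit than the paper does.
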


\begin{proof}
It suffices to show that right side of (\ref{p-l1r}) satisfies the three
conditions of Theorem \ref{char-oko} for $\lambda=l^{n}.$ The first two
conditions are obvious. For the third it suffices to show that if%
\begin{equation}
\left\vert \mu\right\vert \leq nl,\quad\mu\neq l^{n}\label{=mu1}%
\end{equation}
then we have%
\begin{equation}
\prod\nolimits_{i=0}^{l-1}\prod\nolimits_{j=1}^{n}\left[  \left(  \mu_{j}%
+\rho_{j}\right)  ^{2}-(i+\alpha)^{2}\right]  =0.\label{=mu2}%
\end{equation}

But if $\mu$ satisfies (\ref{=mu1}) then we have $\mu_{n}<l$, which implies
\[
\mu_{n}+\rho_{n}=i+\rho_{n}=i+\alpha
\]
for some $i=0,1\ldots,l-1$, Thus one of the factors of (\ref{=mu2}) is $0.$
\end{proof}

\section{Properties for the eigenvalues of Shimura Operators}

\label{section-pro} We shall prove that the eigenvalues, i.e. the
Harish-Chandra homomorphism, of the Shimura operators are the Okounkov polynomials.

\subsection{Vanishing properties }

Before turning to our main results we prove an elementary lemma. Let $p$ be a
non-negative integer. Recall the Schmid's component $W_{\nu}$ of $S\left(
\mathfrak{\mathfrak{p}^{+}}\right)  $ and the $\mathfrak{g}$-representation
$V_{\lambda,p}$ in Lemma \ref{schli}. Recall the notation $W_{\nu,p}=W_{\nu
}\otimes\mathbb{C}_{\frac{p}{2}}$.

\begin{lemm+}
\label{lemma3.1} If $Hom_{K}\left(  W_{\nu,p},V_{\lambda,p}\right)  \neq0$
then $\nu\subseteq\lambda$.

\end{lemm+}

\begin{proof}
The Lie algebra $\mathfrak{k}+\mathfrak{p}^{-}$ is the parabolic subalgebra
opposite to $\mathfrak{k }+\mathfrak{p}^{+}$. Let $v_{\lambda, p}$ be a
non-zero highest weight vector in the representation space $(V_{\lambda, p},
\pi)$ of $\mathfrak{g}$. Then by the PBW theorem we have
\begin{equation}
\label{PBW}V_{\lambda, p}=\pi\left(U\left(  \mathfrak{p}^{-}\right)  U\left(
\mathfrak{k}\right)  \right) v_{\lambda, p} =\pi(U\left(  \mathfrak{p}^{-}\right)  )
\pi \left(U\left(  \mathfrak{k}\right)  \right) v_{\lambda, p}.
\end{equation}
The space $
\pi (U\left(  \mathfrak{k}\right)  ) v_{\lambda, p}=
\pi\left(  \mathfrak{k}\right)  v_{\lambda, p}=W_{\lambda, p} $ is
a highest weight representation of $\mathfrak{k}$ with highest weight
$\sum_{j=1}^{n}(\lambda_{j}+\frac p2)\gamma_{j} $ when restricted to
$\mathfrak{t}_{-}$. If $Hom_{K}\left(  W_{\nu, p}, V_{\lambda, p}\right)
\neq0$, equivalently $W_{\nu, p} $ occurs in $\pi\left(U\left(  \mathfrak{p}%
^{-}\right)  \right)W_{\lambda, p} $ then $\sum_{j=1}^{n}\left(\nu_{j}+\frac
p2\right)\gamma_{j}$ must be of the form $\sum_{j} \left(\nu_{j}+\frac p2\right) \gamma_{j} =
\sum_{j}\left(\lambda_{j}+\frac p2+\mu_{j}\right) \gamma_{j} $ where $\sum_{j} \mu
_{j}\gamma_{j} $ is a weight of $U\left(  \mathfrak{p}^{-}\right)  $, by  \cite[Theorem 20.2]{Hum}. But then any such $\mu$ is of the form
$-\sum_{i}\mu_{i}\gamma_{i}$ for some $\mu_{i}\geq0$, proving our claim.
\end{proof}



\begin{theorem}
\label{thm-1} Let 
$\eta_p(\mathcal L_{\mu})$
 be the Harish-Chandra homomorphism of 
$\mathcal L_{\mu}$ defined in  (\ref{eq:eta-p}),
then
\[
\eta_{p}\left(  \mathcal{L}_{\mu}\right)  \left(  \lambda+\frac{p}{2}%
+\rho\right)  =0\text{ unless }\mu\subseteq\lambda.
\]

\end{theorem}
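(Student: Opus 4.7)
My plan is to prove the theorem by showing directly that $\mathcal{L}_{\mu}$ annihilates a spherical vector of $V_{\lambda,p}$ whenever $\mu\not\subseteq\lambda$. Since $\mathcal{L}_{\mu}$ acts on the irreducible $\mathfrak{g}$-module $V_{\lambda,p}$ as the scalar $\eta_{p}(\mathcal{L}_{\mu})(\lambda+p/2+\rho)$ by (\ref{eq:eta-p}), annihilating any one nonzero vector will force that scalar to vanish, which is exactly the claim.

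First I would fix a nonzero $(K,\iota^{p})$-spherical vector $v_{p}\in V_{\lambda,p}$ (which exists by Lemma \ref{schli}) and exploit the orthonormal form $\mathcal{L}_{\mu}=\sum_{\alpha}\xi_{\alpha}^{\ast}\xi_{\alpha}$ from (\ref{L-mu}), with $\{\xi_{\alpha}\}$ an orthonormal basis of $W_{\mu}\subset S(\mathfrak{p}^{+})\approx U(\mathfrak{p}^{+})$. Since the $\xi_{\alpha}^{\ast}$ lie in $U(\mathfrak{p}^{-})$, everything would boil down to showing that $\pi(\xi_{\alpha})v_{p}=0$ for every $\alpha$; equivalently, that the linear map
\[
\phi:W_{\mu}\longrightarrow V_{\lambda,p},\qquad \xi\longmapsto \pi(\xi)\,v_{p}
\]
is identically zero.

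The key step will be to observe that $\phi$ is $K$-equivariant when the source is replaced by the twist $W_{\mu,p}=W_{\mu}\otimes\mathbb{C}_{p/2}$: for $k\in K$ and $\xi\in W_{\mu}$,
\[
\pi(k)\pi(\xi)v_{p}=\pi(\mathrm{Ad}(k)\xi)\,\pi(k)v_{p}=\iota(k)^{p}\,\pi(\mathrm{Ad}(k)\xi)\,v_{p},
\]
so $\phi\in\mathrm{Hom}_{K}(W_{\mu,p},V_{\lambda,p})$. Because $W_{\mu,p}$ is an irreducible $K$-module, either $\phi=0$ or $\phi$ is injective; in the latter case Lemma \ref{lemma3.1} would force $\mu\subseteq\lambda$, contradicting the standing hypothesis $\mu\not\subseteq\lambda$. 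The only remaining possibility is $\phi=0$, and the theorem would follow.

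There is no serious obstacle here; the argument is a direct repackaging of the $K$-type bound in Lemma \ref{lemma3.1}. The one point that deserves care is the matching of characters: the computation must be done on the $\iota^{p}$-spherical vector (not its $\iota^{-p}$-counterpart), so that raising by $W_{\mu}\subset U(\mathfrak{p}^{+})$ lands in the $K$-type $W_{\mu,p}$ that is exactly the one whose appearance in $V_{\lambda,p}$ is controlled by Lemma \ref{lemma3.1}.
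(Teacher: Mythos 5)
Your argument is correct and is essentially the paper's own proof: both hinge on the observation that the vectors $\pi(\xi)v_{p}$, $\xi\in W_{\mu}$, generate a copy of the $K$-type $W_{\mu,p}$ inside $V_{\lambda,p}$, which must vanish by Lemma \ref{lemma3.1} when $\mu\not\subseteq\lambda$. The only cosmetic difference is that the paper first rewrites the eigenvalue as $\sum_{\alpha}\bigl(\pi(\xi_{\alpha})v_{p},\pi(\xi_{\alpha})v_{p}\bigr)$ using a $U$-invariant inner product before invoking the lemma, whereas you apply the lemma directly to the $K$-equivariant map $\xi\mapsto\pi(\xi)v_{p}$ and conclude $\mathcal{L}_{\mu}v_{p}=0$ without needing unitarity.
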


\begin{proof}
 We equip $V_{\lambda,p}$ with a
$U$-invariant unitary inner product. Now $\mathcal{L}_{\mu}$ is a sum of
elements of the form $\bar{\xi}\xi$, where $\{\xi\}$ is a basis of $W_{\mu
}\subset S(\mathfrak{p}^{+})$. By Schur lemma the invariant differential
operator $\mathcal{L}_{\mu}$ acts by the scalar $c=\eta_{p}\left(
\mathcal{L}_{\mu}\right)  (\lambda+\frac{p}{2}+\rho)$ on $V_{\lambda,p}$. Let
$v_{p}$ be the $(K,\iota^{p})$-spherical vector in $V_{\lambda,p}$, normalized
to have $\left(  v_{p},v_{p}\right)  =1$. The $(K,\iota^{-p})$-spherical
function in $V_{\lambda,p}\subset C^{\infty}(G/K,\iota^{p})$ is of the form
\[
\Phi_{\lambda,p}(g)=(\pi_{\lambda}(g^{-1})v_{p},v_{p})
\]
Performing differentiation by $\mathcal{L}_{\mu}$ and evaluating at $g=1\in G$
we get
\[
c=\left(  \pi_{\lambda}(\mathcal{L}_{\mu})v,v\right) 
 =\sum\left(\pi_{\lambda}
(\xi)v_{p},\pi_{\lambda}(\bar{\xi})^{\ast}v_{p}\right)=
\sum\left(\pi_{\lambda}(\xi
)v,\pi_{\lambda}(\xi)v\right).
\]
Here we have used the fact that $\pi_{\lambda}(x)^{\ast}=\pi_{\lambda}(\bar
{x})$ since $\pi_{\lambda}$ is a unitary representation of Lie algebra
$\mathfrak{k}+i\mathfrak{p}$ of $U$. But the vectors $\pi_{\lambda}(\xi)v_{p}%
$, $\xi\in W_{\mu}$, are in the $K$-subspace of $V_{\lambda,p}$ of highest
weight $\mu+\frac{p}{2}$, which is vanishing by Lemma \ref{lemma3.1}.
\end{proof}

\begin{remark}
By the same argument above there exists polynomial 
$\eta_p(\mathcal M_{\mu}) $ such that
$\mathcal{M}_{\mu}$ acts on $V_{\lambda, p}$ by the scalar $
\eta_p(\mathcal M_{\mu}) 
\left(
\lambda+\frac p2+\rho\right)  $. Moreover the polynomial $\eta_p(\mathcal M_{\mu}) $ is
related to $\eta_p(\mathcal L_{\mu })
$ by
\[
\eta_{-p}(\mathcal M_{\mu}) =
\eta_p(\mathcal L_{\mu })
\]
for $\lambda\in\mathbb{C}^{n}$. This relation is a simple consequence of the
following observation: If $f\in C^{\infty}(G/K, p)$ then $\bar f\in C^{\infty
}(G/K, -p)$ and
\[
X\bar f =\overline{\bar X f}
\]
where $X\to\bar X$ is the complex conjugation relative to the real form
$\mathfrak{g}_{0}$. The $U$-representations appearing in $C^{\infty}(G/K, -p)$
are the same as in $C^{\infty}(G/K, -p)$ by Lemma 2.2 and are of the form
$\lambda+\frac p2$. Thus $\eta_{-p}
(\mathcal M_\mu)
(\lambda+ \frac p2+\rho) =
\eta_{-p}(\mathcal L_{\mu})(\lambda+ \frac p2+\rho)$ for $\mu\in\Lambda$, but $\Lambda$ is Zariski
dense in $\mathbb{C}^{n}=\mathfrak{a}^{*}$ so it holds also on $\mathbb{C}%
^{n}$. See further \cite{gz-shimura}.
\end{remark}


\begin{rema+}
Let $p=0$. The Harish-Chandra spherical function $\phi_{x}$ on $G/K$ in
\cite[Ch. IV, Theorem 4.3]{He3} and \cite{Shimeno-jfa} is our $\Phi_{ix,
0}$ Thus there is a change of variable $x\to ix$ from the
parameterization in \cite{He3} to ours here.
\end{rema+}

\subsection{Eigenvalue polynomials 
$\eta_p(\mathcal L_{\mu})$
in terms of $P_{\mu}$}


We can now find the precise relation between $\eta_p(\mathcal L_{\mu})$  and Okounkov's
BC-interpolation polynomials. So let $P_{\lambda}\left(x,\tau,\alpha\right)$ be as above
the BC-type interpolation polynomials with two parameters $(\tau,\alpha)$ with
the normalization that the coefficient of $m_{\lambda}$ is $1$. We prove now
one of our main results, stated as Theorem 1.1 in Section 1. Recall the
reproducing kernel $K_{\lambda}(z,w)$ of the space $W_{\lambda}$ in Section
\ref{hua-schmid} equipped with the Fock norm.

We define%
\begin{equation}
\tau=\tau(d):=\frac{d}{2},\,\alpha:=\alpha(b,p)=\frac{b+1+p}{2}%
,\label{sz-vs-ko}%
\end{equation}
so that $\rho+\frac{p}{2}=\rho(\tau,\alpha)$. Also recall the constant
$k_{\mu}$ defined in (\ref{=k-mu})

\begin{theorem}
\label{thm1} 
The Harish-Chandra image of $\mathcal{L}_{\mu}$ is
\[
\eta_{p}\left(  \mathcal{L}_{\mu}\right)  =k_{\mu}P_{\mu}\left(x;\tau,\alpha\right)
\]
where $(\tau,\alpha)$ are as in (\ref{sz-vs-ko}),
\end{theorem}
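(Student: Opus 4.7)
The plan is to invoke the uniqueness characterization of Okounkov polynomials in Theorem \ref{char-oko}. I would verify that $\eta_p(\mathcal{L}_\mu)/k_\mu$ satisfies the three defining properties of $P_\mu(x;\tau,\alpha)$ for the parameters $(\tau,\alpha)$ specified in (\ref{sz-vs-ko}).

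The degree bound is already in hand from the discussion preceding Section \ref{oko}: since $\mathcal{L}_\mu$ has order $2|\mu|$, its Harish-Chandra image lies in $\mathcal{Q}_{2|\mu|}$. For the vanishing property, I would combine Theorem \ref{thm-1} with the identity $\rho + p/2 = \rho_{\tau,\alpha}$ coming from (\ref{sz-vs-ko}) to rewrite Theorem \ref{thm-1} as
\[
\eta_p(\mathcal{L}_\mu)(\lambda+\rho_{\tau,\alpha}) = 0 \quad \text{whenever } \mu \not\subseteq \lambda.
\]
If $|\lambda|\leq|\mu|$ and $\lambda\neq\mu$, then $\mu\subseteq\lambda$ would force $|\mu|\leq|\lambda|$ and hence $\mu=\lambda$, a contradiction; so $\mu\not\subseteq\lambda$ holds automatically, and the Okounkov vanishing condition (3) of Theorem \ref{char-oko} is satisfied. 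At this stage Theorem \ref{char-oko} already gives $\eta_p(\mathcal{L}_\mu) = c\, P_\mu(x;\tau,\alpha)$ for some scalar $c=c_\mu(\tau,\alpha,p)$.

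The remaining — and main — step is to identify $c$ with $k_\mu$. The cleanest route is to evaluate both sides at the ``critical point'' $x = \mu + \rho_{\tau,\alpha}$, so that
\[
c = \frac{\eta_p(\mathcal{L}_\mu)(\mu + p/2 + \rho)}{P_\mu(\mu+\rho_{\tau,\alpha})}.
\]
The denominator $P_\mu(\mu+\rho_{\tau,\alpha})$ is a known principal-specialization product, namely the standard hook-style product over the Young diagram of $\mu$ (the Jack/BC analogue of Stanley's formula). For the numerator, by (\ref{eq:eta-p}) it is the eigenvalue of $\mathcal{L}_\mu$ on the irreducible $V_{\mu,p}$, which by (\ref{L-mu}) and the computation at the end of Theorem \ref{thm-1} equals
\[
\sum_\alpha \bigl\Vert \pi_\mu(\xi_\alpha) v_p \bigr\Vert^2
\]
for an orthonormal basis $\{\xi_\alpha\}$ of the Schmid component $W_\mu \subset S(\mathfrak{p}^+)$ acting on a normalized $(K,\iota^p)$-spherical vector $v_p$. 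This is a Fock-space reproducing-kernel computation: it can be expressed in terms of $K_\mu(z,z)$ at the base point and evaluated by known Faraut--Korányi formulas, producing another explicit hook product over $\mu$.

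The hard part will be matching these two hook products so that the ratio reduces exactly to the combinatorial constant $k_\mu = \prod_{s\in\mu}(\tau l(s)+a(s)+1)$. Concretely this is a bookkeeping exercise comparing the Okounkov principal-specialization product (which involves $\tau l(s)+a(s)+1$ together with cancelling shifted factorials in $\alpha$) with the Fock norm of the projection onto $W_\mu$. Once this bookkeeping is carried out, positivity of $k_\mu$ is immediate from its definition, completing the theorem.
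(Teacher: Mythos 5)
Your first two steps coincide with the paper's: the degree bound comes from $\eta_p(\mathcal{L}_\mu)\in\mathcal{Q}_{2|\mu|}$, and the vanishing condition (3) of Theorem \ref{char-oko} follows from Theorem \ref{thm-1} exactly as you argue, giving $\eta_p(\mathcal{L}_\mu)=c\,P_\mu(x;\tau,\alpha)$. The divergence, and the gap, is in how you pin down $c$. The paper does \emph{not} evaluate at the critical point; it compares top-degree homogeneous components. The leading term of $\eta_p(\mathcal{L}_\mu)$, read off directly from the definition (\ref{L-mu}), is the restriction of the reproducing kernel, $x\mapsto K_\mu\bigl(\sum_j x_j e_j^+,\sum_j x_j e_j^+\bigr)$, while the leading term of $P_\mu$ is the monic Jack polynomial $P_\mu^{Jac}(x_1^2,\dots,x_n^2)$. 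Summing the kernels over $|\mu|=m$ gives $\frac{1}{m!}(x_1^2+\cdots+x_n^2)^m$, so $c$ is the coefficient $k'_\mu$ in the classical expansion of this power of the sum of squares in Jack polynomials, which is known (Stanley, Yan) to be exactly (\ref{=k-mu}). This route never touches the representation $V_{\mu,p}$ at all.

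Your proposed route is not merely longer; as stated it has a real flaw. The numerator $\eta_p(\mathcal{L}_\mu)(\mu+\tfrac{p}{2}+\rho)=\sum_\alpha\Vert\pi_\mu(\xi_\alpha)v_p\Vert^2$ is a norm computed \emph{inside the finite-dimensional module} $V_{\mu,p}$ with its invariant inner product, not in the Fock space. It is not ``expressed in terms of $K_\mu(z,z)$ at the base point'': the map $\xi\mapsto\pi_\mu(\xi)v_p$ from $W_\mu\subset S(\mathfrak{p}^+)$ into $V_{\mu,p}$ rescales each $K$-isotypic piece by constants depending on $\mu$ and $p$ (these are the generalized binomial/Pochhammer-type quantities of Faraut--Kor\'anyi), and identifying them is essentially as hard as the theorem itself. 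You flag the matching of the two hook products as ``the hard part'' and defer it --- but that deferred step is the entire content beyond proportionality, so the proof is incomplete. If you want to salvage the critical-point idea, you would need an independent evaluation of the $\mathcal{L}_\mu$-eigenvalue on $V_{\mu,p}$ (as in \cite{gz-shimura}); otherwise the leading-term comparison is both shorter and avoids the representation-theoretic norm computation entirely.
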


\begin{proof}
It follows from Theorem \ref{thm-1} and Corollary \ref{char-oko} that
$\eta_p(\mathcal L_{\mu})(x)$ is a scalar multiple of $P_{\mu}^{{}}\left(x; \tau,\alpha\right)$,
$\eta_p(\mathcal L_{\mu})(x)=
k P_{\mu}^{{}}\left(x;\tau,\alpha\right)$. To find the scalar constant
$k$ we compare their leading terms. Recall the Cartan subspace
$\mathfrak{a}=\sum_{j}\mathbb{C}e_{j}$ of $\mathfrak{p}$. Now each element
in $\mathfrak{p}$ can be written as $u=u^{+}+u^{-}$, and $u^{\pm}=\frac{1}%
{2}(u\pm i\tilde{u})$ for $\tilde{u}=[Z_{0},u]\in\mathfrak{p}$ with
$Z_{0}$ defining the complex structure on $\mathfrak{p}_{0}$. In particular we
have $e_{j}^{+}=\frac{1}{2}\left(e_{j}+i\tilde{e}_{j}\right)$. Any $x=\sum
_{j}x_{j}(2\epsilon_{j})\in\mathfrak{a}^{\ast}$ can be extended to an
element in $\mathfrak{p}^{\ast}$, and thus $x(e_{j})=2x_{j}$,
$x(e_{j}^{+})=x_{j}$. It follows then from the definition of
$\mathcal{L}_{\mu}$ that the Harish-Chandra homomorphism 
$\eta_p(\mathcal L_{\mu})$ 
of
$\mathcal{L}_{\mu}$ has its leading term the polynomial
\[
x=\sum_{j}x_{j}e_{j}\in\mathfrak{a}_{0}^{\ast}\mapsto K_{\mu}
\left(\sum_{j}%
x_{j}e_{j}^{+},\sum_{j}x_{j}e_{j}^{+}\right).
\]
Now the sum of $\sum_{|\mu|=m}K_{\mu}\left(\sum_{j}x_{j}e_{j}^{+},\sum_{j}%
x_{j}e_{j}^{+}\right)$ is
\[
\sum_{|\mu|=m}K_{\mu}\left(\sum_{j}x_{j}e_{j}^{+},\sum_{j}x_{j}e_{j}^{+}\right)=\frac
{1}{m!} \left(x_{1}^{2}+\cdots+x_{n}^{2}\right)^{m}%
\]
by the definition of the reproducing kernel $K_{\mu}$. On the other hand the
top homogeneous term of $P_{\mu}\left(x;\tau,\alpha\right)$ is precisely the monic Jack
polynomial $P_{\mu}^{Jac}\left(x_{1}^{2},\cdots,x_{n}^{2}\right)$ with parameter $\tau$,
\cite{Koor}, thus the constant $k$ is precisely the coefficient $k_{\mu
}^{\prime}$ in the expansion
\[
\frac{1}{m!}
\left(x_{1}^{2}+\cdots+x_{n}^{2}\right)^{m}=\sum_{|\mu|=m}k_{\mu}^{\prime
}P_{\mu}^{Jac}\left(x_{1}^{2},x_{2}^{2},\cdots,x_{n}^{2}\right).
\]
But it is well-known (\cite{Stanley}, \cite[(iii)-(iv)-(vii), p.~1319]{Yan})
that the constant $k_{\mu}^{\prime}$ is given by (\ref{=k-mu}).
\end{proof}

\begin{rema+}
We can also give a different proof of the evaluation formula for the constant
$k_{\mu}$ above. Let $e^{+}=e_{1}^{+}+\cdots+e_{n}^{+}$ be the sum of the
strongly orthogonal positive root vectors. Recall \cite[Lemma 3.1]{FK} that
\begin{equation}
\label{eq:K-phi}
K_{\mu}\left(z, {e^{+}}\right)= \frac{\beta_{\mu}} 
{
\left(\frac d2
(n-1) +1\right)_{\mu}
} 
\psi_{\mu}(z),
\end{equation}
where $\psi_{\mu}$ is the spherical polynomial of the $K$-homogeneous space
$Ke^{+}$ (i.e., the Shilov boundary of $G/K$) normalized by $\psi_{\mu}(e^{+})=1$
\begin{equation}
\label{pi-lambda}\beta_{\mu}:= \prod_{1\le i<j\le n} \frac{\mu
_{i}-\mu_{j}+ \frac d2 (j-i)} {\frac d2(j-i)} \, \frac{(\frac d2
({j-i+1}))_{\mu_{i}-\mu_{j}}} {(\frac d2({j-i-1})+1)_{\mu
_{i}-\mu_{j}}},
\end{equation}
and
\begin{equation}
\label{gen-Pochammer}
(a)_{\mu}:= \prod_{i=1}^n\left(a-\frac d2 (i-1) \right)_{\mu_i}
= \prod_{i=1}^n\frac{\Gamma
\left(
a-\frac d2 (i-1)+{\mu_i} 
\right)}
{\Gamma\left(a-\frac d2(i-1)
\right)}
\end{equation}
is the generalized Pochammer symbol.
See e.g. \cite[(2.6)-(2.7)]{Englis-Zhang} where our $\beta_{\mu}$ is
denoted by $\pi_{\mu}$. (The coefficient $\frac{\beta_{\mu}} 
{\left(\frac
d2 (n-1) +1 \right)_{\mu}}$ is now independent of the root multiplicity $2b$.) Now the top homogeneous
term of $
\eta_p(\mathcal L_{\mu})$ is given by $
K_{\mu}\left(\sum_{j}x_{j}e_{j}^{+},\sum_{j}%
x_{j}e_{j}^{+} \right)$, which in turn is (\cite{FK})
\[
K_{\mu}\left(\sum_{j}x_{j}e_{j}^{+},\sum_{j}x_{j}e_{j}^{+}\right)=K_{\mu}\left(\sum_{j}%
x_{j}^{2}e_{j}^{+},e^{+}\right)=\frac{\beta_{\mu}}
{
\left(
\frac{d}{2}(n-1)+1
\right)_{\mu}}%
\psi_{\mu}
\left(\sum_{j}x_{j}^{2}e_{j}^{+}\right),
\]
where the last equation is just (\ref{eq:K-phi}). Now $\psi_{\mu}
\left(\sum
_{j}x_{j}^{2}e_{j}^{+}
\right)=
\psi_{\mu}\left(x_{1}^{2},\cdots,x_{n}^{2}\right)$ is the Jack
symmetric polynomial $\psi_{\mu}\left(x_{1}^{2},\cdots,x_{n}^{2}
\right)=\frac{1}{P_{\mu
}^{Jac}(1^{n})}
P_{\mu}^{Jac}\left(x_{1}^{2},\cdots,x_{n}^{2}\right)$, whereas the
Okounkov polynomial $P_{\mu}^{{}}$ has the same leading term as $P_{\mu}%
^{Jac}\left(x_{1}^{2},\cdots,x_{n}^{2}\right)$; see \cite{Koor}. Thus the constant
$k=k_{\mu}$ is $k=\frac{\beta_{\mu}}
{
\left(
\frac{d}{2}(n-1)+1
\right)_{\mu}}\frac
{1}{P_{\mu}^{Jac}(1^{n})}$. Now by the known evaluation formula (see e.g.
\cite[(4.8)]{Koor})
\begin{equation}
{P_{\mu}^{Jac}(1^{n})}=\prod_{1\leq i<j\leq n}
\frac{
\left((j-i+1)\frac{d}{2}
\right)_{\mu_{i}-\mu_{j}}}
{
\left(
(j-i)\frac{d}{2}
\right)_{\mu_{i}-\mu_{j}}}, \label{p-jac-at-1}%
\end{equation}
we can write $k$
\[
k=\frac{1}
{
\left(
\frac{d}{2}(n-1)+1
\right)_{\mu}}
\prod_{1\leq i<j\leq n}\frac{\mu_{i}%
-\mu_{j}+\frac{d}{2}(j-i)}
{\frac{d}{2}(j-i)}\,
\frac{\left(\frac{d}{2}({j-i}%
)
\right)_{\mu_{i}-\mu_{j}}}
{\left(
\frac{d}{2}({j-i-1})+1
\right)_{\mu_{i}-\mu_{j}}}.
\]
This can be simplified using the Gamma function
\[
k=\frac{1}{
\left(
\frac{d}{2}(n-1)+1
\right)_{\mu}}
\prod_{1\leq i<j\leq n} \frac
{\gamma(\mu_{i}-\mu_{j}, j-i)
}
{\gamma(0, j-i)}
\]
 where
$$
\gamma(x, j-i):=
\frac{\Gamma
\left(x+1+\frac{d}{2}(j-i)
\right)
}
{
\Gamma
\left(
x+1+\frac{d}%
{2}(j-i-1)
\right)}.
$$
By a straightforward computation using (\ref{gen-Pochammer}) we find
\[%
k =\prod_{i=1}^{n}\frac{1}{
\Gamma
\left(
1+\frac{d}{2}(n-i)+\mu_{i}
\right)}
\prod_{1\leq
i<j\leq n} \gamma(\mu_{i}-\mu_{j}, j-i),
\]
which  is precisely (\ref{=k-mu}),
by
\cite[Proposition 3.5]{B-O}.
\end{rema+}




We can now describe the Shimura sets in terms
of the Okounkov polynomials. Using
Theorem \ref{thm1}  and the definition of 
the sets $\mathcal{A}$ and $\mathcal{G}$ we have
\begin{corollary}
\label{cor1-2}
The Shimura sets are given explicitly as follows:%
\begin{align*}
\mathcal{A}
  &  =\left\{  x:q_{\lambda}\left(  x\right)  \geq0\text{ for all
}\lambda\right\}  ,\\
\mathcal{G}  &  =\left\{  x:q_{1^{j}}\left(  x\right)  \geq0\text{ for all
}j\right\}.
\end{align*}
\end{corollary}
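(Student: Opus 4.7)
The plan is to observe that the corollary follows almost immediately from Theorem \ref{thm1} combined with the explicit positivity of the constants $k_\lambda$. The key point is that Theorem \ref{thm1} identifies $\eta(\mathcal{L}_\lambda)$ with the Okounkov polynomial $P_\lambda(x;\tau,\alpha)$ up to a scalar factor $k_\lambda$; it remains only to track the sign $(-1)^{|\lambda|}$ passing between $\mathcal{L}_\lambda$ and the modified operator $\mathcal{L}'_\lambda$ on one side, and between $P_\lambda$ and $q_\lambda$ on the other.

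First I would recall the definitions. By hypothesis, $c_{\lambda,x} = c_\lambda(x)$ where $c_\lambda = (-1)^{|\lambda|}\eta(\mathcal{L}_\lambda)$, and the sets $\mathcal{A}$, $\mathcal{G}$ are characterized by $c_{\lambda,x} \geq 0$ for all $\lambda$, resp.\ for $\lambda = 1^j$. Applying Theorem \ref{thm1} (in the trivial line bundle case $p=0$, so $\alpha = (b+1)/2$ and $\tau = d/2$ as in (\ref{=tau-alpha})) gives $\eta(\mathcal{L}_\lambda) = k_\lambda P_\lambda(x;\tau,\alpha)$. Multiplying by $(-1)^{|\lambda|}$ and invoking definition (\ref{=q}) of $q_\lambda$ yields the clean identity
\[
c_\lambda(x) = (-1)^{|\lambda|} k_\lambda P_\lambda(x;\tau,\alpha) = k_\lambda \, q_\lambda(x).
\]

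The second step is to verify that $k_\lambda > 0$, so that $c_\lambda(x)$ and $q_\lambda(x)$ have the same sign. From the explicit product formula (\ref{=k-mu}),
\[
k_\lambda = \prod_{s \in \lambda}\bigl(\tau l(s) + a(s) + 1\bigr),
\]
every factor is strictly positive since $\tau = d/2 \geq 0$ (the medium-root multiplicity is non-negative), and the arm $a(s)$ and leg $l(s)$ are non-negative integers, so each factor is at least $1$. Hence $k_\lambda > 0$ for all $\lambda \in \Lambda$.

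Combining the two steps, the inequality $c_{\lambda,x} \geq 0$ is equivalent to $q_\lambda(x) \geq 0$, for every fixed $\lambda$. Intersecting over all $\lambda$ (respectively over $\lambda = 1^j$) gives the two claimed descriptions of $\mathcal{A}$ and $\mathcal{G}$. There is no serious obstacle here; the corollary is essentially a bookkeeping consequence of Theorem \ref{thm1}, whose substantive content lies in the identification with Okounkov polynomials, and the positivity of $k_\lambda$ visible from (\ref{=k-mu}).
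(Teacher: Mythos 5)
Your proposal is correct and follows essentially the same route as the paper, which derives the corollary directly from Theorem \ref{thm1} together with the definitions of $\mathcal{A}$, $\mathcal{G}$, and $q_\lambda$; you merely make explicit the sign bookkeeping and the positivity of $k_\lambda$ from (\ref{=k-mu}) that the paper leaves implicit.
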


The following is a restatement of Theorem 1.2, the notation being the same,
and it is immediate consequence of Corollaries
\ref{cor1-2} and  \ref{egg-cor}.

\begin{theorem}
\label{G-set} The Shimura set $\mathcal{G}$
is also given by $\mathcal{G}=\left\{  \xi:\varphi_{j}\left(  \xi\right)
\geq0\text{ for all }j\right\}  $.
\end{theorem}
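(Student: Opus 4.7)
The plan is to reduce the statement to the already-established Corollary \ref{cor1-2} by showing the explicit identification $q_{1^{j}} = \varphi_{j}$ of signed Okounkov polynomials with the elementary symmetric-like expressions $\varphi_{j}$.

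First I would unpack the definitions. By (\ref{=q}) we have $q_{1^{j}}(x) = (-1)^{j}P_{1^{j}}(x;\tau,\alpha)$, since $|1^{j}| = j$. Corollary \ref{egg-cor} gives the closed form
\[
P_{1^{j}}(x;\tau,\alpha) = \sum_{i_{1}<\cdots<i_{j}} \prod_{k=1}^{j} \bigl(x_{i_{k}}^{2} - \rho_{i_{k}+j-k}^{2}\bigr).
\]
Each inner product has exactly $j$ factors, so pulling out the overall sign $(-1)^{j}$ converts every factor $(x_{i_{k}}^{2} - \rho_{i_{k}+j-k}^{2})$ into $(\rho_{i_{k}+j-k}^{2} - x_{i_{k}}^{2})$. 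Comparing with the definition of $\varphi_{I}$ and $\varphi_{j} = \sum_{|I|=j} \varphi_{I}$ in the introduction, this is precisely $\varphi_{j}(x)$. Hence $q_{1^{j}} = \varphi_{j}$.

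Second, I would invoke Corollary \ref{cor1-2}, which characterizes $\mathcal{G}$ as the locus where $q_{1^{j}}(x) \geq 0$ for all $j = 1,\ldots,n$. Substituting $q_{1^{j}} = \varphi_{j}$ yields the desired description $\mathcal{G} = \{\xi : \varphi_{j}(\xi) \geq 0 \text{ for all } j\}$.

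There is no real obstacle here: the result is essentially a bookkeeping consequence of two already-proven facts, namely the explicit formula for $P_{1^{j}}$ and the translation between Shimura eigenvalue positivity and positivity of $q_{\lambda}$. The only mild point to be careful about is the sign convention: one must check that the number of factors in each summand of $P_{1^{j}}$ matches $|1^{j}| = j$ exactly, so that the global $(-1)^{j}$ distributes cleanly across the factors and flips every $x_{i_{k}}^{2} - \rho_{i_{k}+j-k}^{2}$ to $\rho_{i_{k}+j-k}^{2} - x_{i_{k}}^{2}$ without leaving a residual sign.
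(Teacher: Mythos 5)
Your proposal is correct and follows exactly the paper's own route: the paper derives Theorem \ref{G-set} as an immediate consequence of Corollaries \ref{cor1-2} and \ref{egg-cor}, which is precisely the reduction you carry out. The sign bookkeeping showing $q_{1^{j}}=\varphi_{j}$ is handled correctly.
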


\section{Further analysis of the Shimura sets
}
In the rest of the paper we let $p=0$ and write $\eta_p(\mathcal L_{\mu})
=\eta(\mathcal L_{\mu})$, namely
we consider the trivial line bundle over $G/K$.

\subsection{ The Shimura sets
and  unitary spherical representations
of $G$}

We introduce now the set 
\[
\mathcal{U}=\{x\in\mathbb{C}^{n};\text{the spherical function $\Phi_{x}$ is
positive definite}\}.
\]
In other words $\mathcal{U}$ is the set of unitary spherical representations.
This set has been studied intensively, and in \cite{Kn-Sp} it is determined
for the group $G=U(N,2)$.

\begin{prop+}
\label{uni-pos} We have
\[
\mathcal{U}\subseteq\mathcal{A} \subseteq\mathcal{G}.
\]
\end{prop+}

\begin{proof}
Let $x\in\mathcal{U}$. The spherical function $\Phi_{x}$ defines a unitary
irreducible representation $(H, \pi)$ of $G$ with a $K$-fixed vector $v$ so
that $\Phi_{x}$ is the matrix coefficient
\[
\Phi_{x}(g)=\left(v, \pi(g)v\right),
\]
where $( , )$ is the Hilbert Hermitian product in $H$; see e.g.~\cite[Ch.~IV]{He3}. For any element $X\in\mathfrak{p}$ we have
\[
X \Phi_{x}(g) = \left(v, \pi(X )v\right)= 
\left(\pi(X)^{*}v, v\right) =-\left(\pi(\bar X)v, v\right)
\]
where $\bar X$ is the complex conjugation with respect to the real form
$\mathfrak{g}_{0}$ in $\mathfrak{g}$. Now let $\mathcal{L}_{\mu}$ act on
$\Phi_{x}$ and evaluate at $g=e$. We have
\[%
\begin{split}
(-1)^{|\mu|} \mathcal{L}_{\mu}\Phi_{x}(e)  &  =
(-1)^{|\mu|}\sum_{\alpha}
\left
(v,
\pi(\xi_{\alpha}^{*}) \pi(\xi_{\alpha})v
\right
)\\
&  =(-1)^{|\mu|} \sum_{\alpha}
\left
( \pi(\xi_{\alpha}^{*})^{*}v, \pi(\xi_{\alpha
})v
\right
)
\\
&  =(-1)^{|\mu|}\sum_{\alpha}
\left
( \pi(\bar{\xi_{\alpha}^{*}})v, \pi(\xi_{\alpha
})v
\right
)\\
&  =\sum_{\alpha}
\left(
 \pi(\xi_{\alpha})v, \pi(\xi_{\alpha})v\right) \ge0,
\end{split}
\]
proving $\mathcal{U}\subseteq\mathcal{A}$.
\end{proof}

\subsection{Positivity for real parameters}

We shall study a real version of the sets $\mathcal{A}, \mathcal{G},
\mathcal{U}$. Denote $\mathcal{A}_{0}=\mathcal{A}\cap C $, $\mathcal{G}_{0}=
\mathcal{G}\cap C$, $\mathcal{U}_{0}=\mathcal{U}\cap\mathcal{C }$ 
where
$\mathcal{C }\text{ }=\left\{x: x_{1}\ge\cdots\ge x_{n}\geq0\right\}  $
is a Weyl chamber.

\begin{theo+}
\label{square} Suppose the rank $n>1$ then we have
\[
[0, \rho_{n}]^{n} \cap\mathcal{C }
\subsetneq\mathcal{A}_{0} \cap\mathcal{C }
\]

\end{theo+}

\begin{proof}
We shall need an explicit formula for $P_{\lambda}^{{}}(x)$ by Koornwinder
\cite{Koor}.

By Theorem \ref{thm1} we have, using
the notation
$q_{\lambda}$ in (\ref{=q}), that
\begin{equation*}
q_{\lambda}(x)=(-1)^{|\lambda|}P_{\lambda}^{{}}(x)=k_{\mu}\sum_{T}\psi
_{T}\prod_{s\in\lambda}
\left
(
(a_{\lambda}^{\prime}(s)+\frac{d}{2}(n-T(s)-l_{\lambda
}^{\prime}(s))+\frac{b+1}{2})^{2}-x_{T(s)}^{2}
\right)   
\end{equation*}
with $k_{\mu}$ being positive. Now if $x\in\lbrack0,\rho_{n}]^{n}$, i.e, if
$0\leq x_{j}\leq\rho_{n}\forall j$, we have for any fixed $T$ in the sum and
$s=(i,j)\in\lambda$ in the product, writing $T(s)=k$, that $a_{\lambda
}^{\prime}(s)=j-1$, $l_{\lambda}^{\prime}(s)=i-1$ and
\[%
\begin{split}
a_{\lambda}^{\prime}(s)+\frac{d}{2}
\left(
n-k-l_{\lambda}^{\prime}(s)
\right)
+\frac
{b+1}{2}  &  \geq\frac{d}{2}
\left(
n-k-l_{\lambda}^{\prime}(s)
\right)+
\frac{b+1}{2}\\
&  =
\frac{d}{2}
\left(n-k-i+1
\right)+\frac{db+1}{2}\\
&  =\rho_{k+i-1}\geq\rho_{n}\geq x_{k}.
\end{split}
\]
Here we have used the fact that $T(i,j)$ is strongly decreasing in $i$,
implying $T(s)=T(i,j)=k\leq n-i+1$ and $\rho_{k+i-1}$ makes sense. Thus each
factor in the product is nonnegative and $
(-1)^{|\lambda|}\eta(\mathcal L_{\lambda})(x)
\geq0,$ proving
$x\in\mathcal{A_0}$. The element $\rho$ is in $
\mathcal
A_0$ since it is a zero point of all 
$\eta(\mathcal L_{\lambda})$, but $\rho\notin [0, \rho_n]^n$.
This finishes the proof.
\end{proof}


Note that if the rank $n=1$ then the three sets are the same
\[
\mathcal{A}= \mathcal{G}= \mathcal{U}=[-\rho, \rho]\cup i\mathbb{R},
\]
and
\[
\mathcal{A}_{0}= \mathcal{G}_{0}= \mathcal{U}_{0}=[0, \rho]
\]
In other words, the set of unitary spherical representations are characterized
by one relation, namely $x^{2}-\rho^{2} \le0$, with the complementary series
parameters corresponding to the real points.

\subsection{The case of rank two domains $(\mathfrak{g}_{0},\mathfrak{k}%
_{0})=(\mathfrak{u}(b+2,2),\mathfrak{u}(b+2)+\mathfrak{u}(2))
$ and
$(\mathfrak{sp}(2,\mathbb{R}),\mathfrak{u}(2))$
}

We shall determine the set $\mathcal{A}_{0}$
 for the domains $G/K$ of rank
$n=2$ and with $d=2$, namely $(\mathfrak{g}_{0},\mathfrak{k}_{0})$ being the
pair $(\mathfrak{u}(b+2,2),\mathfrak{u}(b+2)+\mathfrak{u}(2))$ and prove 
an inclusion for the pair 
$(\mathfrak{sp}(2,\mathbb{R}),\mathfrak{u}(2))$;
we refer the two pairs as $I_{2,2+b}$ and  $II_{2}$. The
variable $x$ will be in the Weyl Chamber $\mathcal C\subset
\mathbb{R}_{\geq0}^{2}$ throughout the discussions
below. Recall the
Pochammer symbol 
$(a)_{m}=(a)(a+1)\cdots(a+m-1)$ 
introduce its multiparameter version%
\[
(a_{1},\ldots,a_{p})_{k}=(a_{1})_{k}\cdots,(a_{p})_{k},
\]
To simplify notation still further we will write $\left(  a\pm x\right)  _{k}$
for $\left(  a+x\right)  _{k}\left(  a-x\right)  _{k}$.

In \cite[(10.13)]{Koor} Koornwinder found explicit formulas for the
interpolation polynomials $P_{(m_{1},m_{2})}^{{}}(x_{1},x_{2})$ of rank two in
terms of hypergeometric series $\,\mbox{}_{p}F_{q}\!\left(
\genfrac{}{}{0pt}{}{a}{b}%
;t\right)  $, $a=(a_{1},\cdots,a_{p})$, $b=(b_{1},\cdots,b_{p})$. We shall be
only dealing with the series evaluated at $t=1$. To ease notation we write
\[
\,F\!\left(
\genfrac{}{}{0pt}{}{a_{1},\cdots,a_{p}}{b_{1},\cdots,b_{q}}%
\right)  =\,\mbox{}_{p}F_{q}\!\left(
\genfrac{}{}{0pt}{}{a_{1},\cdots,a_{p}}{b_{1},\cdots,b_{q}}%
;1\right)  =\sum_{k=0}^{\infty}\frac{(a_{1},\cdots,a_{p})_{k}}{(b_{1}%
,\cdots,b_{q})_{k}}\frac{1}{k!}%
\]
and its partial sum
\[
\,F^{[m]}\!\left(
\genfrac{}{}{0pt}{}{a_{1},\cdots,a_{p}}{b_{1},\cdots,b_{q}}%
\right)  =\sum_{k=0}^{m}\frac{(a_{1},\cdots,a_{p})_{k}}{(b_{1},\cdots
,b_{q})_{k}}\frac{1}{k!}%
.\]
In the formulas below we adapt also the short-hand notation 
$\alpha \pm \beta$ to indicate that  the both terms appear 
in parallell positions.

\begin{lemm+}
\label{p-f-r=2} (\cite{Koor}) The Okounkov polynomial $q_{(m_{1},m_{2})}(x)= (-1)^{m_{1}+m_{2}}P_{(m_{1},m_{2})}^{{}}(x)
$ of two variables $x=(x_{1},x_{2})$ with the parameter $(\tau,\alpha)$ is
given in terms of ${}_{4}F_{3}$-series by
\begin{multline}
q_{(m_{1},m_{2})}(x)=
(\rho_{2}\pm x_{1},\rho_{2}\pm
x_{2})_{m_{2}}\,(m_{2}+\rho_{1}\pm x_{1})_{m_{1}-m_{2}}\\
\times\,F\!\left(
\genfrac{}{}{0pt}{}{-m_{1}+m_{2},
m_{2}+\rho_{2}\pm x_{2}, \frac{d}{2}}
{1-m_{1}+m_{2}-\frac{d}{2},
m_{2}+\rho_{1}\pm x_{1}}%
\right)  .
\end{multline}
In particular if $d=2$ the polynomial $q_{(m_{1},m_{2})}%
^{{}}(x_{1},x_{2})$ can be written in terms of the partial sum of an ${}%
_{3}F_{2}$-series
\begin{multline}
q_{(m_{1},m_{2})}^{{}}(x_{1},x_{2})
=(\rho_{2}\pm x_{1}%
,\rho_{2}\pm x_{2})_{m_{2}}
\,(m_{2}+\rho_{1}\pm x_{1})_{m_{1}-m_{2}}\\
\times\,F^{[m_{1}-m_{2}]}\!\left(
\genfrac{}{}{0pt}{}{m_{2}+\rho_{2}\pm x_{2},\frac{d}{2}}
{m_{2}+\rho_{1}\pm x_{1}
}\right).
\end{multline}

\end{lemm+}

Denote
\begin{equation}
\label{eq:S-test}R(x_1, x_2):= \,F\!\left(
\genfrac{}{}{0pt}{}{\rho_{2}\pm x_{2}, \frac d2}{\rho_{1}\pm x_{1}}%
\right)
\end{equation}

\begin{theo+}
\label{thm4.4} Let
\[
\mathcal{B}=\{x\in\mathcal{C}\mid q_{1, 0}(x)\ge0, \quad q_{1, 1}(x)\ge0,
R(x)\ge0\}
\]
Then the set $\mathcal{A}_{0}$ of real points $\lambda$ for the positivity of
all $q_{\mu}(\lambda)$ is 
$\mathcal{A}_{0}=\mathcal{B}$ if
$(\mathfrak{g}_{0}, \mathfrak{k}_{0})$ if of type $I_{2, 2+b}$, and
$\mathcal{A}_{0}\subseteq\mathcal{B}$ for type $II_{2}$.
\end{theo+}

\begin{proof}
To ease notation we take all $x$ below to be in the first quarter $x_{1},
x_{2}\ge0$ instead of the Weyl chamber $\mathcal{C}$. It follows immediately
from the formulas in Lemma \ref{p-f-r=2} that $q_{(1, 0)}(x)\ge0, q_{(1,
1)}(x)\ge0$ if and only if $x\in[0, \rho_{2}]^{2}$ or $x_{1}, x_{2}\ge\rho
_{2}, \Vert x\Vert\le\Vert\rho\Vert$, namely, $x$ is in the square $[0,
\rho_{2}]^{2}$ or in disc $\{\Vert x\Vert\le\Vert\rho\Vert\}$ cut by the
square $[\rho_{2}, \rho_{1}]^{2}$, i.e. $\{\Vert x\Vert\le\Vert\rho\Vert\}\cap
[\rho_{2}, \rho_{1}]^{2}$. However the triangle $[0, \rho_{2}]^{2}%
\cap\mathcal{C}$ is in $\mathcal{A}_{0}$ by Theorem \ref{square} above so we
need only consider $x$ in the square $[\rho_{2}, \rho_{1}]^{2}$ and we
restrict $x$ to this square.

We prove first the inclusion $\mathcal{A}_{0}\subseteq\mathcal{B}$ for
$d=1,2$. Note first that $\rho_{1}=\rho_{2}+\frac{d}{2}$ and observe that
$\rho_{2}-x_{2}\leq0$ and $\rho_{2}-x_{2}+l\geq0$ if $l\geq1$ for all $x$ in
the square $[\rho_{2},\rho_{1}]^{2}$. Suppose $q
_{(m,0)}^{{}}(x_{1},x_{2})=(-1)^{m}P_{(m,0)}^{{}}(x_{1},x_{2})\geq0$ 
for all $m$. We fix $N>0$ and let $m\geq N$. Denote the
partial sum in $a_{(m,0)}^{{}}(x_{1},x_{2})$ by
\[
f_{m,N}(x):=\sum_{j=0}^{N}\frac{(m)_{j}^{-}(\rho_{2}\pm x_{2},%
)_{j}(\frac{d}{2})_{j}}
{
\left(
m+\frac{d}{2}-1\right)_{j}^{-}
\left(\rho_{1}\pm x_{1} \right)_{j}j!}.\]
Now by the above observation $q_{(m,0)}^{{}}(x_{1},x_{2})$ has leading
term $1$ with the rest being nonpositive, we have
\[
f_{m,N}(x)\geq q_{(m,0)}^{{}}(x_{1},x_{2})\geq0
\]
Letting $m\rightarrow\infty$ we find
\[
\sum_{j=0}^{N}\frac{\left(\rho_{2}\pm x_{2}\right)_{j}
(\frac{d}{2})_{j}%
}{\left(\rho_{1}\pm x_{1}\right)_{j}j!}=\lim_{m\rightarrow\infty}%
f_{m,N}(x)\geq0.
\]
Now take the limit $N\rightarrow\infty$:
\[
R(x)=\lim_{N\rightarrow\infty}\sum_{j=0}^{N}\frac{
\left(\rho
_{2}\pm x_{2}\right)_{j}(\frac{d}{2})_{j}}
{\left(\rho_{1}\pm x_{1}\right)_{j}j!}%
\geq0,
\]
proving $\mathcal{A}_{0}\subseteq\mathcal{B}$.

Suppose now $d=2$,  $x\in\mathcal{B}$ and is in the square
$[\rho_{2}, \rho_{1}]^{2}$. Thus $R(x)\geq
0$. If $m_{1}=m_{2}\geq1$ then $q_{(m_{1},m_{2})}^{{}}$ is a product
of $m_{1}$ pairs of nonpositive numbers and is nonnegative. Let $m_{1}=m\geq
m_{2}=0$.  By  Lemma \ref{p-f-r=2} 
the polynomial $q_{(m,0)}^{{}}(x_{1},x_{2})$ is a partial sum of
an ${}_{3}F_{2}$ series,
is
\[%
q_{(m,0)}^{{}}(x_{1},x_{2})  =\left(\rho_{1}\pm x_{1}\right)_{m}\,F^{[m]}\!\left(
\genfrac{}{}{0pt}{}{\rho_{2}\pm x_{2},1}{\rho
_{1}\pm x_{1}}%
\right)
\]
with the factor $(\rho_{1}\pm x_{1})_{m}\geq0$. The second factor
is
\[
\,F^{[m]}\!\left(
\genfrac{}{}{0pt}{}{-m,\rho_{2}\pm x_{2},\frac{d}{2}}{1-m-\frac
{d}{2},\rho_{1}\pm x_{1}}%
\right)  =\sum_{j=0}^{m}\frac{
\left(\rho_{2}\pm x_{2}\right)_{j}(1)_{j}%
}{
\left(
\rho_{1}\pm x_{1}
\right)_{j}j!}.
\]
All terms in the sum are nonpositive except the leading term $1$. Thus adding
infinitely many negative terms we find
\[
\begin{split}
\quad\,F^{[m]}\!
\left(
\genfrac{}{}{0pt}{}{\rho_{2}\pm x_{2}, 1}{\rho_{1}\pm x_{1},}%
\right)   &  =\sum_{j=0}^{m}\frac{
\left(\rho_{2}\pm x_{2}\right)_{j}(1)_{j}%
}{(\rho_{1}\pm x_{1})_{j}j!}\\
&  \geq\sum_{j=0}^{\infty}\frac{\left(\rho_{2}\pm x_{2}\right)_{j}(1)_{j}%
}{\left(\rho_{1}\pm x_{1}\right)_{j}j!}\\
&  =\,F\!\left(
\genfrac{}{}{0pt}{}{\rho_{2}\pm x_{2},1}{\rho_{1}\pm x_{1}}%
\right) \\
&  =R(x)\geq0.
\end{split}
\]

Now if $m_{1}>m_{2}>0$ the positivity of $q_{(m_1, m_2)}(x)$ for
$x\in\lbrack\rho_{2},\rho_{1}]^{2}$, $\rho_{2}\leq x_{2}\leq\rho_{2}%
+1=\rho_{1}$ follows immediately using Lemma \ref{p-f-r=2} as all terms in the
summation of $F^{[m_{1}-m_{2}]}$ are positive.

The proof is now completed.
\end{proof}

When $b=0$, namely when $\rho_{2}=\frac12$ the above ${}_{3}F_{2}$-series can
be evaluated. We have \cite[Theorem 3.5.5(ii)]{AAR}

\begin{lemm+}
\label{aar-lemma} Suppose $a_{1}+a_{2}=1, b_{1}+b_{2}=2a_{3}+1$. Then
\[
\,F\!\left(
\genfrac{}{}{0pt}{}{a_1, a_2, a_3}{b_1, b_2}%
\right)  =\frac{\pi\Gamma(b_{1})\Gamma(b_{2})} { 2^{2a_{3}-1} \Gamma
(\frac{a_{1}+b_{1}}2) \Gamma
\left
(\frac{a_{1}+b_{2}}2
\right
)
 \Gamma
\left
(\frac{a_{2}+b_{1}}2
\right
)
\Gamma
\left
(\frac{a_{2}+b_{2}}2
\right) }%
\]

\end{lemm+}

\begin{theo+}
\label{pos-dom} Let $(\mathfrak{g}_{0}, \mathfrak{k}_{0})$ be the symmetric
pair $(\mathfrak{su}(2, 2), \mathfrak{u}(2)+ \mathfrak{u}(2))$. Then
$\mathcal{A}_{0}=T_{1} \cup T_{2} $ is a union of two triangles, $T_{1}=[0,
\rho_{2}]^{2}\cap\mathcal{C}$, and
\[
T_{2}=\{(x_{1}, x_{2}), x_{1}\ge x_{2}\ge\rho_{2}, \quad x_{1}+x_{2}\le
\rho_{1}+\rho_{2}=2\}.
\]

\end{theo+}

\begin{proof}
The polynomial $q_{1, 0}(x)=-x_{1}^{2}-x_{2}^{2}+\rho_{1}^{2} +\rho_{2}^{2}$
and $q_{1, 1}(x)$ is by Lemma \ref{p-f-r=2} 
the
polynomial
\[
(\rho_{2}^{2}-x_{1}^{2})(\rho_{2}^{2}-x_{2}^{2}).
\]
The nonnegativity of $q_{1, 0}(x)$ is equivalent to $x_{1}^{2}+x_{2}^{2}%
\le\rho_{1}^{2}+\rho_{2}^{2}$ whereas that of $q_{1, 1}(x)$ is $x_{1},
x_{2}\le\rho_{2}$ or $x_{1}, x_{2}\ge\rho_{2}$.

The function $R(x)$ can now be evaluated by Lemma \ref{aar-lemma}, viz,
\[%
\begin{split}
R(x)  &  = \,F\!\left(
\genfrac{}{}{0pt}{}{\rho_{2}\pm x_{2}, \frac d2}{\rho_{1}\pm x_{1}}%
\right) \\
&  =\frac{\pi\Gamma(\rho_{1} +x_{1})\Gamma(\rho_{1} -x_{1})} {2^{d-1}
\Gamma
\left
(\frac{\rho_{1}+\rho_{2}+x_{1}+x_{2}}2
\right
)
 \Gamma
\left
(
\frac{\rho_{1}+\rho_{2}
+x_{1}-x_{2} }2
\right
) \Gamma
\left
(
\frac{\rho_{1}+\rho_{2} -x_{1}+x_{2} }2
\right
)
 \Gamma
\left(
\frac{\rho_{1}+\rho_{2}-x_{1}-x_{2}}2
\right)}.
\end{split}
\]
From which we see that $R(x)\ge0$ for $0\le x_{1}, x_{2}\le\rho_{2}$, and
$R(x)\ge0$ for $\rho_{2}\le x_{1}, x_{2}\le\rho_{1}$ if and only if
\[
x_{1}+x_{2}\le\rho_{1}+\rho_{2}.
\]
Our claim then follows from Theorem \ref{thm4.4}.
\end{proof}

\begin{rema+}
If $b>0$ the triangle $T_{2}=\{(x_{1}, x_{2}), x_{1}\ge x_{2}\ge\rho_{2},
\quad x_{1}+x_{2}\le\rho_{1}+\rho_{2}=2+b\}$ is not in the positivity domain
$\mathcal{A}_{0}$. Indeed if we put $x_{1}=x_{2}=\frac{\rho_{1}+\rho_{2}}2$,
then $(x_{1}, x_{2})\in T$ and the function $R(x)$ is
\[%
\begin{split}
R(x)  &  = \sum_{k=0}^{\infty} \frac{(2\rho_{2} +\frac12)_{k}(-\frac12)_{k} }
{(2\rho_{2} +\frac32)_{k}(\frac12)_{k} }\\
&  =1+\sum_{j=0}^{\infty} \frac{(2\rho_{2} +\frac12)_{j+1}(-\frac12)_{j+1} }
{(2\rho_{2} +\frac32)_{j+1}(\frac12)_{j+1} }\\
&  = 1+\sum_{j=0}^{\infty} \frac{(2\rho_{2} +\frac12)(-\frac12)} {(2\rho_{2}
+\frac32+j)(\frac12+j) }%
\end{split}
\]
by cancelling the common factors in the Pochammer symbols. This sum then can
be explicitly evaluated, viz
\[%
\begin{split}
R(x)  &  = 1+(2\rho_{2} +\frac12)(-\frac12) \sum_{j=0}^{\infty} \frac{1 }
{(2\rho_{2} +\frac32+j)(\frac12+j) }\\
&  = 1+(2\rho_{2} +\frac12)(-\frac12) \frac{1}{2\rho_{2} +1} \sum
_{j=0}^{\infty} (\frac{1 } {\frac12+j} - \frac{1 } {2\rho_{2} +\frac32+j})\\
&  = 1-\frac12(2\rho_{2} +\frac12) \frac{1}{2\rho_{2} +1} \sum_{k=0}%
^{2\rho_{2} } \frac{1}{\frac12 +k}%
\end{split}
\]
since it is a telescopic series. Now as a function of $2\rho_{2}= 1, 2,
\cdots$,
\[
\frac12(2\rho_{2} +\frac12) \frac{1}{2\rho_{2} +1} \sum_{k=0} ^{2\rho_{2} }
\frac{1}{\frac12 +k}
\]
attains its minimum $1$ when $2\rho_{2}=1$ namely when $b=0$, thus for $b>0$,
\[
R(x)= 1- \frac12(2\rho_{2} +\frac12) \frac{1}{2\rho_{2} +1} \sum_{k=0}%
^{2\rho_{2} } \frac{1}{\frac12 +k} <1-1=0.
\]
In the next section we shall give a different description of $\mathcal{A}_{0}$
and a different proof that the triangle $T_{2}$ is not in $\mathcal{A}_{0}$.
\end{rema+}

\begin{rema+}
We note that the unitarity set $\mathcal{U}\cap\mathcal{C}$ is the parameter
set for the spherical complementary series of $G$ and it has been determined
for $U(2,N)$ by Knapp and Speh \cite{Kn-Sp}. Let $k$ be the largest positive
integer such that $k\leq\frac{b-1}{2}$. Then $\mathcal{U}\cap\mathcal{C}$ is
the union of the following sets

\begin{enumerate}
\item the triangle $\{x\in\mathbb{R}_{\ge0}^{2}; 0\le x_{1} +x_{2}\le1\} $;

\item the triangles bordered by $x_{1}-x_{2}\ge j$ and $x_{1}+x_{2}\le j+1$ in
the triangle $[0, \rho_{2}]^{2}\cap\mathcal{C}$, $j=1, \cdots, k$;

\item line segments $x_{1}-x_{2}=j$ in the triangle $[0, \rho_{2}]^{2}%
\cap\mathcal{C}$, $j=1, \cdots, k$.
\end{enumerate}

Thus in this case $\mathcal{U}_{0}$ is a proper subset of $\mathcal{A}_{0}$.
\end{rema+}

\section{Alternative approach to $U\left(  m+2,2\right)  \label{sec-Alt}$}

\subsection{Limit formula for Okounkov polynomials}

We will need the following beautiful and simple identity for the $\Gamma$-function.

\begin{lemma}
Suppose $a, b, c, d\in\mathbb{C}\setminus$ $\left\{  0,-1,-2,\ldots\right\}  $
satisfy $a+b=c+d$ then%
\begin{equation}
\prod_{n=0}^{\infty}\frac{\left(  n+a\right)  \left(  n+b\right)  }{\left(
n+c\right)  \left(  n+d\right)  }=\frac{\Gamma\left(  c\right)  \Gamma\left(
d\right)  }{\Gamma\left(  a\right)  \Gamma\left(  b\right)  }. \label{=4G}%
\end{equation}

\end{lemma}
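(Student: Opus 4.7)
The plan is to evaluate the infinite product by rewriting its partial products as ratios of Gamma functions and then taking the limit using a standard Gamma asymptotic.

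First I would check absolute convergence of the product. Expanding
\[
\frac{(n+a)(n+b)}{(n+c)(n+d)} = \frac{n^{2}+(a+b)n+ab}{n^{2}+(c+d)n+cd},
\]
the hypothesis $a+b=c+d$ makes the linear terms in the numerator and denominator cancel, so the ratio equals $1 + \frac{ab-cd}{n^{2}+(a+b)n+cd} = 1 + O(1/n^{2})$, which guarantees that $\prod_{n=0}^{\infty}\frac{(n+a)(n+b)}{(n+c)(n+d)}$ converges absolutely. This is also where the equal-sum hypothesis enters decisively: without $a+b=c+d$ one would only have a $1+O(1/n)$ factor and the product would diverge.

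Next I would compute the partial product by telescoping. Using the identity $\prod_{n=0}^{N}(n+a) = \Gamma(N+1+a)/\Gamma(a)$ (valid since none of $a,b,c,d$ is a non-positive integer), the $N$th partial product is
\[
P_{N} = \frac{\Gamma(c)\Gamma(d)}{\Gamma(a)\Gamma(b)} \cdot \frac{\Gamma(N+1+a)\Gamma(N+1+b)}{\Gamma(N+1+c)\Gamma(N+1+d)}.
\]
So the problem reduces to showing the second factor tends to $1$ as $N\to\infty$.

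Finally I would invoke the standard asymptotic $\Gamma(z+\alpha)/\Gamma(z+\beta) \sim z^{\alpha-\beta}$ as $z\to\infty$ (a direct consequence of Stirling's formula). Applying this twice gives
\[
\frac{\Gamma(N+1+a)\Gamma(N+1+b)}{\Gamma(N+1+c)\Gamma(N+1+d)} \sim N^{(a-c)+(b-d)} = N^{(a+b)-(c+d)} = N^{0} = 1,
\]
again using $a+b=c+d$. Combining this with the previous display gives the claimed value $\Gamma(c)\Gamma(d)/(\Gamma(a)\Gamma(b))$. There is no real obstacle here; the main point is simply that the hypothesis $a+b=c+d$ is used in two compatible ways — first to make the product converge, and then to cancel the exponent in the Stirling asymptotic — so that both halves of the argument fit together cleanly.
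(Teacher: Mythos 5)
Your proof is correct, but it takes a different route from the paper. The paper's argument starts from the Weierstrass product $\frac{1}{\Gamma(z)}=z e^{\gamma z}\prod_{n\geq1}\left(1+z/n\right)e^{-z/n}$: substituting this for each of the four Gamma factors on the right of (\ref{=4G}) turns that side directly into $\frac{ab}{cd}\prod_{n\geq1}\frac{(1+a/n)(1+b/n)}{(1+c/n)(1+d/n)}$ once the exponentials $e^{-(a+b)/n}$ and $e^{-(c+d)/n}$ cancel via the hypothesis $a+b=c+d$, and this is exactly the left side. Your argument instead telescopes the partial products into $\frac{\Gamma(c)\Gamma(d)}{\Gamma(a)\Gamma(b)}\cdot\frac{\Gamma(N+1+a)\Gamma(N+1+b)}{\Gamma(N+1+c)\Gamma(N+1+d)}$ and kills the second factor with the ratio asymptotic $\Gamma(z+\alpha)/\Gamma(z+\beta)\sim z^{\alpha-\beta}$, again using $a+b=c+d$ to make the exponent vanish. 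Both are complete; the paper's version is a one-line term-by-term identification once the Weierstrass formula is accepted, while yours trades that for the more elementary inputs $\Gamma(z+1)=z\Gamma(z)$ plus Stirling, and has the pedagogical merit of isolating exactly where the hypothesis is used (once for convergence of the product, once for the limit of the Gamma ratio). The asymptotic you invoke is legitimate here since $N\to\infty$ through positive integers, so there is no issue with branches or poles.
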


\begin{proof}
We recall the Weierstrass formula for the $\Gamma$-function
\[
\frac{1}{\Gamma\left(  z\right)  }=ze^{\gamma z}\prod_{n=1}^{\infty}\left\{
\left(  1+z/n\right)  e^{-z/n}\right\}  ,\quad z\neq0,-1,-2,\ldots
\]
(see e.g. \cite[P. 236]{Whitt-Wat}). Using this the right side of (\ref{=4G})
becomes%
\[
\frac{ab}{cd}\prod_{n=1}^{\infty}\left\{  \frac{\left(  1+a/n\right)  \left(
1+b/n\right)  e^{-\left(  a+b\right)  /n}}{\left(  1+c/n\right)  \left(
n+d/n\right)  e^{-\left(  c+d\right)  /n}}\right\}
\]
After canceling $e^{-\left(  a+b\right)  /n}=e^{-\left(  c+d\right)  /n}$ we
get the left side of (\ref{=4G}).
\end{proof}

Let $\psi_{l}\left(  t\right)  =\left(  -1\right)  ^{k}p_{l}\left(  t\right)
=\prod_{n=0}^{l-1}\left[  \left(  n+\alpha\right)  ^{2}-t^{2}\right]  $ be the
rank $1$ Okounkov polynomial. We will show how to compute the limit of the
rescaled polynomial
\begin{equation}
r_{l}\left(  t\right)  =\frac{\psi_{l}\left(  t\right)  }{\psi_{l}\left(
0\right)  }=\prod_{n=0}^{l-1}\frac{\left(  n+\alpha\right)  ^{2}-t^{2}%
}{\left(  n+\alpha\right)  ^{2}},\quad r\left(  t\right)  =\lim_{l\rightarrow
\infty}r_{l}\left(  t\right)  . \label{=q-l}%
\end{equation}
We are mainly interested in the case $\alpha=\frac{m+1}{2}$ where $m$ is a
non-negative integer. In this case the limit can be expressed in terms of the
function
\begin{equation}
\,s\left(  t\right)  =\frac{\sin\pi t}{\left(  t+1\right)  \cdots\left(
t+m\right)  } \label{=smt}%
\end{equation}
with $s(t)=\sin\pi t$ for $m=0$.

\begin{proposition}
\label{Prop-q}If $\alpha=\frac{m+1}{2}$ where $m$ is a non-negative integer
then
\begin{equation}
r\left(  t+\alpha\right)  =-\frac{\Gamma\left(  \alpha\right)  ^{2}}{\pi
}s\left(  t\right)  \label{=qtm}%
\end{equation}

\end{proposition}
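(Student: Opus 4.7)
The plan is to reduce the claim to a direct application of the $\Gamma$-function identity (\ref{=4G}) combined with the reflection formula. Everything is essentially a bookkeeping exercise once the factors are arranged correctly.

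First I would rewrite each factor in the product via the factorization
\[
(n+\alpha)^2-(t+\alpha)^2=(n-t)(n+t+2\alpha),
\]
so that
\[
r_{l}(t+\alpha)=\prod_{n=0}^{l-1}\frac{(n-t)(n+t+2\alpha)}{(n+\alpha)(n+\alpha)}.
\]
In this form the four sequences of shifts $a=-t$, $b=t+2\alpha$, $c=d=\alpha$ visibly satisfy the balance condition $a+b=2\alpha=c+d$. Thus Lemma 5.1 applies (after checking that $t$ avoids the non-positive integer poles of $\Gamma(-t)$ and $\Gamma(t+2\alpha)$, which we may assume by analyticity and extend by continuity to the boundary cases, where both sides vanish for trivial reasons) and yields
\[
r(t+\alpha)=\lim_{l\to\infty}r_{l}(t+\alpha)=\frac{\Gamma(\alpha)^{2}}{\Gamma(-t)\,\Gamma(t+2\alpha)}.
\]

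Next I would specialise $2\alpha=m+1$. Writing $\Gamma(t+m+1)=(t+1)_{m}\Gamma(t+1)$ and invoking the reflection formula in the form
\[
\Gamma(-t)\Gamma(1+t)=-\frac{\pi}{\sin\pi t},
\]
I can eliminate $\Gamma(-t)$ to obtain
\[
\frac{1}{\Gamma(-t)\Gamma(t+2\alpha)}
=-\frac{\sin\pi t\cdot\Gamma(1+t)}{\pi\,(t+1)_{m}\Gamma(1+t)}
=-\frac{1}{\pi}\cdot\frac{\sin\pi t}{(t+1)_{m}}
=-\frac{s(t)}{\pi}.
\]
Substituting back gives $r(t+\alpha)=-\Gamma(\alpha)^{2}s(t)/\pi$, which is (\ref{=qtm}).

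The only genuine issue is convergence and applicability of Lemma 5.1. The pointwise product has terms $1+O(n^{-2})$ because the balance condition $a+b=c+d$ makes the $O(1/n)$ contributions cancel, so absolute convergence is automatic; this also legitimises passing to the limit in $r_l(t+\alpha)$. The exceptional values of $t$ (non-positive integers causing $\Gamma(-t)$ to blow up, or $t\in\{1,2,\ldots\}$ making $\sin\pi t=0$) correspond to zeros of a factor $(n-t)$ on the left and to zeros of $\sin\pi t$ on the right, so (\ref{=qtm}) extends by continuity. No real obstacle is anticipated beyond arranging the factorisation cleanly.
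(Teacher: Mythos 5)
Your proof is correct and follows essentially the same route as the paper: apply the balanced infinite-product identity (\ref{=4G}) with $c=d=\alpha$ and then use the reflection formula $\Gamma(-t)\Gamma(1+t)=-\pi/\sin\pi t$ together with $\Gamma(t+m+1)=(t+1)_m\Gamma(1+t)$. The only cosmetic difference is that you substitute $t\mapsto t+\alpha$ before applying the lemma (so your shifts are $a=-t$, $b=t+2\alpha$) whereas the paper first computes $r(t)=\Gamma(\alpha)^2/\bigl(\Gamma(\alpha+t)\Gamma(\alpha-t)\bigr)$ and shifts afterwards.
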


\begin{proof}
Applying (\ref{=4G}) to (\ref{=q-l}) we get%
\[
r\left(  t\right)  =\prod_{n=0}^{\infty}\frac{\left(  n+\alpha+t\right)
\left(  n+\alpha-t\right)  }{\left(  n+\alpha\right)  \left(  n+\alpha\right)
}=\frac{\Gamma\left(  \alpha\right)  ^{2}}{\Gamma\left(  \alpha+t\right)
\Gamma\left(  \alpha-t\right)  },
\]
for $\alpha\notin\left\{  0,-1,-2,\ldots\right\}  $. For $\alpha=\left(
m+1\right)  /2$ this gives
\[
r\left(  t+\alpha\right)  =\frac{\Gamma\left(  \alpha\right)  ^{2}}%
{\Gamma\left(  m+1+t\right)  \Gamma\left(  -t\right)  }=\left[  \frac
{\Gamma\left(  \alpha\right)  ^{2}}{\left(  t+1\right)  \cdots\left(
t+m\right)  }\right]  \frac{1}{\Gamma\left(  1+t\right)  \Gamma\left(
-t\right)  },
\]
and (\ref{=qtm}) now follows from the elementary identity $\Gamma\left(
t\right)  \Gamma\left(  1-t\right)  =-\pi/\sin\pi t.$
\end{proof}

\subsection{The Shimura sets for $U\left(  m+2,2\right)  $}

In this section we consider the real points of the Shimura sets for the rank
$2$ groups $U\left(  m+2,2\right)  $. (So the root multiplicity $2b$ is now
$2m$.) For this we fix as before
\[
\alpha=\frac{m+1}{2},
\]
and write $q_{\lambda}\left(  x\right)  $ for $q_{\lambda}\left(
x;1,\alpha\right)  $. As above  we restrict attention to the
Weyl chamber $\mathcal C$ in the first quadrant $\mathbb{R}_{+}^{2}$ and we define
\begin{align*}
\mathcal{G}_{0}  &  =\left\{  x\in\mathcal C\mid q_{\left(
1,0\right)  }\left(  x\right)  ,\text{ }q_{\left(  1,1\right)  }\left(
x\right)  \geq0\text{ }\right\} \\
\mathcal{A}_{0}  &  =\left\{  x\in\mathcal C\mid q_{\lambda}\left(
x\right)  \geq0\text{ for all }\lambda\right\}  .
\end{align*}

Our description of these set will involve the triangles%
\[
T_{1}=\left[  0,\alpha\right]  \times\left[  0,\alpha\right]  \cap\mathcal{C},
\quad T_{2}=\left[  \alpha,\alpha+1\right]  \times\left[  \alpha
,\alpha+1\right]  \cap\mathcal{C}, .
\]
For $\mathcal{G}_{0}$ we consider the following subset of $T_{2}$
\[
V=\left\{  x\in T_{2}:q_{1,0}\left(  x\right)  \geq0\right\}  .
\]

\begin{theorem}
\label{Th:Gm}We have $\mathcal{G}_{0}=T_{1}\cup V$.
\end{theorem}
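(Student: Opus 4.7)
The plan is to reduce the statement to an elementary case analysis based on the explicit formulas for $q_{(1,0)}$ and $q_{(1,1)}$. First I would use Corollary \ref{egg-cor} together with the specialization $\rho_1 = \alpha+1$, $\rho_2 = \alpha$ (which comes from $\tau = d/2 = 1$ and $\alpha = (b+1)/2 = (m+1)/2$ for $U(m+2,2)$) to record
\[
q_{(1,0)}(x) = \rho_1^2 + \rho_2^2 - x_1^2 - x_2^2, \qquad q_{(1,1)}(x) = (\alpha^2 - x_1^2)(\alpha^2 - x_2^2).
\]

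Next I would analyze the inequality $q_{(1,1)}(x) \geq 0$ on $\mathcal{C}$. Since $x_1 \geq x_2 \geq 0$, nonnegativity of the product $(\alpha^2 - x_1^2)(\alpha^2 - x_2^2)$ forces either $x_1 \leq \alpha$ (both factors nonnegative) or $x_2 \geq \alpha$ (both factors nonpositive). These two cases partition the candidate region into $T_1$ (up to the boundary $x_2 = \alpha$) and the quadrant $\{x \in \mathcal{C} : x_2 \geq \alpha\}$.

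In the first case, where $x \in T_1 = [0,\alpha]^2 \cap \mathcal{C}$, one has $x_1^2 + x_2^2 \leq 2\alpha^2 \leq \alpha^2 + (\alpha+1)^2 = \rho_1^2 + \rho_2^2$, so $q_{(1,0)}(x) \geq 0$ is automatic, giving $T_1 \subseteq \mathcal{G}_0$. In the second case, where $x_2 \geq \alpha$, the inequality $q_{(1,0)}(x) \geq 0$ is equivalent to $x_1^2 + x_2^2 \leq \rho_1^2 + \rho_2^2$; combined with $x_2 \geq \alpha = \rho_2$, this yields $x_1^2 \leq (\alpha+1)^2$, i.e.\ $x_1 \leq \rho_1 = \alpha+1$. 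Hence $x$ lies in the square $[\alpha,\alpha+1]^2 \cap \mathcal{C} = T_2$, and satisfies $q_{(1,0)}(x) \geq 0$, so $x \in V$. Conversely any $x \in V$ satisfies both inequalities by definition.

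Combining the two cases gives $\mathcal{G}_0 = T_1 \cup V$. There is no real obstacle here: everything reduces to the explicit quadratic/quartic formulas for $q_{(1,0)}$ and $q_{(1,1)}$, together with the observation that $\rho_2 = \alpha$ and $\rho_1 = \alpha+1$ force the bound $x_1 \leq \alpha+1$ automatically from $q_{(1,0)}(x) \geq 0$ once $x_2 \geq \alpha$, so the second triangle appears naturally rather than being imposed.
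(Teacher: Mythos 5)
Your proposal is correct and follows essentially the same route as the paper: both reduce to the explicit formulas $q_{(1,1)}(x)=(\alpha^2-x_1^2)(\alpha^2-x_2^2)$ and $q_{(1,0)}(x)=\rho_1^2+\rho_2^2-x_1^2-x_2^2$, split into the cases $x_1\leq\alpha$ versus $x_2\geq\alpha$, and observe that $q_{(1,0)}\geq 0$ is automatic in the first case and forces $x\in T_2$ in the second. Your write-up merely spells out the arithmetic that the paper leaves implicit.
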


\begin{proof}
For $x\in\mathbb{R}_{+}^{2}$ the inequalities $q_{\left(  1,1\right)  }\left(
x\right)  \geq0$ and $q_{\left(  1,0\right)  }\left(  x\right)  $ are
respectively
\[
\left(  \alpha^{2}-x_{1}^{2}\right)  \left(  \alpha^{2}-x_{2}^{2}\right)
\geq0,\quad x_{1}^{2}+x_{2}^{2}\leq\alpha^{2}+\left(  \alpha+1\right)  ^{2}%
\]
The $q_{\left(  1,1\right)  }$ inequality holds iff either (a) $x\in T$ or (b)
$x_{1},x_{2}\geq\alpha$. In case (a) the $q_{\left(  1,0\right)  }$ inequality
is automatic, in case (b) it forces $x\in T_{2}$. The result follows.
\end{proof}

For $\lambda=\left(  l+k,k\right)  $ we have
\[
P_{\lambda}\left(  x\right)  =\frac{1}{x_{1}^{2}-x_{2}^{2}}\operatorname{det}%
\begin{bmatrix}
p_{l+k+1}\left(  x_{1}\right)  & p_{l+k+1}\left(  x_{2}\right) \\
p_{k}\left(  x_{1}\right)  & p_{k}\left(  x_{2}\right)
\end{bmatrix}
,
\]
which gives%
\begin{gather}
q_{\lambda}\left(  x\right)  =\left(  -1\right)  ^{\left\vert \lambda
\right\vert }P_{\lambda}\left(  x\right)  =\frac{\psi_{l+1}^{k}\left(
x_{2}\right)  -\psi_{l+1}^{k}\left(  x_{1}\right)  }{x_{1}^{2}-x _{2}^{2}}%
\psi_{k}\left(  x_{1}\right)  \psi_{k}\left(  x_{2}\right) \label{=qlam}\\
\text{where }\psi_{l+1}^{k}\left(  t\right)  =\frac{\psi_{l+k+1}\left(
t\right)  }{\psi_{k}\left(  t\right)  }=\prod\nolimits_{i=0}^{l}\left[
\left(  i+k+\alpha\right)  ^{2}-t^{2}\right] \nonumber
\end{gather}

\begin{lemma}
\label{Lem:T} The inequality $q_{\lambda}\left(  x\right)  \geq0$ holds in the
following cases.

\begin{enumerate}
\item If $x\in T$ and $\lambda$ is arbitrary.

\item If $x\in T_{2}$ and $\lambda_{2}=k>0.$
\end{enumerate}
\end{lemma}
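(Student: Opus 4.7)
My plan is to analyze the three factors of the product formula (\ref{=qlam}) separately and show each has the correct sign on the appropriate triangle. The key observation is a monotonicity statement for $\psi_{l+1}^k$: viewed as a polynomial in $u = t^2$, we have $\Psi(u) := \psi_{l+1}^k(t) = \prod_{i=0}^l [(i+k+\alpha)^2 - u]$, whose derivative $\Psi'(u) = -\sum_j \prod_{i\neq j}[(i+k+\alpha)^2 - u]$ is non-positive on $[0, (k+\alpha)^2]$, since each factor in each partial product is non-negative there. Thus $\Psi$ is non-increasing in $u$, so the divided difference
\[
\frac{\psi_{l+1}^k(x_2) - \psi_{l+1}^k(x_1)}{x_1^2 - x_2^2}
\]
is non-negative whenever $x_1, x_2 \in [0, k+\alpha]$.

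For case (1), a point $x \in T_1$ satisfies $0 \le x_1, x_2 \le \alpha \le k+\alpha$, so the divided difference is non-negative. The remaining factors $\psi_k(x_j) = \prod_{n=0}^{k-1}[(n+\alpha)^2 - x_j^2]$ are also non-negative, since every individual factor satisfies $(n+\alpha)^2 - x_j^2 \ge \alpha^2 - x_j^2 \ge 0$. Hence all three pieces of (\ref{=qlam}) are non-negative, and $q_\lambda(x) \ge 0$.

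For case (2), assume $x \in T_2$ and $k \ge 1$. Since $T_2 \subseteq [\alpha, \alpha+1]^2 \subseteq [0, k+\alpha]^2$ (using $k \ge 1$), the divided difference is again non-negative by the monotonicity of $\Psi$. Now for each $x_j \in [\alpha, \alpha+1]$ the product $\psi_k(x_j)$ has exactly one non-positive factor, namely $\alpha^2 - x_j^2$ at $n=0$, while every other factor $(n+\alpha)^2 - x_j^2$ for $1 \le n \le k-1$ is non-negative (using $(n+\alpha)^2 \ge (1+\alpha)^2 \ge x_j^2$). Therefore $\psi_k(x_j) \le 0$ for $j = 1, 2$, so $\psi_k(x_1)\psi_k(x_2) \ge 0$. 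Multiplying the three pieces of (\ref{=qlam}) gives $q_\lambda(x) \ge 0$.

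The argument is essentially a careful sign check after invoking the monotonicity of $\Psi$; no serious obstacle is expected. The only point to be careful about is the hypothesis $k \ge 1$ in case (2): if $k = 0$ then $\psi_0 \equiv 1$ and the parity argument for $\psi_k(x_j) \le 0$ collapses (one would instead have $\psi_k \equiv 1 \ge 0$, but then the divided difference need not be controlled since $T_2$ is no longer contained in $[0, \alpha]^2$), which is precisely the reason that case is excluded.
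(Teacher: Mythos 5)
Your proof is correct and follows essentially the same route as the paper: a sign analysis of the three factors in (\ref{=qlam}), using that $\psi_{l+1}^{k}$ is decreasing on the relevant range and that $\psi_{k}$ has a definite sign on each of the two triangles (positive on $T_1$, non-positive on $T_2$ when $k\geq 1$). The only cosmetic differences are that you justify the monotonicity explicitly by differentiating in $u=t^{2}$ where the paper simply asserts that $\psi_{l+1}^{k}$ is positive and decreasing, and that you work with non-strict inequalities throughout instead of the paper's reduction by continuity to generic points.
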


\begin{proof}
By continuity and symmetry it suffices to prove $q_{\lambda}\left(  x\right)
\geq0$ for $x$ satisfying the additional conditions
\begin{equation}
x_{1}>x_{2},\quad x_{1},x_{2}\notin\left\{  \alpha,\alpha+1\right\}  .
\label{=rest}%
\end{equation}

In this case we have
\[
x_{1}^{2}-x_{2}^{2}>0,\quad0<x_{2}<x_{1}<\alpha
\]
Now $0<t<\alpha$, $\psi_{k}\left(  t\right)  $ is positive and $\psi_{l+1}%
^{k}\left(  t\right)  $ is positive and decreasing. It follows that
\begin{equation}
\psi_{k}\left(  x_{1}\right)  \psi_{k}\left(  x_{2}\right)  >0\text{ and }%
\psi_{l+1}^{k}\left(  x_{2}\right)  -\psi_{l+1}^{k}\left(  x_{1}\right)  >0.
\label{=ab-pos}%
\end{equation}
Thus by (\ref{=qlam}) we have $q_{\lambda}\left(  x\right)  \geq0$.

Let $\lambda=\left(  l+k,k\right)  $ with $k\geq1,$ and suppose $x\in T_{2}$
satisfies the assumptions (\ref{=rest}). Then we have
\[
x_{1}^{2}-x_{2}^{2}>0,\quad\alpha<x_{2}<x_{1}<\alpha+1.
\]
For $\alpha<t<\alpha+1$ and $k\geq1$, $\psi_{k}\left(  t\right)  $ is negative
and $\psi_{l+1}^{k}\left(  t\right)  $ is positive and decreasing. Once again
(\ref{=ab-pos}) holds and so $q_{\lambda}\left(  x\right)  \geq0.$
\end{proof}

We now describe $\mathcal{A}_{0}$ and for this we recall the function
$s\left(  t\right)  =\frac{\sin\pi t}{\left(  t+1\right)  \cdots\left(
t+m\right)  }$ as in the previous section, and we let $S\left(  x,y\right)  $
denote its symmetrized divided difference%

\begin{equation}
S\left(  x_1, x_2\right)  =\frac{s\left(  x_1\right)  -s\left(  x_2\right)  }%
{x_1-x_2}\text{ for }x_1\neq x_2,\quad S\left(  x,x\right)  =s^{\prime}\left(
x\right)  , \label{=S}%
\end{equation}
and we put$\quad$%
\[
W=\left\{  x\in T_{2}:S\left(  x-\alpha\right)  \geq0\right\}
\]
Here and elsewhere $x-\alpha$ denotes the pair $\left(  x_{1}-\alpha,
x_{2}-\alpha\right)  $.

\begin{theorem}
\label{Th:Am}We have $\mathcal{A}_{0}=T_{1}\cup W.$
\end{theorem}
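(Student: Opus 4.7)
The plan is to identify $\mathcal{A}_0\cap T_2$ with $W$ via a monotonicity property of the polynomials $\phi_l(t):=-\psi_{l+1}(t)=-\prod_{n=0}^{l}\left[(n+\alpha)^2-t^2\right]$. First I would combine Lemma~\ref{Lem:T}(1), which gives $T_1\subseteq\mathcal{A}_0$, with Lemma~\ref{Lem:T}(2), which settles all $q_\lambda$ with $\lambda_2\geq 1$ on $T_2$, reducing the problem to showing
\[
\mathcal{A}_0\cap T_2=\{x\in T_2:q_{(l,0)}(x)\geq 0\text{ for all }l\geq 0\}=W.
\]
Formula~\eqref{=qlam} with $k=0$ gives $q_{(l,0)}(x)=[\phi_l(x_1)-\phi_l(x_2)]/(x_1^2-x_2^2)$. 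A factor-by-factor check (only the $n=0$ factor is nonpositive on $[\alpha,\alpha+1]$) shows $\phi_l\geq 0$ there, so for $x_1>x_2$ in $T_2$ with $\phi_l(x_2)>0$, the inequality $q_{(l,0)}(x)\geq 0$ is equivalent to $\phi_l(x_1)/\phi_l(x_2)\geq 1$.

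The heart of the argument will be the observation that this ratio is strictly \emph{decreasing} in $l$, since the incremental multiplier
\[
\frac{\phi_{l+1}(x_1)/\phi_{l+1}(x_2)}{\phi_l(x_1)/\phi_l(x_2)}=\frac{(l+1+\alpha)^2-x_1^2}{(l+1+\alpha)^2-x_2^2}
\]
lies strictly in $(0,1)$ whenever $0<x_2<x_1<l+1+\alpha$. I would then identify the limit via the Weierstrass identity~\eqref{=4G}:
\[
\lim_{l\to\infty}\frac{\phi_l(x_1)}{\phi_l(x_2)}=\prod_{n=0}^{\infty}\frac{(n+\alpha)^2-x_1^2}{(n+\alpha)^2-x_2^2}=\frac{\Gamma(\alpha+x_2)\Gamma(\alpha-x_2)}{\Gamma(\alpha+x_1)\Gamma(\alpha-x_1)}=\frac{s(x_1-\alpha)}{s(x_2-\alpha)},
\]
where the last equality is Proposition~\ref{Prop-q}.

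This completes the generic (strictly interior, off-diagonal) case. Since $\phi_0(x_1)/\phi_0(x_2)=(x_1^2-\alpha^2)/(x_2^2-\alpha^2)>1$ and the ratio decreases monotonically to the value above, ``$\phi_l(x_1)/\phi_l(x_2)\geq 1$ for all $l$'' is equivalent to the limit being $\geq 1$, i.e.\ $s(x_1-\alpha)\geq s(x_2-\alpha)$, which for $x_1>x_2$ is precisely $S(x-\alpha)\geq 0$.

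The main remaining obstacle — and the part most likely to need careful bookkeeping — is the boundary and degenerate cases, where the ratio argument degenerates. At $x_2=\alpha$ one has $\phi_l(\alpha)=0=s(0)$, so both sides hold automatically; at $x_1=\alpha+1$ the vanishing $\phi_l(\alpha+1)=0$ for $l\geq 1$ forces $x_2=\alpha$ for positivity to persist, matching the zero of $S$ at that corner. On the diagonal $x_1=x_2=t$, where $q_{(l,0)}(t,t)=\phi_l'(t)/(2t)$ and $S(t-\alpha,t-\alpha)=s'(t-\alpha)$, I would run the same monotonicity argument on the logarithmic derivative $\phi_l'(t)/\phi_l(t)$, which is also strictly decreasing in $l$ (adding one more positive term to the subtracted sum $\sum_{n\geq 1}2t/((n+\alpha)^2-t^2)$) and converges to $s'(t-\alpha)/s(t-\alpha)$, thereby closing this final case.
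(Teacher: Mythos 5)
Your argument is, in substance, the paper's own proof: after the reduction to the single--row partitions $(l,0)$ on $T_2$ via Lemma \ref{Lem:T}, the paper also studies the ratio $b_l=\psi_{l+1}(x_1)/\psi_{l+1}(x_2)$, shows it decreases in $l$, and identifies its limit as $s(x_1-\alpha)/s(x_2-\alpha)$ using (\ref{=4G}) and Proposition \ref{Prop-q}, concluding exactly as you do that positivity for all $l$ is equivalent to the limit being $\geq1$, i.e.\ to $S(x-\alpha)\geq0$. (Incidentally your incremental multiplier $\bigl[(l+1+\alpha)^2-x_1^2\bigr]/\bigl[(l+1+\alpha)^2-x_2^2\bigr]$ is the correct one; the paper's displayed ratio drops the factor $(\alpha+l+1+x_1)/(\alpha+l+1+x_2)$, though the monotonicity conclusion is unaffected.) Your treatment of the diagonal via the logarithmic derivative is a genuine, and arguably cleaner, variant: the paper instead extends the off-diagonal monotonicity statement to $x_1=x_2$ ``by continuity.''

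There is one logical gap in your opening reduction. Lemma \ref{Lem:T} only supplies \emph{inclusions} into $\mathcal{A}_0$ (that $T_1\subseteq\mathcal{A}_0$ and that the $q_\lambda$ with $\lambda_2\geq1$ are harmless on $T_2$); it does not exclude points of $\mathcal{A}_0$ lying outside $T_1\cup T_2$, so proving $\mathcal{A}_0\cap T_2=W$ does not by itself yield $\mathcal{A}_0=T_1\cup W$. You must also invoke the containment $\mathcal{A}_0\subseteq\mathcal{G}_0=T_1\cup V\subseteq T_1\cup T_2$ from Theorem \ref{Th:Gm} (equivalently, run the $q_{(1,0)}$, $q_{(1,1)}$ inequalities directly). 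This is a one-line fix. A second, smaller point: in your boundary case $x_1=\alpha+1$ you must verify not merely that the $q_{(l,0)}$-positivity fails, but that $S(x-\alpha)\geq0$ fails as well, so that the two conditions remain equivalent there; at the corner $(\alpha+1,\alpha+1)$ this requires the explicit computation $S(1,1)=s'(1)=-\pi/(m+1)!<0$, which your phrase ``matching the zero of $S$'' glosses over.
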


\begin{proof}
By Thereom \ref{Th:Gm} we know that
\[
\mathcal{A}_{0}\subseteq\mathcal{G}_{0}=T\cup V\subseteq T\cup T_{2}.
\]
By Lemma \ref{Lem:T} it remains only to prove that for $x\in T_{2}$
\begin{equation}
q_{l,0}\left(  x\right)  \geq0\text{ for all }l\iff S\left(  x -\alpha\right)
\geq0\text{ } \label{=equiv}%
\end{equation}

Let $x_{1}\geq x_{2}$. We divide the proof of
(\ref{=equiv}) into three cases.

Case 1: We first consider $x\in T_{2}$ satisfying%
\begin{equation}
\alpha+1>x_{1}\geq x_{2}>\alpha. \label{=xi2}%
\end{equation}

This implies that $-\psi_{l+1}\left(  x_{2}\right)  ,$ $s\left(  x_{2}%
-\alpha\right)  ,$ and $x_{1}+x_{2}$ are all $>0$, and we define.
\[
c_{l}\left(  x\right)  =\frac{q_{l,0}\left(  x\right)  }{-\psi_{l+1}\left(
x_{2}\right)  },\quad c\left(  x\right)  =\frac{S\left(  x-\alpha\right)
}{\left(  x_{1}+x_{2}\right)  \left(  s\left(  x _{2}-\alpha\right)  \right)
}%
\]

By positivity (\ref{=equiv}) is equivalent to the assertion%
\begin{equation}
c_{l}\left(  x\right)  \geq0\text{ for all }l\iff c\left(  x\right)  \geq0.
\label{=c-equiv}%
\end{equation}
We will prove a stronger statement, namely%
\begin{equation}
c_{l}\left(  x\right)  \text{ is a decreasing sequence with limit }c\left(
x\right)  \label{=c-decr}%
\end{equation}
By continuity it suffices to prove (\ref{=c-decr}) under the additional
assumption $x_{1}>x_{2},$ and we may consider then the simpler expressions%
\begin{align*}
b_{l}  &  =\left(  x_{1}^{2}-x_{2}^{2}\right)  c_{l}\left(  x\right)
+1=\frac{\psi_{l+1}\left(  x_{1}\right)  }{\psi_{l+1}\left(  x_{2}\right)  }\\
b  &  =\left(  x_{1}^{2}-x_{2}^{2}\right)  c\left(  x\right)  +1=\frac
{s\left(  x_{1}-\alpha\right)  }{s\left(  x_{2}-\alpha\right)  }\quad
\end{align*}
Then $b_{l}$ and $b$ are strictly positive and we have%
\[
\frac{b_{l+1}}{b_{l}}=\frac{\alpha+l+1-x_{1}}{\alpha+l+1-x_{2}}\leq1.
\]
Moreover by Proposition \ref{Prop-q} we have $b_{l}\rightarrow b$. Thus
$b_{l}$ is a decreasing sequence with limit $b.$ This implies (\ref{=c-decr})
and hence (\ref{=c-equiv}) and (\ref{=equiv}).

Case 2: We now suppose that $x_{2}=\alpha$, so that $x$ is of the form
$\left(  x_{1},\alpha\right)  $. We claim that we have
\[
q_{l,0}\left(  x\right)  \geq0\text{ for all }l\text{,\quad}S\left(
x-\alpha\right)  \geq0.
\]
By continuity it suffices to prove this for $x_{1}\neq\alpha$ in which case it
follows from the explicit formula
\[
q_{l,0}\left(  x\right)  =\frac{-q_{l}\left(  x_{1}\right)  }{x_{1}^{2}%
-\alpha^{2}},\text{\quad}S\left(  x-\alpha\right)  =\frac{s\left(
x_{1}-\alpha\right)  }{x_{1}-\alpha}%
\]
Thus both sides of (\ref{=equiv}) are true and hence equivalent.

Case 3: Finally suppose that $x_{1}=\alpha+1$, so that $x$ is of the form
$\left(  \alpha+1,x_{2}\right)  .$ By Case 2 we may further supppose that
$x_{2}>\alpha$. With these assumptions we have $q_{1,0}\left(  x\right)  <0$.
So the left side of (\ref{=equiv}) is false and we need only prove that%
\begin{equation}
S\left(  x-\alpha\right)  <0. \label{=Sneg}%
\end{equation}

If $x_{2}\neq\alpha+1$ this follows from the explicit formula
\[
S\left(  x-\alpha\right)  =\frac{-s\left(  x_{2}-\alpha\right)  }{\left(
\alpha+1\right)  -x_{2}}.
\]

If $x_{2}=\alpha+1$ then $x$ is the point $\left(  \alpha+1,\alpha+1\right)  $
and we have
\[
S\left(  x-\alpha\right)  =S\left(  1,1\right)  =s^{\prime}\left(  1\right)
.
\]
To compute this derivative we recall the formula%
\[
s\left(  t\right)  =\frac{\sin\left(  \pi t\right)  }{g\left(  t\right)
},\quad g\left(  t\right)  =\left(  t+1\right)  \cdots\left(  t+m\right)
\]
Thus we have $g\left(  1\right)  =\left(  m+1\right)  !$ and%
\[
s^{\prime}\left(  t\right)  =\frac{\left(  \pi\cos\pi t\right)  g\left(
t\right)  -\left(  \sin\pi t\right)  g^{\prime}\left(  t\right)  }{g\left(
t\right)  ^{2}},\quad s^{\prime}\left(  1\right)  =-\frac{\pi}{\left(
m+1\right)  !}%
\]
This proves (\ref{=Sneg}) and hence (\ref{=equiv}).

Cases 1, 2, 3 establish then (\ref{=equiv}) for $x\in T_{2}$.
\end{proof}

Now if $m=0$ then it is clear that the set $W$ is the triangle $T_{2}$ borded
by $x_{1}=\frac{1}{2},x_{2}=\frac{1}{2},x_{1}+x_{2}=2$ so this agrees with
Theorem \ref{thm4.4}.


\section{Appendix}

In this appendix we write $(x, y)$
instead of $(x_1, x_2)$. 
The set $W$ of Theorem \ref{Th:Am} is the $\left(  \alpha,\alpha\right)  $
translate of the region in the positive quadrant bounded by the coordinate
axes and the curve defined implictly by the equation $S\left(  x,y\right)  =0$.

We write $S_{m}\left(  x,y\right)  $ for $S\left(  x,y\right)  $ to indicate
its dependence on $m$, and we give the graph of $S_{m}\left(  x,y\right)  =0$
for $m=0,1,2,3.$%

\[%
{\parbox[b]{2.5097in}{\begin{center}
\fbox{\includegraphics[
natheight=2.509700in,
natwidth=2.509700in,
height=2.5097in,
width=2.5097in
]%
{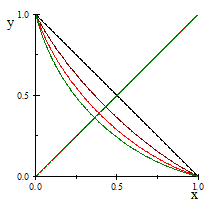}%
}\\
$S_m\left(  x,y\right)  =0,$ $m=0,1,2,3$
\end{center}}}%
\quad
\]
This graph is symmetric about the line $x=y$, and it is of some interest to
determine the point $c_{m}$ where the graph crosses the line $x=y.$

\begin{lemma}
The point $c=c_{m}$ satisfies the equation%
\begin{equation}
\pi\cot\pi c=%
{\textstyle\sum_{i=1}^{m}}
\frac{1}{\left(  c+i\right)  }. \label{=ceq}%
\end{equation}

\end{lemma}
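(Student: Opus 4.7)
The plan is straightforward: use the definition of $S$ on the diagonal and a logarithmic differentiation of $s(t)$.

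First I would observe that by definition $S(x,x)=s'(x)$, so the condition that the curve $S_m(x,y)=0$ crosses the diagonal at $(c,c)$ is simply
\[
s'(c)=0.
\]
Thus the lemma reduces to showing that the equation $s'(c)=0$ is equivalent to \eqref{=ceq} at the point $c=c_m$.

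Next I would compute $s'(t)$ via the logarithmic derivative. Writing $s(t)=\sin(\pi t)/g(t)$ with $g(t)=\prod_{i=1}^m(t+i)$, one obtains
\[
\frac{s'(t)}{s(t)}=\pi\cot\pi t-\frac{g'(t)}{g(t)}=\pi\cot\pi t-\sum_{i=1}^m\frac{1}{t+i},
\]
valid wherever $s(t)\neq 0$ and $t\notin\{-1,\ldots,-m\}$. Since $c_m$ lies in the open interval $(0,1)$ (as visible from the graphs and from the fact that on $T_2-\alpha$ the diagonal runs from $(0,0)$ to $(1,1)$, with $s(0)=s(1)=0$ but $s$ strictly positive in between for the relevant $m$), we have $s(c_m)>0$, so $s'(c_m)=0$ forces the bracketed expression to vanish, yielding \eqref{=ceq}.

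The only subtlety is excluding the spurious roots of $s'$ coming from zeros of $s$. Since $c_m\in(0,1)$ and $\sin\pi t>0$ and $g(t)>0$ on that open interval, the factor $s(c_m)$ is nonzero, and hence the identity $s'(c_m)/s(c_m)=0$ gives \eqref{=ceq} directly. This disposes of the only potential obstacle, and the lemma follows.
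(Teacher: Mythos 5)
Your proof is correct and follows essentially the same route as the paper: both identify $c_m$ as a critical point of $s$ via $S(x,x)=s'(x)$ and then use the logarithmic derivative of $s$, exploiting positivity of $s$ on $(0,1)$ to rule out spurious zeros. Your version just makes explicit the step the paper labels "easy to see."
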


\begin{proof}
It is easy to see that $c_{m}$ is a critical point of $s\left(  t\right)  $.
Since $s\left(  x\right)  $ is positive in the open interval $\left(
0,1\right)  ,$ its critical points are the same as those of the function%
\[
\ln\left(  s\left(  x\right)  \right)  =\ln\left(  \sin\pi x\right)  -%
{\textstyle\sum_{i=1}^{m}}
\ln\left(  x+i\right)  .
\]
This gives%
\[
\frac{d}{dx}\ln\left(  s\left(  x\right)  \right)  =\pi\cot\pi x-%
{\textstyle\sum_{i=1}^{m}}
\frac{1}{\left(  x+i\right)  }.
\]
The result follows by setting the derivative equal to $0.$
\end{proof}

\begin{corollary}
We have $c_{m}\rightarrow0$ as $m\rightarrow\infty$.
\end{corollary}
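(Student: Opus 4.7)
The plan is to read off the conclusion directly from the equation
\[
\pi\cot\pi c_{m} \;=\; \sum_{i=1}^{m}\frac{1}{c_{m}+i}
\]
established in the preceding Lemma, together with one elementary piece of information about the location of $c_m$: it lies in the open interval $(0,1)$. This last fact comes from the description of $c_m$ in the text, namely that it is the point where the symmetric curve $S_m(x,y)=0$ meets the diagonal $x=y$; and from $S(x,x)=s'(x)$, $c_m$ is a critical point of $s(t)=\sin(\pi t)/[(t+1)\cdots(t+m)]$, which is positive on $(0,1)$ and vanishes at the endpoints, so its relevant critical point lies strictly inside $(0,1)$.

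Granting $c_m \in (0,1)$, I would bound the right-hand side from below by dropping $c_m$ from each denominator: since $c_m+i \le 1+i$,
\[
\pi\cot\pi c_{m} \;=\; \sum_{i=1}^{m}\frac{1}{c_{m}+i} \;\ge\; \sum_{i=1}^{m}\frac{1}{1+i} \;=\; H_{m+1}-1,
\]
where $H_k$ denotes the $k$-th harmonic number. Since $H_{m+1}\to\infty$, this forces $\pi\cot\pi c_m \to +\infty$ as $m\to\infty$. Now on $(0,1)$ the function $c\mapsto \pi\cot\pi c$ is strictly decreasing from $+\infty$ at $0^{+}$ to $-\infty$ at $1^{-}$; in particular it is bounded on every compact subinterval of $(0,1)$, and tends to $+\infty$ only as $c\to 0^{+}$. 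Combining these observations gives $c_m\to 0^{+}$, which is the corollary.

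I do not anticipate any real obstacle: the entire argument reduces to a harmonic-series divergence on one side of the equation and the monotonicity of the cotangent on the other. If one wished to go further and extract the rate, one could combine the upper bound $\pi\cot\pi c \le 1/c$ (valid on $(0,1)$) with the explicit asymptotic $\sum_{i=1}^m 1/(c_m+i)\sim \log m$ to get $c_m = O(1/\log m)$, but this sharper statement is not required here.
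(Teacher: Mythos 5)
Your argument is correct and is essentially the paper's own proof: both use the identity $\pi\cot\pi c_{m}=\sum_{i=1}^{m}\frac{1}{c_{m}+i}$, note that the right-hand side diverges to $+\infty$ uniformly for $c\in(0,1)$ (you make this explicit via the harmonic-number bound $H_{m+1}-1$), and conclude from the behavior of the cotangent that $c_{m}\rightarrow0$. No issues.
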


\begin{proof}
As $m\rightarrow\infty$ the right side of (\ref{=ceq}) approaches $\infty$ for
all $c$ in the interval $\left(  0,1\right)  $. Thus we must have $\pi
\cot\left(  \pi c_{m}\right)  \rightarrow\infty$ as well, which implies
$c_{m}\rightarrow0.$
\end{proof}

It seems likely that as $m\rightarrow\infty$ the region collapses to the union
of the unit intervals on the coordinate axes. However this requires an extra
convexity argument for the graph.

\providecommand{\bysame}{\leavevmode\hbox to3em{\hrulefill}\thinspace}
\providecommand{\MR}{\relax\ifhmode\unskip\space\fi MR }
\providecommand{\MRhref}[2]{  \href{http://www.ams.org/mathscinet-getitem?mr=#1}{#2}
} \providecommand{\href}[2]{#2}

\end{document}